\def\S{{\mathbb {S}}}
\def\R{{\mathbb {R}}}
\def\N{{\mathbb {N}}}
\def\F{{\mathcal {F}}}
\def\A{{\mathcal {A}}}
\newcommand{\p}{\partial}
\def\eps{{\varepsilon}}
\newtheorem{teo}{Theorem}[section]
\newtheorem{lema}[teo]{Lemma}
\newtheorem{prop}[teo]{Proposition}
\theoremstyle{remark}
\theoremstyle{definition}
\newtheorem{defi}[teo]{Definition}
\numberwithin{equation}{section}
\begin{document}

\title[Equations involving the $p(x)$-Laplacian with critical exponent in $\R^N$]
{Local existence conditions for an equations involving the $p(x)$-Laplacian with critical exponent in $\R^N$.}

\author[N. Saintier and A. Silva]{Nicolas Saintier and Analia Silva}

\address[N. Saintier]{Departamento de Matem\'atica, FCEyN - Universidad de Buenos Aires, Ciudad Universitaria, Pabell\'on I  (1428) Buenos Aires, Argentina.}

\address[A. Silva]{Instituto de Matem\'atica Aplicada San Luis, IMASL. Universidad Nacional de San Luis and CONICET. Ejercito de los Andes 950.
                   D5700HHW San Luis. Argentina.}

\email[A. Silva]{asilva@dm.uba.ar, acsilva@unsl.edu.ar}

\email[N. Saintier]{nsaintie@dm.uba.ar}

\urladdr[N. Saintier]{http://mate.dm.uba.ar/~nsaintie}

\subjclass[2000]{46E35,35B33}

\keywords{Sobolev embedding, variable exponents, critical exponents, concentration compactness}

\begin{abstract}
The purpose of this paper is to formulate sufficient existence conditions for a critical  equation involving the $p(x)$-Laplacian of the form \eqref{MainEqu} below posed in $\R^N$. 
This equation is critical in the sense that the source term has the form $K(x)|u|^{q(x)-2}u$ with an exponent $q$ that can be equal to the critical exponent $p^*$ at some points of $\R^N$ including at infinity. 
The sufficient existence condition we find are local in the sense that they depend only on the behaviour of the exponents $p$ and $q$ near these points. 
We stress that we do not assume any symmetry or periodicity of the coefficients of the equation and that $K$ is not required to vanish in some sense at infinity like in most existing results. 
The proof of these local existence conditions is based on a notion of localized best Sobolev constant at infinity and 
a refined concentration-compactness at infinity. 
\end{abstract}

\maketitle

In this paper we address the existence problem for the $p(x)$-Laplace operator with a source that has critical growth in the sense of the Sobolev embeddings. To be precise, we consider the equation
\begin{equation}\label{MainEqu}
  -\Delta_{p(x)}u +k(x)|u|^{p(x)-2}u = f(x,u)  \qquad \text{ in }\R^N,
\end{equation}
where 
the $p(x)$-Laplacian operator $-\Delta_{p(x)}$ is defined as usual as $-\Delta_{p(x)}u=-\text{div}(|\nabla u|^{p(x)-2}\nabla u)$
and the source term $f$ has the form $f(x,u)=K(x)|u|^{q(x)-2}u$ for some nonnegative continuous function $K$ that has a  limit 
$K(\infty)$ at infinity. 
The exponents $p,q:\R^N\to [1,+\infty)$ are Log-H\"older continuous functions having a limit $p(\infty)$ and $q(\infty)$ at  infinity and satisfying 
$1<\inf_{\R^N} p\le \sup_{\R^N} p<N $ and $1\le q(x)\le p^*(x):=Np(x)/(N-p(x))$, $x\in\R^N$. 
We will assume that equation \eqref{MainEqu} is critical at infinity in the sense that $q(\infty)=p^*(\infty)$. 
Notice that the critical set $\A:=\{x\in\R^N:\,q(x)=p^*(x) \}$ can be non-empty as well. 
The main purpose of this paper is to find conditions on the coefficients $p,q,k,K$ to obtain the existence of a non-trivial solution to  \eqref{MainEqu} without any periodicity or symmetry assumptions of these coefficients and without 
requiring $K$ to vanish  at infinity.  

\bigskip

The $p(x)$-Laplacian, and more generally variable exponent spaces, have been the subject of an intense research activity
 in the fields of both partial differential equations and harmonic analysis.
This interest comes from the possible applications of these spaces as well as the new variety of phenomenon that appear in comparison to the traditional setting of constant exponent spaces.

From the point of view of applications, the $p(x)$-Laplacian appears in two
main applied problems. The first one deals with fluid mechanic where 
this operator is useful in the modelling of the so-called
electrorheological fluids. These fluids have the peculiarity of
modifying their mechanical properties in presence of external
factors such as electromagnetic field. We refer to M.
Ru{\v{z}}i{\v{c}}ka's book \cite{Ru} where this theory is fully developed.
The second application comes form the field of image processing
where the $p(x)$-Laplacian is used to design image restauration
processes that behave differently according to the smoothness of the
image. This way noise  can be removed from an image preserving the
boundaries. We refer to Y. Chen, S. Levin and R. Rao's paper
\cite{CLR}.

From the points of view of partial differential equations, we
mention as an example one striking features of the variable exponent
setting and refer to \cite{HHLN} and \cite{Radulescu} for general
overviews of recent results concerning PDE involving the
$p(x)$-Laplacian. It is well-known that, in the constant exponent
setting, the immersion of the Sobolev space $W_0^{1,p}(U)$, with $U$ a
smooth bounded domain, into $L^{Np/(N-p)}(U)$ is never compact.
However in the variable setting  the immersion of
$W^{1,p(\cdot)}(U)$ into $L^{q(\cdot)}(U)$ can be compact even if
the critical  set $\{x\in \bar U: \, q(x)=p^*(x)\}$ is non-empty
provided  that  this set is "`small"' and we have a control on how
the exponent $q$ reaches $p^*$ (see \cite{MOSS}). Moreover it was recently proved in \cite{FBSS1} that
there exists an extremal for the embedding of $W_0^{1,p(\cdot)}(U)$
into $L^{q(\cdot)}(U)$ with $q(\cdot)\le p^*(\cdot)$, if $q$ is subcritical in a sufficiently big
ball.

\bigskip

In the constant exponent setting, equations like \eqref{MainEqu}  in a bounded domain with critical exponent, or in an unbounded domain with subcritical or critical exponent, have been widely studied since the seminal paper by Aubin \cite{Aubin} and Brezis-Nirenberg  \cite{BN} who dealt with equations involving the Laplacian operator. 
The initial motivation for the study of this kind of equations comes from their appearance in differential geometry (the so-called Yamab\'e problem). 
Aubin's and Brezis-Nirenberg's results and methods have been extended in various directions to deal with critical equations 
involving  the $1$-Laplacian, $p$-Laplacian, the bilaplacian, ... with various boundary conditions, either in domains of $\R^N$ or on Riemannian manifolds.

In contrast  there are relatively few results in the variable exponent setting.
Concerning the subcritical case, we mention the papers \cite{FanHan}, \cite{Alves2}, \cite{Fan2},  \cite{Fan3}, \cite{FuZhang},  \cite{AF2}.
For the critical case, we refer to \cite{Also}, \cite{Alves3}, \cite {Fu2}, \cite{FuZhang2}, \cite{LiangZhang}, \cite{AF1},
\cite{AF3}.
 The authors of these papers prove existence and multiplicity of  solutions for equation similar to  \eqref{MainEqu}.
The major difficulty in proving the existence of solutions is the
possibility of a loss of mass at infinity and, in the case of
critical problem, the possibility of concentration, either around
some points or at infinity. The authors of the aformentionned papers
circumvent this problem usually assuming one or various of the
following conditions: (i) some symmetry on the coefficients of the
equation (in most cases the coefficients are supposed radial) since
it is well-known since Strauss' paper \cite{Strauss} (see also
\cite{Lions2})  that the presence of symmetry improves Sobolev
embeddings, (ii) some periodicity condition since the periodicity
virtually reduces the problem to a problem set in an bounded domain, 
(iii) perturbing $f$ adding some subcritical term, (iv) requiring
that $K$ vanishes at infinity in the sense that $K$ belongs to some
$L^{r(\cdot)}(\R^N)$ space or that $\lim_{|x|\to +\infty}K(x)=0$.

\bigskip

To deal with the concentration phenomenon, the famous
concentration-compactness principle (CCP) due to Lions \cite{Lions},
originally formulated for critical problems in bounded domain and
later extended to deal with critical problem in unbounded domains by
Chabrowski \cite{Chabrowski}, is of prime importance since it
describes the concentration by a weighted sum of Dirac masses and
the loss of mass by measures "`supported at infinity"`. The CCP
allows to  formulate existence conditions for critical equations
that are local in the sense that they rely only on the behaviour of
the coefficients of the equation near the possible concentration
points.

The CCP  has recently been extended to the variable exponent setting in \cite{FBS1} (and independently in \cite{Fu})  for a bounded domain,
and in \cite{Fu2} and \cite{FuZhang2} for an unbounded domain.
However in these versions, the Sobolev constant used in the statement to compare the weights of the Dirac masses is a global Sobolev constant which does not reflects the local behaviour of the exponents $p$ and $q$ near the concentrations points. A refined version of the CCP for bounded domain was then later proved in \cite{FBSS1} introducing the notion of localized Sobolev constant. This notion was then used in \cite{FBSS2} to formulate existence conditions for an equation like
\ref{MainEqu} in a bounded domain relying only  on the local behaviour near concentration points.
We also mention the papers \cite{FBSS3} and \cite{FBSS4} where the same program as been carried for equations on a bounded domain with nonlinear boundary conditions.
A systematic and self-contained treatment of critical Sobolev immersion and critical equations in bounded domain can be found in the book \cite{FBSS5}. 

\bigskip

Our  purpose in this paper is twofold. First we want to prove a
refined CCP at infinity in the spirit of \cite{Chabrowski} together
with a notion of localized Sobolev constant at infinity so as to
capture the behaviour of $p$ and $q$ at infinity. We then want to
apply this CCP to provide sufficient local existence conditions for
the equation \ref{MainEqu}  without any of the assumptions usually
done. In particular we will not require any periodicity or symmetry
of the coefficients nor will we require  the function $K$ to vanish
in some sense at infinity. Instead we will see that, analogously to
what we did in \cite{FBSS1}, \cite{FBSS2}, \cite{FBSS3},
\cite{FBSS4}, if $K$ does not vanish at infinity,  then, to prevent
the loss of mass or concentration at infinity, it is enough to
require that $p$ and $q$ have a local minimum, resp. maximum, at
infinity, and that the hessian matrix of $p$ or $q$ at infinity (see
definition \ref{DefiC2Infinity} below) is non-zero.

\bigskip

The paper is organized as follows. In the first section we collect
some basic definitions and results concerning the variable exponent
Lebesgue and Sobolev spaces that will be needed in this paper. We
state the main assumptions on the exponents $p$ and $q$ in the second section. We
then introduce the notion of localized best Sobolev constant at
infinity in the next section and study its relation with the usual
Sobolev constant. As an application we prove in the fourth section a
refined version of the concentration-compactness principle at
infinity that we apply in the next section to obtain a sufficient
condition of existence for equation \ref{MainEqu}. We collect in 
appendix A a result concerning a compact embedding and in Appendix B
the lengthy test-function computations  used to prove the
local existence conditions.

\section{Preliminaries on variable exponent spaces.}

In this section we review some preliminary results regarding Lebesgue and Sobolev spaces with variable exponents that 
will be used throughout this paper. 
All  these results and a comprehensive study of these spaces with an exhaustive bibliography can be found in \cite{libro}.

\subsection{Lebesgue spaces}

Consider a measurable function $p:\R^N \to [1,+\infty]$. The
variable exponent Lebesgue space $L^{p(\cdot)}(\R^N)$ is defined by
$$ L^{p(\cdot)}(\R^N) = \Big\{u\in L^1_{\text{loc}}(\R^N) \colon \int_{\R^N} |u(x)|^{p(x)}\,dx<\infty\Big\}. $$
This space is endowed with the norm
$$ \|u\|_{L^{p(\cdot)}(\R^N)}=\inf\Big\{\lambda>0:\int_ {\R^N} \Big|\frac{u(x)}{\lambda}\Big|^{p(x)}\,dx\leq 1\Big\}. $$
The following H\"older inequality holds (see e.g. \cite{libro}[lemma
3.2.20]):

\begin{prop} Let $p,q,s:\R^N\to [1,+\infty] $ be measurable functions such that
$$ \frac{1}{s(x)} = \frac{1}{p(x)} + \frac{1}{q(x)} \qquad \text{for a.e. $x\in\R^N$.} $$
Let $f\in L^{p(\cdot)}(\R^N)$ and $g\in L^{q(\cdot)}(\R^N)$. Then
$fg\in L^{s(\cdot)}(\R^N)$ with
\begin{equation}\label{HolderIneq}
  \|fg\|_{L^{s(\cdot)}(\R^N)}   \le \Big(\Big(\frac{s}{p}\Big)^++\Big(\frac{s}{q}\Big)^+\Big)
    \|f\|_{L^{p(\cdot)}(\R^N)}     \|g\|_{L^{q(\cdot)}(\R^N)}. 
\end{equation}
\end{prop}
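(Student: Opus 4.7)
The plan is to reduce the inequality to a pointwise estimate on suitably normalized integrands and then invoke a variable-exponent form of Young's inequality. Since the three Luxemburg norms involved are positively homogeneous, and both sides vanish when $f$ or $g$ is zero a.e., I can assume $\|f\|_{L^{p(\cdot)}(\R^N)}$ and $\|g\|_{L^{q(\cdot)}(\R^N)}$ are strictly positive and work with $F = f/\|f\|_{L^{p(\cdot)}(\R^N)}$ and $G = g/\|g\|_{L^{q(\cdot)}(\R^N)}$. By the unit ball property of the Luxemburg norm, these normalized functions satisfy the modular bounds $\int_{\R^N}|F|^{p(x)}\,dx\le 1$ and $\int_{\R^N}|G|^{q(x)}\,dx\le 1$.

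The heart of the argument is pointwise. The hypothesis $1/s(x)=1/p(x)+1/q(x)$ means that $\alpha(x):=p(x)/s(x)$ and $\beta(x):=q(x)/s(x)$ are conjugate H\"older exponents at every $x$. Applying the scalar Young inequality $ab\le a^{\alpha(x)}/\alpha(x)+b^{\beta(x)}/\beta(x)$ with $a=|F(x)|^{s(x)}$ and $b=|G(x)|^{s(x)}$ yields
$$|F(x)G(x)|^{s(x)} \le \frac{s(x)}{p(x)}|F(x)|^{p(x)} + \frac{s(x)}{q(x)}|G(x)|^{q(x)}.$$
Integrating, bounding the coefficients by their essential suprema, and using the modular bounds on $F$ and $G$ gives the modular estimate $\int_{\R^N}|FG|^{s(x)}\,dx\le C$, where $C:=(s/p)^+ + (s/q)^+$.

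It remains to convert the modular bound into a norm bound. Since $s(x)/p(x)+s(x)/q(x)=1$ at every $x$ one has $C\ge 1$, and since $s(x)\ge 1$ by hypothesis, $C^{-s(x)}\le C^{-1}$ a.e., so
$$\int_{\R^N}\Big|\frac{FG}{C}\Big|^{s(x)}\,dx \le C^{-1}\int_{\R^N}|FG|^{s(x)}\,dx \le 1.$$
The definition of the Luxemburg norm then gives $\|FG\|_{L^{s(\cdot)}(\R^N)}\le C$, and unnormalizing recovers \eqref{HolderIneq}. The one technical nuisance I expect is the set where $p(x)$ or $q(x)$ equals $+\infty$, which the statement allows: on that set the modular is replaced by an essential-supremum condition and Young's inequality degenerates. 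This forces a case split on the measurable sets $\{p=\infty\}$, $\{q=\infty\}$ and their complement, using $|F|\le 1$ a.e.\ on $\{p=\infty\}$ to recover the needed pointwise estimate before patching the three pieces together; this is the main (though still routine) obstacle to a clean proof.
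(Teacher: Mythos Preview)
The paper does not supply its own proof of this proposition: it is stated as a preliminary fact with a citation to \cite{libro}[Lemma 3.2.20]. Your argument is the standard one (pointwise Young's inequality with exponents $p(x)/s(x)$ and $q(x)/s(x)$, followed by the modular-to-norm conversion using $C\ge 1$ and $s\ge 1$) and is correct; there is nothing in the paper to compare it against beyond the reference.

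One small remark on the caveat you raise: for the purposes of this paper the exponents $p$, $q$ are always bounded (see assumption (H3) and Proposition~\ref{norma.y.rho}, which is stated only for bounded $p$), so the $p=\infty$ case-split you anticipate is never actually needed here. Your core argument already covers everything the paper requires.
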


It is usually convenient to study the so-called modular
$\rho(u):=\int_{\R^N} |u|^{p(x)}\,dx$ instead of the norm
$\|u\|_{L^{p(\cdot)}(\R^N)} $. The following result provide some relations
between the two (see \cite{libro}[Chap 2-1]):

\begin{prop}\label{norma.y.rho}
Assume that $p$ is bounded. For $u\in L^{p(\cdot)}(\R^N)$ and
$\{u_k\}_{k\in\N}\subset L^{p(\cdot)}(\R^N)$, we have
\begin{align}
& u\neq 0 \Rightarrow \Big(\|u\|_{L^{p(\cdot)}(\R^N)} = \lambda \Leftrightarrow \rho(\frac{u}{\lambda})=1\Big).\nonumber \\
& \|u\|_{L^{p(\cdot)}(\R^N)}<1 (=1; >1) \Leftrightarrow \rho(u)<1(=1;>1).\nonumber  \\
& \|u\|_{L^{p(\cdot)}(\R^N)}>1 \Rightarrow \|u\|^{p^-}_{L^{p(\cdot)}(\R^N)} \leq \rho(u) 
\leq \|u\|^{p^+}_{L^{p(\cdot)}(\R^N)}.\nonumber \\
& \|u\|_{L^{p(\cdot)}(\R^N)}<1 \Rightarrow \|u\|^{p^+}_{L^{p(\cdot)}(\R^N)} 
\leq \rho(u) \leq \|u\|^{p^-}_{L^{p(\cdot)}(\R^N)}.\nonumber \\
& \lim_{k\to\infty}\|u_k\|_{L^{p(\cdot)}(\R^N)} = 0 \Leftrightarrow \lim_{k\to\infty}\rho(u_k)=0.\nonumber \\
& \lim_{k\to\infty}\|u_k\|_{L^{p(\cdot)}(\R^N)} = \infty
\Leftrightarrow \lim_{k\to\infty}\rho(u_k) = \infty.\nonumber
\end{align}
\end{prop}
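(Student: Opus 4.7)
The plan is to reduce every statement to the analysis of the single real-valued function
$\varphi_u(\lambda):=\rho(u/\lambda)=\int_{\R^N}|u(x)|^{p(x)}\lambda^{-p(x)}\,dx$
defined for $\lambda>0$ when $u\in L^{p(\cdot)}(\R^N)\setminus\{0\}$. First I would check that $\varphi_u$ is continuous and strictly decreasing on $(0,\infty)$, with $\lim_{\lambda\to\infty}\varphi_u(\lambda)=0$ and $\lim_{\lambda\to 0^+}\varphi_u(\lambda)=+\infty$. Continuity follows from dominated convergence, using that $p$ is bounded (hence $\lambda^{-p(x)}$ is controlled on compact subsets of $(0,\infty)$ by $\max(\lambda_0^{-p^+},\lambda_0^{-p^-})$ near any $\lambda_0$) and that $|u|^{p(\cdot)}\in L^1$ by definition of $L^{p(\cdot)}$. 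Strict monotonicity is immediate from $\lambda^{-p(x)}$ being strictly decreasing in $\lambda$. The limits follow from monotone convergence (for $\lambda\to 0^+$, one needs $p^->0$, which is trivial, and the assumption $u\neq 0$ on a set of positive measure; for $\lambda\to\infty$ one uses $\lambda^{-p(x)}\to 0$ and dominated convergence with the integrable majorant $|u|^{p(x)}$ for $\lambda\ge 1$).

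From these properties, there is a unique $\lambda_0>0$ with $\varphi_u(\lambda_0)=1$. Then the definition $\|u\|_{L^{p(\cdot)}}=\inf\{\lambda>0:\varphi_u(\lambda)\le 1\}$ together with continuity and monotonicity of $\varphi_u$ forces $\|u\|_{L^{p(\cdot)}}=\lambda_0$, giving the first statement (including the extension $\|u\|=0\Leftrightarrow u=0$). The second statement is then a direct consequence: by monotonicity $\varphi_u(1)=\rho(u)$ is less than, equal to, or greater than $1$ precisely when the crossing point $\lambda_0=\|u\|$ lies on the corresponding side of $1$.

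For the power inequalities, I would plug $\lambda=\|u\|_{L^{p(\cdot)}}$ into the identity $\rho(u/\lambda)=1$ and bound $\lambda^{-p(x)}$ using $p^-\le p(x)\le p^+$. When $\lambda>1$ one has $\lambda^{-p^+}\le\lambda^{-p(x)}\le\lambda^{-p^-}$, so
$$\lambda^{-p^+}\rho(u)\le 1\le \lambda^{-p^-}\rho(u),$$
which rearranges to $\|u\|^{p^-}\le\rho(u)\le\|u\|^{p^+}$. When $\lambda<1$ the two inequalities on $\lambda^{-p(x)}$ reverse, yielding the symmetric conclusion $\|u\|^{p^+}\le\rho(u)\le\|u\|^{p^-}$.

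The last two statements (equivalence of norm and modular convergence to $0$ and to $\infty$) now follow by a sandwich. If $\|u_k\|\to 0$, then eventually $\|u_k\|<1$, so $\rho(u_k)\le\|u_k\|^{p^-}\to 0$; conversely, if $\rho(u_k)\to 0$ then eventually $\rho(u_k)<1$, hence (by statement 2) $\|u_k\|<1$ and $\|u_k\|^{p^+}\le\rho(u_k)\to 0$. The ``to infinity'' case is identical with the roles of $p^-$ and $p^+$ swapped in the $\|u_k\|>1$ regime. The main technical point in the whole argument is the very first step: verifying continuity and the boundary limits of $\varphi_u$; once those are in hand everything else is purely algebraic. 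The boundedness hypothesis on $p$ is used precisely to obtain uniform control on $\lambda^{-p(x)}$ needed for dominated convergence.
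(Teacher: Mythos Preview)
Your argument is correct and is in fact the standard proof of these basic modular--norm relations. Note, however, that the paper does not give its own proof of this proposition: it is stated in the preliminaries section with a reference to \cite{libro}, Chapter~2.1, so there is nothing to compare against beyond observing that your approach is the usual one found in that reference.
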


Assume now that $1<p^- \le p^+ <\infty$, where $p^-:=ess-\inf_{\R^N}
p$ and $p^+:=ess-\sup_{\R^N} p$. Then $L^{p(\cdot)}({\R^N})$ is a
separable and reflexive Banach space, and the smooth functions with compact
support are dense in $L^{p(\cdot)}(\R^N)$.
%If moreover  $p$ is Log-H\"older continuous in the sense that
%$$ |p(x)-p(y)| \le \frac{C}{|\log|x-y||},\quad \text{for } x,y\in \R^N,\ x\neq y  $$
%for some constant $C>0$, then 

\subsection{Sobolev spaces}

Consider a measurable function $p:\R^N \to [1,+\infty]$. The
variable exponent Sobolev space $W^{1,p(\cdot)}(\R^N)$ is defined by
$$ W^{1,p(\cdot)}(\R^N) = \{u\in W^{1,1}_{\text{loc}}(\R^N) \colon u\in L^{p(\cdot)}(\R^N) \mbox{ and } |\nabla u |\in  L^{p(\cdot)}(\R^N)\}. $$
The corresponding norm for this space is
$$ \|u\|_{W^{1,p(\cdot)}(\R^N)}=\|u\|_{L^{p(\cdot)}(\R^N)}+\| \nabla u \|_{L^{p(\cdot)}(\R^N)}. $$
As with the variable Lebesgue spaces, if $1<p^- \le p^+ <\infty$
then $W^{1,p(\cdot)}(\R^N)$ is a separable and reflexive Banach space and 
 the smooth functions  with compact support are dense in $W^{1,p(\cdot)}(\R^N)$.

As usual, we denote the conjugate exponent of $p(.)$ by $p'(x) = p(x)/(p(x)-1)$  and the Sobolev exponent by
$$ p^*(x)=\begin{cases} \frac{Np(x)}{N-p(x)} & \mbox{ if } p(x)<N,\\ \infty & \mbox{ if } p(x)\geq N. \end{cases} $$
The standard Sobolev embedding theorem and Poincar\'e inequality still hold provided that $p$ satisfies a regularity condition called Log-H\"older continuity (see \cite{libro}[Chap. 4-1]) defined by the following condition: there exists $C>0$ such that 
$$ |p(x)-p(y)|\le \frac{C}{\ln(e+1/|x-y|)},\qquad x,y\in\R^N, \, x\neq y.  $$
% and there exist a real number $p(\infty) $ and a constant $C_2>0$ such that 
%$$ |p(x)-p(\infty)|\le \frac{C_2}{\ln(e+|x|)},\qquad x\in\R^N.$$  

\begin{prop}
Let $p:\R^N\to [1,N)$ be a Log-H\"older continuous function and $q:\R^N\to [1,+\infty)$ be a  measurable function 
 such that $q\le p^*$. Then, for any bounded smooth domain $U\subset \R^N$, $\|u\|:=\|\nabla u\|_{L^{p(\cdot)}(U)}$ is a 
norm in $W_0^{1,p(\cdot)}(U)$ equivalent to the usual norm. Moreover there is a continuous embedding from
$ W^{1,p(\cdot)}(U)$ into $L^{q(\cdot)}(U)$, which is also compact if  $q$ is strictly subcritical in 
the sense that $\inf_U p^*-q>0$. 
\end{prop}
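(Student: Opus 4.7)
The plan is to reduce the statement to standard facts about variable exponent spaces, most of which are treated in detail in \cite{libro}. The argument splits into four steps: (i) the Poincar\'e inequality, (ii) the Sobolev embedding into $L^{p^*(\cdot)}(U)$, (iii) the monotonicity embedding $L^{p^*(\cdot)}(U)\hookrightarrow L^{q(\cdot)}(U)$ on bounded domains, and (iv) compactness under strict subcriticality.

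For (i), I would work first on $C_c^\infty(U)$. Extending $u$ by zero to a large ball $B$ containing $U$ and integrating along lines parallel to a coordinate axis gives the pointwise bound $|u(x)|\le \int_{-R}^{R} |\partial_1 u(t,x_2,\dots,x_N)|\,dt$. Applying the variable exponent H\"older inequality \eqref{HolderIneq} with exponents $p(\cdot)$ and $p'(\cdot)$ to this integral yields a pointwise estimate, and then integrating the $p(x)$-power and using Proposition \ref{norma.y.rho} turns a modular bound into a norm bound $\|u\|_{L^{p(\cdot)}(U)}\le C(U)\|\nabla u\|_{L^{p(\cdot)}(U)}$. Density of $C_c^\infty(U)$ extends the estimate to $W_0^{1,p(\cdot)}(U)$, proving the equivalence of norms.

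For (ii)--(iii), the Log-H\"older continuity of $p$ is the crucial hypothesis that allows the variable-exponent Sobolev--Gagliardo--Nirenberg theorem from \cite{libro} to give a continuous embedding $W^{1,p(\cdot)}(U)\hookrightarrow L^{p^*(\cdot)}(U)$. To pass from $L^{p^*(\cdot)}(U)$ to $L^{q(\cdot)}(U)$, I would apply the variable H\"older inequality with the exponents $r(x)=p^*(x)/q(x)\ge 1$ and its conjugate to the pair $|u|^{q(x)}$ and $1$, then use $|U|<\infty$ to bound $\|1\|_{L^{r'(\cdot)}(U)}$ by a constant; this is where boundedness of $U$ enters in a decisive way. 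Composing with (ii) yields the continuous embedding $W^{1,p(\cdot)}(U)\hookrightarrow L^{q(\cdot)}(U)$.

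The main obstacle is (iv), the compactness under $\inf_U(p^*-q)>0$. My plan is the classical interpolation argument. First, since $p^+<\infty$ and $U$ is bounded, $W^{1,p(\cdot)}(U)\hookrightarrow W^{1,p^-}(U)$ continuously, so the constant-exponent Rellich--Kondrachov theorem gives the compact embedding $W^{1,p(\cdot)}(U)\hookrightarrow\hookrightarrow L^{p^-}(U)$. Given a bounded sequence $u_n$ in $W^{1,p(\cdot)}(U)$, it is bounded in $L^{p^*(\cdot)}(U)$ by (ii) and a subsequence converges in $L^{p^-}(U)$, hence in measure. To upgrade convergence in measure to convergence in $L^{q(\cdot)}(U)$, choose $\theta(x)\in(0,1)$ with $\theta$ bounded away from $1$ (using $\inf_U(p^*-q)>0$) such that $q(x)\theta(x)\le p^-$ and $q(x)(1-\theta(x))/(1-\theta(x))\cdot\text{(scale)}\le p^*(x)$; concretely, write $|u_n-u_m|^{q(x)}=|u_n-u_m|^{q(x)\theta(x)}|u_n-u_m|^{q(x)(1-\theta(x))}$ and apply the variable H\"older inequality. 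The first factor goes to zero in the appropriate Lebesgue norm by compactness in $L^{p^-}$, and the second factor stays bounded because of the uniform bound in $L^{p^*(\cdot)}(U)$. This gives a Cauchy sequence in $L^{q(\cdot)}(U)$ and hence compactness.
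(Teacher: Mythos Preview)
The paper does not give its own proof of this proposition: it is quoted as a standard preliminary fact with an implicit reference to \cite{libro}, so there is nothing to compare against. Your plan to reduce everything to known building blocks is reasonable, and steps (ii)--(iii) are exactly how one proceeds.

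There is, however, a genuine gap in step (i). The one--dimensional line--integration argument you sketch for Poincar\'e does not go through for variable exponents. After the pointwise bound $|u(x)|\le \int_{-R}^{R}|\partial_1 u(t,x')|\,dt$ and Jensen you obtain
\[
\int_U |u(x)|^{p(x)}\,dx \le C\int_U\int_{-R}^{R} |\partial_1 u(t,x')|^{p(x_1,x')}\,dt\,dx,
\]
but the exponent on the right depends on $x_1$, not on the integration variable $t$; swapping the order of integration therefore does not recover $\int_U|\nabla u|^{p(x)}\,dx$. This is not a matter of missing details: the $L^{p(\cdot)}$ norm simply does not enjoy a Fubini--type iterated structure, which is precisely why Poincar\'e in this setting is usually obtained via the Log--H\"older hypothesis (through boundedness of the maximal operator or the Riesz potential, as in \cite{libro}). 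The quickest fix within your outline is to reorder: prove (ii) first (which already uses Log--H\"older), then apply (iii) with $q=p$ to get $\|u\|_{L^{p(\cdot)}(U)}\le C\|u\|_{L^{p^*(\cdot)}(U)}\le C'\|\nabla u\|_{L^{p(\cdot)}(U)}$, which is (i).

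For (iv), your idea is correct, but the displayed interpolation formula is garbled and, as written, the choice of $\theta(x)$ is not justified (you also need $q(x)\ge p^-$, which is not assumed). A cleaner execution of the same thought is to use equi--integrability: by variable H\"older with exponents $p^*(\cdot)/q(\cdot)$ and its conjugate $s(\cdot)=p^*(\cdot)/(p^*(\cdot)-q(\cdot))$, one has $\int_E|u_n|^{q(x)}\,dx\le C\|1\|_{L^{s(\cdot)}(E)}$, and since $\inf_U(p^*-q)>0$ the exponent $s$ is bounded, so the right--hand side tends to $0$ with $|E|$. Combined with a.e.\ convergence of a subsequence (from the compact embedding into $L^{p^-}(U)$) this yields modular convergence in $L^{q(\cdot)}(U)$ by Vitali, and hence norm convergence via Proposition~\ref{norma.y.rho}.
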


\section{Assumptions on the exponents $p$ and $q$}

In this paper, the exponents $p$ and $q$ will always be measurable functions $p,q:\R^N\to [1,+\infty]$ satisfying the following assumptions: 

\begin{enumerate}
\item[(H1)] $p$ and $q$ have a modulus of continuity $\rho$ in the sense that for any $x,h\in\R^N $,
$$ p(x+h)=p(x)+\rho(h),\qquad \text{and} \qquad q(x+h)=q(x)+\rho(h) $$ 
with $\lim_{h\to 0}\rho(h)\ln\,|h|=0$.
\end{enumerate}
\begin{enumerate}
\item[(H2)] there exist real numbers $p(\infty)$ and $q(\infty)$ such that
\begin{equation}\label{Decay}
  \lim_{|x|\to +\infty} |p(x)-p(\infty)|\ln\,|x|=0 \qquad \text{and}\qquad
        \lim_{|x|\to +\infty} |q(x)-q(\infty)|\ln\,|x|=0
\end{equation}
\end{enumerate}
%Notice that Log-H\"older continuity at infinity is a classic assumption in the context of harmonic analysis for variable exponent spaces in $\R^N$ (see e.g. \cite{libro}[Chap. 4.1]).
\begin{enumerate}
\item[(H3)] denoting $p^-:=\inf_{\R^N} p$, $p^+:=\sup_{\R^N} p$, we assume that
$$ 1<p^-\le p^+<N \qquad \text{and}\qquad 1\le q\le p^*:=\frac{Np}{N-p}. $$
We denote $\A:=\{x\in\R^N:\, q(x)=p^*(x)\}$ the critical set which can be empty or not. \\
We  assume that $\infty$ is critical in the sense that  $q(\infty)=p^*(\infty)$.
\end{enumerate}

Assumption (H1) is slightly stronger than the usual Log-H\"older continuity assumption, which is natural to assume to have 
the Sobolev embeddings theorems. We need this stronger version to be able to perform some test-function computation. 
Assumption (H2) can be thought in the same way as (H1) as a stronger version of a Log-H\"older continuity at infinity. 
Log-H\"older continuity at infinity (also called Log-H\"older decay condition) is a classical assumption when dealing for instance with the maximal function in $\R^N$ (see \cite{libro}[Chap.4]). 
Assumption (H3)  says that we are assuming that the infinity is critical. We stress that the critical set $\A$ can contain other  points that $\infty$ where $q=p^*$. The case where infinity is subcritical, in the sense that $ q(\infty)<p^*(\infty)$, will be treated elsewhere.

\section{Localized best Sobolev constant at infinity}

Given a smooth open subset $U\subset \R$ we denote by $S(p(\cdot),q(\cdot),U)$ the best Sobolev constant for the embedding of
$W_0^{1,p(\cdot)}(U)$ into $L^{q(\cdot)} (U)$, namely
$$ S(p(\cdot),q(\cdot),U) := \inf_{u\in W_0^{1,p(\cdot)}(U),\, u\neq 0} 
\frac{\|\nabla u\|_{L^{p(\cdot)}(U)}}{\|u\|_{L^{q(\cdot)}(U)}}. $$
Let $x_0\in\A$ be a critical point i.e. $q(x_0)=p^*(x_0)$. 
Taking in the previous definition a  ball  $U=B_{x_0}(\eps)$ centered at $x_0$ with small radius $\eps>0$, 
and noticing that $S(p(\cdot),q(\cdot),B_{x_0}(\eps))$ is non-decreasing in $\eps$,
we can consider the best localized Sobolev constant $\bar S_{x_0}$ at $x_0$ defined by
\begin{equation}\label{DefLocCste}
  S_{x_0} = \lim_{\eps\to 0} S(p(\cdot),q(\cdot),B_{x_0}(\eps)) = \sup_{\eps>0}S(p(\cdot),q(\cdot),B_{x_0}(\eps)). 
\end{equation}
This notion was introduced in \cite{FBSS1} to study precisely the concentration phenomenon at $x_0$ and obtain a refined concentration-compactness principle.

To study the Sobolev embedding at infinity, we consider  in an analogous way, the best Sobolev constant
$S_R=S(p(\cdot),q(\cdot),\R^N\setminus B_R) $ in $\R^N\backslash B_R$, where we denote $B_R:=B_0(R)$, $R>0$.
Noticing that $S_R$ is non-decreasing in $R$, we define the localized best Sobolev constant $S_\infty$  at infinity taking the limit of $S_R$ as $R\to +\infty$:

\begin{defi}
The localized best Sobolev constant $S_\infty$  at infinity is defined as
\begin{equation}\label{DefSinf}
 S_\infty = \lim_{R\to +\infty} S(p(\cdot),q(\cdot),\R^N\setminus B_R) = 
  \sup_{R>0} S(p(\cdot),q(\cdot),\R^N\setminus B_R).
\end{equation}
\end{defi}

Since the localized best Sobolev constant $ S_{x_0}$ depends only on the behaviour of $p$ and $q$ near a critical point $x_0$,
it is natural to try to compare it with the usual best Sobolev constant $K(N,r)$, $r\in [1,N)$, corresponding to the embedding of the constant-exponent space
$D^{1,r}(\R^N)$, the closure of $C^\infty_c(\R^N)$ for the norm $\|\nabla u\|_{L^r(\R^N)}$, into $L^{Nr/(N-r)}(\R^N)$, namely
\begin{equation}\label{BestSobCste}
 K(N,r)^{-r} := \inf_{u\in C^\infty_c(\R^N),\, u\neq 0} \frac{\displaystyle \int_{\R^N} |\nabla u|^r\,dx }
                 {\displaystyle \Big(\int_{\R^N} |u|^\frac{Nr}{N-r}\,dx\Big)^\frac{N-r}{N} }.
\end{equation}
It was proved in \cite{FBSS1} that $S_{x_0}\le K(N,p(x_0))^{-1}$ with equality if $p$ has a local minimum at $x_0$ and $q$ a local maximum.
The next two results show that these two properties still holds for $S_\infty$.

\begin{prop}\label{PropIneq}
Assume  that $\infty$ is critical in the sense that $q(\infty)=p(\infty)^*$. Then
\begin{equation}\label{CotaLocSob}
 S_\infty \leq K(N,p(\infty))^{-1},
\end{equation}
where $K(N,p(\infty))$ is defined in \eqref{BestSobCste} (with $r=p(\infty)$) and $S_\infty$ in \eqref{DefSinf}.
\end{prop}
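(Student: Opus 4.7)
The plan is to translate a near-extremal for the constant-exponent Sobolev inequality at exponent $p(\infty)$ far from the origin, and to use assumption (H2) to force the variable-exponent Rayleigh quotient of the translated function to converge to the constant-exponent one. Since $S_\infty = \sup_{R>0} S_R$, it is enough to show that for every $R>0$ and every $\eta>0$ there is an admissible test function $w \in W_0^{1,p(\cdot)}(\R^N\setminus B_R)$ whose Rayleigh quotient does not exceed $K(N,p(\infty))^{-1} + 2\eta$.

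Fix $\eta>0$ and pick $u \in C_c^\infty(\R^N)$, $u\not\equiv 0$, normalized so that $\int_{\R^N} |\nabla u|^{p(\infty)}\,dz = 1$ and $\beta := \|u\|_{L^{p^*(\infty)}(\R^N)}$ satisfies $1/\beta \le K(N,p(\infty))^{-1} + \eta$; this is possible by the very definition of $K(N,p(\infty))$ as an infimum over $C_c^\infty(\R^N)$. Let $K_0 := \mathrm{supp}(u)$ and define the translate $w_y(x) := u(x-y)$, which lies in $C_c^\infty(\R^N\setminus B_R)$ as soon as $|y|$ is large enough. The change of variables $z = x-y$ gives
$$ \int_{\R^N} |\nabla w_y|^{p(x)}\,dx = \int_{K_0} |\nabla u(z)|^{p(y+z)}\,dz, \qquad \int_{\R^N} |w_y/\beta|^{q(x)}\,dx = \int_{K_0} |u(z)/\beta|^{q(y+z)}\,dz. $$
Because $K_0$ is bounded, assumption (H2) ensures $p(y+z) \to p(\infty)$ and $q(y+z) \to q(\infty)=p^*(\infty)$ uniformly in $z \in K_0$ as $|y|\to\infty$. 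The integrands are uniformly bounded on $K_0$ (using boundedness of $p$ and $q$ together with $\|\nabla u\|_\infty, \|u\|_\infty<\infty$), so dominated convergence yields
$$ \int_{\R^N} |\nabla w_y|^{p(x)}\,dx \xrightarrow[|y|\to\infty]{} 1, \qquad \int_{\R^N} |w_y/\beta|^{q(x)}\,dx \xrightarrow[|y|\to\infty]{} 1. $$
Proposition \ref{norma.y.rho} then promotes these modular limits to the norm convergences $\|\nabla w_y\|_{L^{p(\cdot)}(\R^N)} \to 1$ and $\|w_y\|_{L^{q(\cdot)}(\R^N)} \to \beta$; by the compact support of $w_y$ these norms coincide with the corresponding norms on $\R^N\setminus B_R$.

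Consequently, for $|y|$ large enough (depending on $R$ and $\eta$), the Rayleigh quotient of $w_y$ on $\R^N\setminus B_R$ is at most $1/\beta + \eta \le K(N,p(\infty))^{-1} + 2\eta$, so $S_R \le K(N,p(\infty))^{-1} + 2\eta$ for every $R>0$; letting $\eta \to 0$ yields \eqref{CotaLocSob}. The only delicate step is the passage from modular to norm convergence, which is exactly the content of Proposition \ref{norma.y.rho}; the dominated convergence is elementary thanks to the compact support of $u$, and (H2) is used only in its simplest qualitative form (pointwise limits at infinity for $p$ and $q$), so the finer Log-H\"older decay built into (H2) plays no role here.
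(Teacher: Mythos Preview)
Your proof is correct and follows the same core strategy as the paper: push a smooth compactly supported near-extremal for the constant-exponent Sobolev quotient out to infinity and show that its variable-exponent Rayleigh quotient converges to the constant-exponent one. The paper, however, uses the rescaled family $\phi_\lambda(x)=\lambda^{-N/p^*(x_\lambda)}\phi\big((x-x_\lambda)/\lambda\big)$ with $\lambda\to 0$ and $|x_\lambda|\ge 1/\lambda$; the dilation introduces factors of the form $\lambda^{N(1-q(x_\lambda+\lambda y)/p^*(x_\lambda))}$, and forcing these to converge to $1$ is precisely where the Log-H\"older decay rates in (H1)--(H2) enter. By using a pure translation $w_y(x)=u(x-y)$ with no dilation, you avoid these factors entirely and need only the qualitative limits $p(x)\to p(\infty)$, $q(x)\to q(\infty)$, as you correctly note. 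This is a genuine simplification for the present proposition; the rescaling in the paper's argument is not needed here, though it does become essential later (e.g.\ in Proposition~\ref{SinftyPositive}, where one takes $\lambda\to+\infty$ to show $S_\infty=0$ in the subcritical case, something a pure translation cannot achieve). Your appeal to Proposition~\ref{norma.y.rho} for the modular-to-norm passage is also legitimate: the two-sided bounds there squeeze $\|\cdot\|$ between powers of $\rho(\cdot)$, so $\rho\to 1$ forces $\|\cdot\|\to 1$.
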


\begin{proof}
Given $\phi\in C^\infty_c(\R^N)$, $\phi\not\equiv 0$, we consider the rescaled function $\phi_\lambda$  defined for $\lambda>0$ small by
\begin{equation}\label{rescaled}
\phi_\lambda(x) =\lambda^\frac{-N}{p^*(x_\lambda)}\phi\Big(\frac{x-x_\lambda}{\lambda}\Big),
\end{equation}
where the points $x_\lambda\in \R^N$ are such that $|x_\lambda|\ge 1/\lambda$.
It then follows from  assumptions (H1) and (H2) that
\begin{equation}\label{cont}
 q(x_\lambda + \lambda y)=q(\infty)+\eps_\lambda^1(y), \qquad
p(x_\lambda + \lambda y)=p(\infty)+\eps_\lambda^2(y),
\end{equation}
and
\begin{equation}\label{contbis}
 p(x_\lambda + \lambda y)=p(x_\lambda)+\eps_\lambda^3(y),
\end{equation}
where the functions $\eps_\lambda^i$, $i=1,2,2$, satisfy
\begin{equation}\label{cont2}
 |\eps_\lambda^i(y)|\le \eps_\lambda \qquad \text{with} \qquad
\lim_{\lambda\to 0} \eps_\lambda\ln\,\lambda=0
\end{equation}
uniformly for  $y$ in a compact set.

Fix some $R>0$. Since $\lim_{\lambda\to +\infty}|x_\lambda|=+\infty$, we have that  $supp \,\phi_\lambda\subset \R^N\backslash B_R$ for $\lambda>0$ small enough.
It follows that
\begin{equation}\label{Eq1}
 S(p(\cdot),q(\cdot),\R^N\setminus B_R) \le \liminf_{\lambda\to 0}\frac{\|\nabla \phi_\lambda\|_{p(\cdot)}}{\|\phi_\lambda\|_{q(\cdot)}}.
\end{equation}
We now estimate each terms in the right hand side of this
inequality. First, in view of (\ref{cont}),
$$ \int_{\R^N} |\phi_\lambda(x)|^{q(x)}\,dx
 =  \displaystyle\int_{\R^N} \lambda^{N\Big(1-\frac{ (q(\infty)+\eps_\lambda^1(y)) }
 { (p(\infty)+\eps^2_\lambda(y))^* }\Big)}
|\phi(y)|^{q(\infty)+\eps_\lambda^1(y)}\,dy.
$$
Recalling that $\phi$ has compact support and that $q(\infty)=p(\infty)^*$, we deduce using  \eqref{cont2} that
$$ \lim_{\lambda\to +\infty}  \int_{\R^N} |\phi_\lambda(x)|^{q(x)}\,dx
=\int_{\R^N} |\phi|^{q(\infty)} \,dx. $$
According to the definition of the $\|\cdot\|_{q(\cdot)}$ norm we can then write
$$  1 = \int_{\R^N }   \Big(\frac{\phi_\lambda(x)}{\|\phi_\lambda\|_{q(\cdot)}}\Big)^{q(x)}\,dx
= (1+o(1)) \|\phi_\lambda\|_{q(\cdot)}^{-q(\infty)+o(1)}  \int_{\R^N} |\phi|^{q(\infty)}\,dx.
$$
We then deduce that
\begin{equation}\label{Eq2}
 \lim_{\lambda\to 0}  \|\phi_\lambda\|_{q(\cdot)} = \|\phi\|_{q(\infty)}.
\end{equation}

We treat the gradient term analogously. First, as $\lambda\to 0$,
\begin{eqnarray*}
 \int_{\R^N} |\nabla \phi_\lambda(x)|^{p(x)}\,dx
= \int_{\R^N} \lambda^{N\Big( 1-\frac{p(x_\lambda+\lambda y)}{p^*(x_\lambda)}\Big)- p(x_\lambda+\lambda y)} |\nabla\phi|^{p(x_\lambda)+\lambda y}\,dy
 \to \int_{\R^N} |\nabla\phi|^{p(\infty)}\,dy.
\end{eqnarray*}
Then, writing
$\int_{\R^N}\Big(\frac{|\nabla\phi_\lambda(x)|}{\|\nabla\phi_\lambda\|_{p(\cdot)}}\Big)^{p(x)}\,dx
=1$  as above, we deduce that
\begin{equation}\label{Eq3}
 \lim_{\lambda\to 0} \|\nabla \phi_\lambda\|_{p(\cdot)}=\|\nabla \phi\|_{p(\infty)}.
\end{equation}

Coming back to (\ref{Eq1}), we thus deduce in view of \eqref{Eq2} and \eqref{Eq3} that when $q(\infty)=p(\infty)^*$, there holds that
$$ S(p(\cdot),q(\cdot),\R^N \backslash B_R)\leq\frac{\|\nabla\phi\|_{p(\infty), \R^N}}{\|\phi\|_{p(\infty)^*, \R^N}} $$
for every $\phi\in C^\infty_c(\R^N)$ and any $R>0$. It follows that for any $R>0$,
$$ S(p(\cdot),q(\cdot),\R^N \backslash B_R)\leq K^{-1}(N,p(\infty)). $$
Taking the limit $R\to +\infty$ yields \eqref{CotaLocSob}.

\end{proof}

The next result gives a sufficient condition on $p$ and $q$ to have
the equality in \eqref{CotaLocSob}. The proof follows the line of
\cite{FBSS1} with an additional difficulty. Indeed the proof there
ultimately relies on H\"older inequality in a ball $B_{x_0}(\eps)$.
Here the analogous of this ball is the ball centered at infinity
$\R^N\backslash B_R$ which has infinite volume. To overcome this
difficulty, we adapt a trick apparently due to Nekvinda
\cite{Nekvinda} which amounts to say that, for a given
 constant $c\in (0,1)$ and a variable exponent $\eps(x)$, an integral like $\int_{\R^N\backslash B_R} c^{\eps(x)}\,dx $ can be made
arbitrarily small provided that $\eps$ converge fast enough to $+\infty$ at infinity.

\begin{teo}\label{PropIgaldad}
If $\infty$ is a local maximum of $q$ and a local minimum of $p$ in the sense that
\begin{equation}\label{HypEgal}
 p(\cdot)\ge p(\infty) \qquad \text{ and } \qquad q(\cdot)\le q(\infty)
\end{equation}
for $|x|$ large, where $p(\infty)$ and $q(\infty)$ are given in assumption (H2),
then, if $q(\infty)=p^*(\infty)$,
\begin{equation}\label{Igualdad}
S_\infty = K(N,p(\infty))^{-1},
\end{equation}
where $K(N,p(\infty))$ is defined in \eqref{BestSobCste} and $S_\infty$ in \eqref{DefSinf}.
\end{teo}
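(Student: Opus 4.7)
Proposition \ref{PropIneq} already gives the upper bound $S_\infty\le K(N,p(\infty))^{-1}$, so my plan is to establish only the matching lower bound. The strategy is, for $u$ supported in $\R^N\setminus B_R$ with $R$ large, to dominate the variable-exponent norms $\|u\|_{L^{q(\cdot)}}$ and $\|\nabla u\|_{L^{p(\cdot)}}$ by the constant-exponent $L^{q(\infty)}$ and $L^{p(\infty)}$ norms via the H\"older inequality \eqref{HolderIneq}, to invoke the classical Euclidean Sobolev inequality $\|v\|_{L^{p(\infty)^*}(\R^N)}\le K(N,p(\infty))\|\nabla v\|_{L^{p(\infty)}(\R^N)}$, and to show that the correction factors produced in these comparisons tend to $1$ as $R\to\infty$. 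Hypothesis \eqref{HypEgal} guarantees that the dual exponents arising in the H\"older step have the correct sign, while the stronger Log-H\"older decay (H2) supplies the quantitative decay needed to control the corrections.

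Concretely I fix $R$ so large that \eqref{HypEgal} holds on $\R^N\setminus B_R$, take $u\in W_0^{1,p(\cdot)}(\R^N\setminus B_R)\setminus\{0\}$, and introduce the variable dual exponents
$$\frac{1}{r(x)}=\frac{1}{q(x)}-\frac{1}{q(\infty)}\ge 0,\qquad \frac{1}{t(x)}=\frac{1}{p(\infty)}-\frac{1}{p(x)}\ge 0,$$
with the convention $r(x)=+\infty$ (resp. $t(x)=+\infty$) where equality holds. Applying \eqref{HolderIneq} with $g\equiv 1$ yields
\begin{align*}
\|u\|_{L^{q(\cdot)}(\R^N\setminus B_R)} &\le C^{(1)}_R\,\|u\|_{L^{q(\infty)}(\R^N\setminus B_R)}\,\|1\|_{L^{r(\cdot)}(\R^N\setminus B_R)},\\
\|\nabla u\|_{L^{p(\infty)}(\R^N\setminus B_R)} &\le C^{(2)}_R\,\|\nabla u\|_{L^{p(\cdot)}(\R^N\setminus B_R)}\,\|1\|_{L^{t(\cdot)}(\R^N\setminus B_R)}.
\end{align*}
Read off from \eqref{HolderIneq}, $C^{(1)}_R=\sup_{|x|\ge R}\frac{q(x)}{q(\infty)}+\sup_{|x|\ge R}\bigl(1-\frac{q(x)}{q(\infty)}\bigr)$, and an analogous expression holds for $C^{(2)}_R$; both tend to $1$ as $R\to\infty$ by (H2).

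The delicate ingredient, which I expect to be the main obstacle, is the Nekvinda-type estimate
$$\lim_{R\to\infty}\|1\|_{L^{r(\cdot)}(\R^N\setminus B_R)}=1=\lim_{R\to\infty}\|1\|_{L^{t(\cdot)}(\R^N\setminus B_R)}.$$
For the first, I note $r(x)\ge q^- q(\infty)/(q(\infty)-q(x))$, and (H2) gives $(q(\infty)-q(x))\ln|x|\le \delta$ for $|x|$ large, for any preassigned $\delta>0$; hence $r(x)\ge c_\delta\ln|x|$ on $\R^N\setminus B_{R_\delta}$ with $c_\delta\to+\infty$ as $\delta\to 0$. Given any $\lambda>1$, I choose $\delta$ so small that $c_\delta\ln\lambda>N$, which yields
$$\int_{\R^N\setminus B_R}\lambda^{-r(x)}\,dx\le \int_{\R^N\setminus B_R}|x|^{-c_\delta\ln\lambda}\,dx\xrightarrow[R\to\infty]{}0.$$
For $R$ large the integral is $\le 1$, which by the Luxemburg norm definition gives $\|1\|_{L^{r(\cdot)}(\R^N\setminus B_R)}\le\lambda$. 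Sending $\lambda\downarrow 1$ proves the claim; the estimate for $t(\cdot)$ is identical, using (H2) for $p$.

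Chaining the two H\"older bounds with the classical Sobolev inequality applied to $u$ extended by zero, and using $q(\infty)=p(\infty)^*$, gives
$$\|u\|_{L^{q(\cdot)}(\R^N\setminus B_R)}\le K(N,p(\infty))\,C^{(1)}_R C^{(2)}_R\,\|1\|_{L^{r(\cdot)}}\|1\|_{L^{t(\cdot)}}\,\|\nabla u\|_{L^{p(\cdot)}(\R^N\setminus B_R)}.$$
Taking the infimum over $u$ and then letting $R\to\infty$, the right-hand side converges to $K(N,p(\infty))^{-1}$, so $S_\infty\ge K(N,p(\infty))^{-1}$. Combined with Proposition \ref{PropIneq}, this proves \eqref{Igualdad}.
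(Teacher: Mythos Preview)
Your proof is correct and uses essentially the same mechanism as the paper: compare the variable-exponent norms to the constant-exponent ones via H\"older's inequality and control the extra factor by a Nekvinda-type estimate driven by the logarithmic decay in (H2). Your presentation is in fact more direct than the paper's: you work on $\R^N\setminus B_R$ throughout and invoke the classical Sobolev inequality on $\R^N$, whereas the paper first rescales to the fixed domain $\R^N\setminus B_1$ (reducing to $S(p_R,q_R,\R^N\setminus B_1)\to S(p(\infty),q(\infty),\R^N\setminus B_1)$) and introduces an auxiliary constant $c\in(0,1)$ to estimate $\|c\|_{L^{s_R(\cdot)}}\le 1$ rather than $\|1\|_{L^{r(\cdot)}}\le\lambda$; these extra steps are not needed, and your direct use of the Luxemburg definition bypasses them cleanly. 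One minor remark: you only prove (and only need) $\limsup_{R\to\infty}\|1\|_{L^{r(\cdot)}}\le 1$, not the full equality you state.
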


\begin{proof} Notice that the result is trivial if $p$ is constant in $\R^N\backslash B_R$ for some $R>0$.
We thus assume from now on that the set $\{x\in \R^N\backslash B_R\text{ s.t. } p(x)\neq p(\infty)\}$ has positive measure for any $R>0$ big.

In view of \eqref{CotaLocSob}, we only have to prove that
\begin{equation}\label{ToBeProvedEgal}
S_\infty \ge K^{-1}(N,p(\infty)).
\end{equation}

As a first step, we claim that for any $R>0$ and $u\in C^\infty_c(\R^N\backslash B_R)$,
\begin{equation}\label{CambioVariable1}
 \|u\|_{L^{q(\cdot)}(\R^n\backslash B_R)} = R^\frac{N}{q(\infty)}(1+o(1))\|u_R\|_{L^{q_R(\cdot)}(\R^N\backslash B_1)}
\end{equation}
and
\begin{equation}\label{CambioVariable2}
\|\nabla u\|_{L^{p(\cdot)}(\R^n\backslash B_R)} = R^\frac{N}{p(\infty)^*}(1+o(1))\|\nabla u_R\|_{L^{p_R(\cdot)}(\R^N\backslash B_1)},
\end{equation}
where $u_R(x):=u(Rx)$, $p_R(x):=p(Rx)$, $q_R(x):=q(Rx)$, and $o(1)$ does not depend on $u$ and goes to $0$ as $R\to \infty$ uniformly in $y\in \R^N\backslash B_R$.
We only prove \eqref{CambioVariable1} since the proof of  \eqref{CambioVariable2} is similar.
We have
\begin{eqnarray*}
 \|u\|_{L^{q(\cdot)}(\R^n\backslash B_R)}
   &=& \inf\,\Big\{ \lambda>0 \text{ s.t. } \int_{\R^n\backslash B_R} \Big|\frac{u(x)}{\lambda}\Big|^{q(x)}\,dx \le 1 \Big\} \\
     &=& \inf\,\Big\{ \lambda>0 \text{ s.t. } \int_{\R^n\backslash B_1} \Big|\frac{u_R(x)}{\lambda R^{-N/q_R(y)}}\Big|^{q_R(y)}\,dy\le 1\Big\}
\end{eqnarray*}
with
$$ R^{-\frac{N}{q_R(y)} + \frac{N}{q(\infty)}} = \text{exp}\Big\{ \frac{N}{q_R(y)q(\infty)}\Big( q(Ry)-q(\infty)\Big) \ln\,R\Big\}
$$
which goes to $1$ as $R\to\infty$ uniformly in $y\in \R^N\backslash B_R$ in view of \eqref{Decay} and recalling that $q$ is bounded.
It follows that
\begin{eqnarray*}
 \|u\|_{L^{q(\cdot)}(\R^n\backslash B_R)}
&=& \inf\,\Big\{ \lambda>0 \text{ s.t. } \int_{\R^n\backslash B_1} \Big|\frac{u_R(x)}{\lambda R^{-N/q(\infty)}(1+o(1))}\Big|^{q_R(y)}\,dy\le 1\Big\},
\end{eqnarray*}
from which we deduce \eqref{CambioVariable1}.

It follows from \eqref{CambioVariable1}, \eqref{CambioVariable2} and the fact that the map
$u\in C^\infty_c(\R^N\backslash B_R)\to u_R\in  C^\infty_c(\R^N\backslash B_1)$ is biyective, that when $q(\infty)=p(\infty)^*$,
$$ S(p(\cdot),q(\cdot),\R^N\backslash B_R) = (1+o(1))S(p_R(\cdot),q_R(\cdot),\R^N\backslash B_1) $$
where $o(1)\to 0$ as $R\to \infty$. To conclude the proof, it thus suffices to prove that
$$ \liminf_{R\to \infty}S(p_R(\cdot),q_R(\cdot),\R^N\backslash B_1)\ge S(p(\infty),q(\infty),\R^N\backslash B_1).  $$
This will hold if we can prove that for any $u\in  C^\infty_c(\R^N\backslash B_1)$,
\begin{equation}\label{Holder1}
 \|\nabla u\|_{L^{p(\infty)}(\R^N\backslash B_1)} \le (1+o(1)) \|\nabla u\|_{L^{p_R(\cdot)}(\R^N\backslash B_1)}
\end{equation}
and
\begin{equation}\label{Holder2}
 \|u\|_{L^{q(\infty)}(\R^N\backslash B_1)} \ge (1+o(1)) \|u\|_{L^{q_R(\cdot)}(\R^N\backslash B_1)}.
\end{equation}
We prove \eqref{Holder1} (the proof of \eqref{Holder2} is similar). Fix some $c\in (0,1)$.
Using H\"older inequality \eqref{HolderIneq} and the hypothesis that $\infty$ is a local minimum of $p$,
we have
\begin{eqnarray*}
 \|\nabla u\|_{L^{p(\infty)}(\R^N\backslash B_1)}
&=& c^{-1} \|c\nabla u\|_{L^{p(\infty)}(\R^N\backslash B_1)}  \\
&\le &  c^{-1}\Big(\Big(\frac{p(\infty)}{p_R}\Big)^++\Big(\frac{p(\infty)}{s_R}\Big)^+\Big)
          \|\nabla u\|_{L^{p_R(\cdot)}(\R^N\backslash B_1)} \|c\|_{L^{s_R(\cdot)}(\R^N\backslash B_1)},
\end{eqnarray*}
where $s_R$ is defined by $\frac{1}{s_R} = \frac{1}{p(\infty)} - \frac{1}{p_R}$ and
$\Big(\frac{p(\infty)}{p_R}\Big)^+ = \sup_{\R^N\backslash B_1}\frac{p(\infty)}{p_R}$.
It follows from (H2) that
\begin{eqnarray*}
 \|\nabla u\|_{L^{p(\infty)}(\R^N\backslash B_1)} \le c^{-1}(1+o(1))  \|\nabla u\|_{L^{p_R(\cdot)}(\R^N\backslash B_1)} \|c\|_{L^{s_R(\cdot)}(\R^N\backslash B_1)}.
\end{eqnarray*}
We now estimate $\|c\|_{L^{s_R(\cdot)}(\R^N\backslash B_1)}$. We write $c$ in the form $c=e^{-\gamma}$, $\gamma>0$. Then
\begin{eqnarray*}
 \int_{\R^N\backslash B_1} c^{s_R(x)}\,dx = \int_{\R^N\backslash B_1} \text{exp}\Big( -\gamma \frac{p(\infty)p_R(x)}{p_R(x)-p(\infty)}\Big) \,dx
 \le  \int_{\R^N\backslash B_1} \text{exp}\Big( - \frac{\gamma}{p_R(x)-p(\infty)}\Big) \,dx.
\end{eqnarray*}
The last inequality follows from the fact that $p_R\ge p(\infty) \ge 1$.
Fix some $\delta>0$ small such that $\frac{\gamma}{\delta}>N$.
Then for $R$ big (depending on $\delta$) we have in view of assumption \eqref{Decay} that
$$ 0\le p_R(x)-p(\infty)\le \frac{\delta}{\ln\,R|x|} \qquad \text{for } |x|\ge 1.$$
Then
$$ \int_{\R^N\backslash B_1} c^{s_R(x)}\,dx \le \int_{\R^N\backslash B_1} \Big(R|x|\Big)^{-\frac{\gamma}{\delta}}\,dx
 = R^{-\frac{\gamma}{\delta}}\omega_{N-1}\int_1^{+\infty} r^{N-1-\frac{\gamma}{\delta}}\,dr
= \frac{R^{-\frac{\gamma}{\delta}}\omega_{N-1}}{\frac{\gamma}{\delta}-N}
$$
which goes to $0$ as $R\to \infty$.
Recall that we assume that the set  $\{x\in \R^N\backslash B_1\text{ s.t. } s_R(x)\neq \infty)\}$ has positive measure for any $R>0$ big.
It follows that $ \int_{\R^N\backslash B_1} c^{s_R(x)}\,dx>0$ so that in view of \cite{libro}[Lemma 3.2.5],
$$ \|c\|_{L^{s_R(\cdot)}(\R^N\backslash B_1)}
\le \max \Big\{ \Big(\int_{\R^N\backslash B_1} c^{s_R(x)}\,dx \Big)^\frac{1}{s_R^-};
                      \Big(\int_{\R^N\backslash B_1} c^{s_R(x)}\,dx \Big)^\frac{1}{s_R^+} \Big\}. $$
As $ \int_{\R^N\backslash B_1} c^{s_R(x)}\,dx \to 0$ as $R\to \infty$ and
$\frac{1}{s_R^+}=\Big(\frac{1}{s_R}\Big)^- = \inf_{\R^N\backslash B_1} \frac{1}{p(\infty)} - \frac{1}{p_R} = 0$, we obtain
$ \|c\|_{L^{s_R(\cdot)}(\R^N\backslash B_1)}\le 1$.
In conclusion, for any $c\in (0,1)$ we have for $R$ big enough depending on $c$ only that
\begin{eqnarray*}
 \|\nabla u\|_{L^{p(\infty)}(\R^N\backslash B_1)} \le c^{-1}(1+o(1))  \|\nabla u\|_{L^{p_R(\cdot)}(\R^N\backslash B_1)}.
\end{eqnarray*}
\eqref{Holder1} follows.
\end{proof}

Observe that if, in the proof of prop \ref{PropIneq}, we take $\lambda\to +\infty$ with $|x_\lambda|\gg \lambda$, we can easily show that $S_\infty=0$ if $\infty$ is subcritical (in the sense that $q(\infty)<p(\infty)^*$). 
The details are given in the proof of the next proposition below. 
Conversely it seems natural to expect that $S_\infty>0$ if $\infty$ is critical. The proof happens to be non-trivial.
To see the difficulty, notice that the proof of this fact in the constant exponent case goes as follows: we assume that $K(n,p)^{-1}=0$ so that there exists a sequence $(u_k)_k$ of compactly supported smooth function such that 
$\int |\nabla u_k|^p\,dx \to 0$ and $\int |u_k|^{p*}\,dx =1$. Using the invariance by translation and dilatation of the norm, we can assume that each $u_k$ is supported in the unit ball $B$ and then use the embedding $W^{1,p}_0(B)\hookrightarrow L^{p^*}(B)$ to obtain a contradiction.

Since the invariance by translation and dilatation is lost in the variable exponent setting, this scheme of proof does not work anymore. Instead we will translate the problem to the sphere using the stereographic projection.

In the course of the proof we will need a Hardy inequality which is essentially a particular case of a general result proved in \cite{FMS}:

\begin{prop}
For any $v\in D^{1,p(\cdot)}(\R^N)$, $|x|^{-1}v\in
L^{p(\cdot)}(\R^N)$, and there exists a constant $C>0$ independent
of $v$ such that
\begin{equation}\label{Hardy}
 \||x|^{-1}v\|_{L^{p(\cdot)}(\R^N)}\le C\|\nabla v\|_{L^{p(\cdot)}(\R^N)}.
\end{equation}
\end{prop}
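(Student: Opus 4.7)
The strategy is to reduce the statement to the general variable-exponent Hardy inequality of \cite{FMS}, once its hypotheses are checked in our framework.

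First, I would argue by density: since $D^{1,p(\cdot)}(\R^N)$ is the completion of $C_c^\infty(\R^N)$ under the seminorm $v\mapsto\|\nabla v\|_{L^{p(\cdot)}(\R^N)}$, it suffices to prove \eqref{Hardy} for $v\in C_c^\infty(\R^N)$. Once the uniform estimate is established on this dense subset, the full statement follows by extending the bounded linear map $v\mapsto|x|^{-1}v$ continuously to all of $D^{1,p(\cdot)}(\R^N)$, which also shows a posteriori that $|x|^{-1}v\in L^{p(\cdot)}(\R^N)$ for any such $v$.

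Next, I would verify that our standing assumptions imply the hypotheses of \cite{FMS}. The Hardy inequality for variable exponents proved there requires $p$ to be log-H\"older continuous both locally and at infinity, together with $1<p^-\le p^+<N$. Assumption (H1), which imposes a modulus of continuity $\rho$ satisfying $\rho(h)\ln|h|\to 0$, is strictly stronger than local log-H\"older continuity; (H2) is precisely the log-H\"older decay condition at infinity with the required rate; and (H3) delivers the pointwise bound on $p$. With these checks in place, the inequality \eqref{Hardy} follows directly from the main theorem of \cite{FMS}.

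The main obstacle in this reduction is the careful matching of hypotheses: \cite{FMS} may phrase its assumptions in a slightly different form (for instance via a Muckenhoupt-type condition on the weight $|x|^{-p(x)}$, or via boundedness of a suitable maximal operator on $L^{p(\cdot)}(\R^N)$), and it must be verified that (H1)--(H3) imply their hypotheses. Should a self-contained argument be preferred, a direct proof is available: one integrates the identity $\nabla\cdot(x|x|^{-p(\infty)})=(N-p(\infty))|x|^{-p(\infty)}$ against $|v|^{p(x)}$, integrates by parts, and applies H\"older's inequality \eqref{HolderIneq}; the error coming from replacing $|x|^{-p(x)}$ by $|x|^{-p(\infty)}$ is controlled through (H1) on compact sets and through $|p(x)-p(\infty)|\ln|x|\to 0$ (from (H2)) on the complement, in the spirit of the trick borrowed from Nekvinda used already in the proof of Theorem \ref{PropIgaldad}.
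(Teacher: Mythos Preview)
Your overall strategy---reduce to a result from \cite{FMS}---matches the paper's, but the execution differs in an important way. You treat \cite{FMS} as containing a ready-made variable-exponent Hardy inequality that can be applied directly once (H1)--(H3) are verified, and you yourself flag the hypothesis-matching as the main obstacle. The paper is more concrete: it does not invoke a Hardy inequality from \cite{FMS} but rather a weighted bound on the Riesz potential, namely \cite{FMS}[Thm~4.1], which gives
\[
\||x|^{-1}I_1 u\|_{L^{p(\cdot)}(\R^N)} \le C\|u\|_{L^{p(\cdot)}(\R^N)},
\qquad I_1 u(x)=\int_{\R^N}\frac{u(y)}{|x-y|^{N-1}}\,dy.
\]
The bridge to \eqref{Hardy} is then the classical pointwise representation $|v|\le C(N)\,I_1(|\nabla v|)$ for $v\in C_c^\infty(\R^N)$ (e.g.\ \cite{GT}[Lemma~7.14]); applying the weighted Riesz bound with $u=|\nabla v|$ immediately yields the Hardy inequality. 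This two-step route avoids the hypothesis-matching problem you raised, since the fractional-integral result is exactly what \cite{FMS} proves.

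Your alternative direct argument via integration by parts against $\nabla\cdot(x|x|^{-p(\infty)})$ is a genuinely different route. In the constant-exponent case it is standard, but in the variable-exponent setting the step where you differentiate $|v|^{p(x)}$ produces an extra term involving $(\ln|v|)\nabla p$, and controlling that term together with the $|x|^{-p(x)}$ versus $|x|^{-p(\infty)}$ discrepancy would require substantially more work than you indicate; the paper's route through the Riesz potential sidesteps this entirely.
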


\begin{proof}
Let $I_1u(x)=\int_{\R^N}\frac{u(y)}{|x-y|^{N-1}}\,dy$ be the fractional integral of $u$.
As a particular case of  \cite{FMS}[Thm 4.1], we have the existence of a constant $C>0$ such that
for any $u\in L^{p(\cdot)}(\R^N)$, $|x|^{-1}I_1u\in L^{p(\cdot)}(\R^N)$ with
$$ \||x|^{-1}I_1u\|_{L^{p(\cdot)}(\R^N)}\le C\|u\|_{L^{p(\cdot)}(\R^N)}. $$
Taking $u=|\nabla v|$ with $v\in D^{1,p(\cdot)}(\R^N)$, we obtain
$$ \||x|^{-1}I_1(|\nabla v|)\|_{L^{p(\cdot)}(\R^N)}\le C\|\nabla v\|_{L^{p(\cdot)}(\R^N)}. $$
Independently, it is well-known (see e.g. \cite{GT}[Lemma 7.14]) that
for any $v\in C^\infty_c(\R^N)$, $|v|\le C(N)I_1(|\nabla v|)$ with a constant $C(N)$ depending only on $N$. The result easily follows.
\end{proof}

Our result is the following:

\begin{prop}\label{SinftyPositive}
 There holds that   $S_\infty>0$ if and only if $q(\infty)=p(\infty)^*$.
\end{prop}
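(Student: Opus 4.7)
The plan is to prove the two implications separately. The direction $S_\infty>0\Rightarrow q(\infty)=p^*(\infty)$ I would obtain by contraposition, adapting the rescaling argument used to prove \eqref{CotaLocSob}. The converse $q(\infty)=p^*(\infty)\Rightarrow S_\infty>0$ is the genuine obstacle: as noted in the discussion preceding the statement, translation/dilation invariance is lost in the variable exponent setting, so I follow the suggestion there and transport the problem at infinity to a bounded domain on $\S^N$ via the stereographic projection.

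For the easy direction, assume $q(\infty)<p^*(\infty)$ and fix $\phi\in C_c^\infty(\R^N)$, $\phi\not\equiv 0$. I would take $\phi_\lambda$ defined as in \eqref{rescaled}, but now with $\lambda\to+\infty$ and $|x_\lambda|/\lambda\to+\infty$, so that $\mathrm{supp}\,\phi_\lambda\subset\R^N\setminus B_R$ for any fixed $R$ eventually. Repeating the change-of-variable computation \eqref{cont}--\eqref{cont2} shows that $\int_{\R^N}|\phi_\lambda|^{q(x)}\,dx$ behaves like $\lambda^{N(1-q(\infty)/p^*(\infty))}$ up to $o(1)$ in the exponent, hence tends to $+\infty$ since the exponent is strictly positive under subcriticality, while the analogous computation for the gradient produces an exponent of $\lambda$ that vanishes in the limit, giving $\int_{\R^N}|\nabla\phi_\lambda|^{p(x)}\,dx=O(1)$. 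Proposition \ref{norma.y.rho} then yields $\|\phi_\lambda\|_{q(\cdot)}\to+\infty$ while $\|\nabla\phi_\lambda\|_{p(\cdot)}=O(1)$, so the Rayleigh quotient tends to $0$, forcing $S(p(\cdot),q(\cdot),\R^N\setminus B_R)=0$ for every $R$.

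For the harder direction, assume $q(\infty)=p^*(\infty)$ and let $\Phi:\R^N\to\S^N\setminus\{N\}$ be the inverse stereographic projection from the north pole $N$, with conformal factor $\varphi(x)=2/(1+|x|^2)$, so $\Phi^*g_{\S^N}=\varphi^2\,g_{\mathrm{Eucl}}$. For $u\in C_c^\infty(\R^N\setminus B_R)$ with $R$ large, set $\tilde u:=u\circ\Phi^{-1}$, supported in a small geodesic disk $D$ around $N$, and pull back the exponents to $\tilde p,\tilde q$ on $D$, extended continuously at $N$ by $\tilde p(N):=p(\infty)$, $\tilde q(N):=q(\infty)$. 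Assumption (H2) makes the extensions Log-H\"older on $D$, and the criticality at infinity becomes $\tilde q(N)=\tilde p^*(N)$; the standard Sobolev embedding on a bounded smooth domain of $\S^N$ (equivalent via local charts to the Euclidean statement and the theory of \cite{FBSS1}) therefore yields a positive constant $c_0=S(\tilde p(\cdot),\tilde q(\cdot),D)>0$.

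The main obstacle is that the change-of-variable formulas
\[
\int_{\R^N\setminus B_R}|u|^{q(x)}\,dx=\int_D|\tilde u|^{\tilde q}\varphi^{-N}\,d\mathrm{vol}_{\S^N},\qquad\int_{\R^N\setminus B_R}|\nabla u|^{p(x)}\,dx=\int_D|\nabla_{\S^N}\tilde u|^{\tilde p}\varphi^{\tilde p-N}\,d\mathrm{vol}_{\S^N}
\]
introduce weights $\varphi^{-N}$ and $\varphi^{\tilde p-N}$ that are singular at $N$, so $c_0$ does not transfer directly to $\R^N$. To absorb them I would pass to $v:=\varphi^{-(N-p(\infty))/p(\infty)}\tilde u$: the criticality exactly cancels the weights at $y=N$, leaving residual factors of the form $\varphi^{\alpha(y)}$ with $\alpha(N)=0$, which can be controlled uniformly by the Nekvinda-type trick already used in the proof of Theorem \ref{PropIgaldad} (via \eqref{Decay}). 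The chain rule applied to $\tilde u=\varphi^{(N-p(\infty))/p(\infty)}v$ produces, besides the expected main term in $|\nabla_{\S^N}v|^{\tilde p}$, a lower-order term of the form $|v|^{\tilde p}\varphi^{-\tilde p}|\nabla_{\S^N}\varphi|^{\tilde p}$; since $\varphi^{-1}|\nabla_{\S^N}\varphi|$ corresponds to $|x|^{-1}$ in Euclidean coordinates, this term is exactly what the Hardy inequality \eqref{Hardy} bounds. Combining these estimates with $c_0$ yields $\|u\|_{L^{q(\cdot)}(\R^N\setminus B_R)}\le C\|\nabla u\|_{L^{p(\cdot)}(\R^N\setminus B_R)}$ for $R$ large enough, whence $S_\infty>0$.
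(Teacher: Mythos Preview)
Your proposal is essentially the paper's argument: the subcritical direction is identical to the paper's (same rescaling $\phi_\lambda$ with $\lambda\to+\infty$ and $|x_\lambda|\gg\lambda$), and for the critical direction both you and the paper transport the problem to the sphere via the stereographic projection, introduce the conformal weight $\varphi^{-(N-p(\infty))/p(\infty)}$ (the paper's $\phi^\beta$ with $2\beta=-N/q(\infty)$ is exactly this, since $\phi=\varphi^2$), and use the Hardy inequality \eqref{Hardy} to absorb the lower-order term produced by the chain rule.

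Two minor organizational differences are worth noting. First, the paper argues by contradiction (assuming $S_\infty=0$, producing a sequence $u_R$ with $\|\nabla u_R\|_{p(\cdot)}\to 0$, $\|u_R\|_{q(\cdot)}=1$), which lets it work entirely at the modular level and avoids tracking constants in the norm--modular passage; your direct argument needs a bit more care there but is fine. Second, rather than invoking a variable-exponent Sobolev constant on a domain of $\S^N$, the paper performs a \emph{second} stereographic projection from the south pole to bring everything back into the Euclidean ball $B_1$, where the standard Poincar\'e/Sobolev embedding applies; this sidesteps any appeal to manifold-valued variable-exponent theory. Finally, the Nekvinda trick is not needed here: the residual factors $\varphi^{\alpha(y)}$ with $\alpha(N)=0$ tend to $1$ directly by (H2), exactly as in the paper's computation of $\phi^{\beta q(\cdot)+N/2}$.
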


\begin{proof}
Assume that $q(\infty)<p^*(\infty)$.
As in the proof of prop. \ref{PropIneq}, we fix some $\phi\in C^\infty_c(\R^N)$, $\phi\not\equiv 0$, and consider
for $\lambda\to +\infty$  the rescaled functions $\phi_\lambda$ defined by \eqref{rescaled}, where the points $x_\lambda\in \R^N$ are such that
$\lim_{\lambda\to +\infty}\frac{|x_\lambda|}{\lambda}=+\infty$.
It then follows from assumption (H1)-(H2) that for $y$ in a compact set, we can write \eqref{cont}-\eqref{cont2} and, since
$supp\,\phi_\lambda\subset\R^N-B_R$ for $\lambda$ big, also \eqref{Eq1}.
Then as before, we have
\begin{eqnarray*}
\int_{\R^N} |\phi_\lambda(x)|^{q(x)}\,dx
 = \lambda^{N\Big(1-\frac{ (q(\infty)+\eps_\lambda) }{ (p(\infty)+\eps_\lambda)^* }\Big)} \int_{\R^N} |\phi(y)|^{q(\infty)+\eps_\lambda}\,dy.
\end{eqnarray*}
As $\liminf_{\lambda\to +\infty}
1-\frac{q(\infty)+\varepsilon_\lambda}{(p(\infty)+\varepsilon_\lambda)^*}>0$, we have
$\int_{\R^N} |\phi_\lambda(x)|^{q(x)}\,dx\to\infty$.
On the other hand,  as before
$\lim_{\lambda\to +\infty} \int_{\R^N} |\nabla \phi_\lambda(x)|^{p(x)}\,dx
= \int_{\R^N} |\nabla\phi|^{p(\infty)}\,dy. $
We conclude that $S_\infty = 0$.

\medskip

Assume now that $q(\infty)=p(\infty)^*$.
Let $\Phi:S^N\backslash \{P\} \to \R^N$ be the stereographic projection centered at the north pole $P$ of the sphere $S^N$. It is well-known (see e.g. \cite{Lee}[lemma 3.4]) that the pull-back by $\Phi^{-1}$ of
the standard metric $h$ of $S^N$   is the metric conformal to the Euclidean metric $\xi$ given by
$$ (\Phi^{-1})^*h = \phi \xi \qquad \text{with} \qquad \phi(x)=4(1+|x|^2)^{-2}. $$
In particular for any function $u:\S^N\backslash \{P\}\to \R$ there hold
$$\int_{\S^N} u\,dv_h = \int_{\R^N} u\circ\Phi^{-1}\,dv_{(\Phi^{-1})^*h}
= \int_{\R^N} (u\circ\Phi^{-1}) \phi^{N/2} \,dx $$
and
$$ |\nabla u|_h^2 = |\nabla (u\circ \Phi^{-1})|_\xi^2 \circ \Phi. $$

To a function $u:\R^N\to \R$ with compact support, we associate a function
$\hat u: \S^N\to\R$  with compact support in $\S^N-\{P\}$ defined by
\begin{equation}\label{defHatu}
 \hat u = (u\phi^\beta)\circ\Phi \qquad \text{where} \qquad
2\beta := -\frac{N}{q(\infty)} = 1-\frac{N}{p(\infty)}.
\end{equation}
We also define the exponent $\hat p$ and $\hat q$ on $S^n$ by
\begin{equation}\label{defExp}
\hat p = p\circ\Phi \qquad \text{and}\qquad \hat q = q\circ\Phi.
\end{equation}

We first claim that if $u\in L^{q(\cdot)}(\R^N)$ is supported in $\R^N\backslash B_R$ then
\begin{equation}\label{NormLq}
 \int_{S^N} |\hat u|^{\hat q(\cdot)}\,dv_h = (1+o_R(1))\int_{\R^N} |u|^{q(\cdot)}\,dx,
\end{equation}
where $\lim_{R\to +\infty} o_R(1)=0$.
Indeed
$$ \int_{S^N} |\hat u|^{\hat q(\cdot)}\,dv_h
  = \int_{\R^N} |u\phi^\beta|^{q(\cdot)} \, dv_{\phi \xi}
  = \int_{\R^N\backslash B_R} |u|^{q(\cdot)} \phi^{\beta q(\cdot)+N/2}\,dx $$
with $\beta q(x)+N/2=\frac{n}{2q(\infty)}(q(\infty)-q(x))$.
Using (H2), it is easy to see that $\lim_{|x|\to +\infty}\phi(x)^{\beta q(x)+N/2}=1$ which proves the claim.

We now claim that there exists a constant $C>0$ such that for any $u\in C^1(\R^N)$,
\begin{equation}\label{NormGrad}
\int_{\S^N} |\nabla \hat u|_h^{\hat p(\cdot)}\, dv_h
\le C \int_{\R^N} |\nabla u|^{p(\cdot)}\, dx
 + C \int_{\R^N} \frac{|u|^{p(\cdot)}}{|x|^{p(\cdot)}}\,dx.
\end{equation}
To prove this we begin writing that
\begin{eqnarray*}
&& \int_{\S^N} |\nabla \hat u|_h^{\hat p(\cdot)}\, dv_h
 =  \int_{\R^N} |\nabla (u\phi^\beta)|^{p(\cdot)}_{\phi\xi}\, dv_{\phi\xi}
= \int_{\R^N} |\nabla (u\phi^\beta)|^{p(\cdot)}_\xi \phi^\frac{N-p(\cdot)}{2}  \, dx \\
& \le &
C \int_{\R^N} |\nabla u|^{p(\cdot)} \phi^{\beta p(\cdot)+\frac{N-p(\cdot)}{2}}  \, dx
+ C \int_{\R^N} |u|^{p(\cdot)}|\nabla \phi|^{p(\cdot)} \phi^{(\beta-1)p(\cdot)+ \frac{N-p(\cdot)}{2}}  \, dx \\
& \le &
C \int_{\R^N} |\nabla u|^{p(\cdot)} \phi^{\frac{N}{2p(\infty)}(p(\infty)-p(x))}  \, dx
+ C \int_{\R^N} |u|^{p(\cdot)}\Big(\frac{|\nabla \phi|}{\phi}\Big)^{p(\cdot)} \phi^{\frac{N}{2p(\infty)}(p(\infty)-p(x))}  \, dx.
\end{eqnarray*}
As before, using (H2) we see that
$\lim_{|x|\to +\infty}\phi(x)^{\frac{N}{2p(\infty)}(p(\infty)-p(x))} = 1$.
We thus obtain
\begin{eqnarray*}
\int_{\S^N} |\nabla \hat u|_h^{\hat p(\cdot)}\, dv_h
& \le &
 \int_{\R^N} |\nabla u|^{p(\cdot)} \, dx + \int_{\R^n} |u|^{p(\cdot)}\Big(\frac{|\nabla \phi|}{\phi}\Big)^{p(\cdot)} \, dx.
\end{eqnarray*}
Moreover direct computation shows that $\frac{|\nabla \phi|}{\phi} =
\frac{4|x|}{1+|x|^2}\le \frac{C}{|x|}$. The result follows.

\medskip

We can now prove that $S_\infty>0$. Assume on the contrary that $S_\infty=0$. In particular $S_R=0$ for any $R>0$.
We can thus find smooth functions $u_R$, $R>0$, compactly supported in $\R^N\backslash B_R$ such that
\begin{equation}\label{Contradiction}
 \lim_{R\to +\infty}\|\nabla u_R\|_{p(\cdot)} = 0 \qquad \text{and} \qquad
 \|u_R\|_{q(\cdot)} =1.
\end{equation}
Then in view of \eqref{NormLq}  and \eqref{NormGrad},  the functions $\hat u_R$ defined on $\S^N$ from $u_R$ according to \eqref{defHatu} satisfies
\begin{equation}\label{Lqnormzero}
 \int_{S^N} |\hat u_R|^{\hat q}\,dv_h = 1+o_R(1)
\end{equation}
 and
\begin{equation*}
\int_{\S^N} |\nabla \hat u_R|_h^{\hat p(\cdot)}\, dv_h
\le o_R(1) + C \int_{\R^N} \frac{|u_R|^{p(\cdot)}}{|x|^{p(\cdot)}}\,dx
\end{equation*}
Using Hardy inequality \eqref{Hardy}, we see that the integral in
the right hand side is $o_R(1)$ since
\begin{eqnarray*}
 \int_{\R^N} \frac{|u_R|^{p(\cdot)}}{|x|^{p(\cdot)}}\,dx
& \le & \max\,\{\||x|^{-1}u_R\|_{L^{p(\cdot)}(\R^N)}^{p^+},\||x|^{-1}u_R\|_{L^{p(\cdot)}(\R^N)}^{p^-} \}  \\
& \le & C \max\,\{\|\nabla u_R\|_{L^{p(\cdot)}(\R^N)}^{p^+},\|\nabla u_R\|_{L^{p(\cdot)}(\R^N)}^{p^-} \} \\
& = & o_R(1).
\end{eqnarray*}
We thus deduce that
\begin{equation}\label{NormGrad2}
\int_{\S^N} |\nabla \hat u_R|_h^{\hat p(\cdot)}\, dv_h = o_R(1).
\end{equation}
We now obtain a contradiction between \eqref{Lqnormzero} and
\eqref{NormGrad2}. To see this denote by $\Psi:\S^N\backslash
\{S\}\to \R^N$ the stereographic proyection from the south pole $S$,
and consider the functions $\hat u_R\circ \Psi^{-1}$ and the
exponents $\hat p\circ \Psi^{-1}$ and $\hat q\circ \Psi^{-1}$.
Recall that the $\hat u_R$ are supported in ball of $\S^N$ centered
at $P$ whose radius becomes smaller and smaller as $R\to +\infty$.
Since the pullback by $\Psi^{-1}$ of $h$ restricted to these balls
is a metric in $B_1\subset \R^N$ bounded above and below by
$(1+\eps)\xi$ and $(1-\eps)\xi$ for arbitrary small $\eps$, we
obtain as before that
$$ \int_{B_1}|\nabla (\hat u_R\circ \Psi^{-1})|^{\hat p\circ \Psi^{-1}}\,dx = o_R(1), $$
and
$$ \int_{B_1}|\hat u_R\circ \Psi^{-1}|^{\hat q\circ \Psi^{-1}}\,dx  \ge 1+o_R(1).$$
Recalling that $\hat u_R\circ \Psi^{-1}\in C^\infty_c(B_1)$ for $R$ big, this contradicts the Poincare inequality in $B_1$.
\end{proof}

\section{Concentration-compactness principle at infinity}

This section is devoted to the description of the default of compactness of a sequence of functions $(u_n)_n\subset D^{1,p(\cdot)}(\R^N)$,
the closure of $C^\infty_c(\R^N)$ for the norm $\|\nabla u\|_{L^{p(\cdot)}(\R^N)}$.
This will be done by establishing a version of Lions'concentration-compactness principle (CCP) in that setting.

The CCP originally due to P.L. Lions \cite{Lions} was established in the framework of constant-exponent spaces over a bounded domain.
It explains the possible loss of compactness of the $u_n$ by their weak convergence to Dirac masses.
The weights of the Dirac masses are related to the best sobolev constant \eqref{BestSobCste}.
It was later extended by J. Chabrowski \cite{Chabrowski} to unbounded domain, still for constant-exponent spaces,
by introducing measures supported at infinity  related to  the best sobolev constant \eqref{BestSobCste}.
An extension to variable exponent spaces over a bounded domain was done independently in \cite{FBS1} and \cite{Fu}, and
over unbounded domain by \cite{Fu2}.
In those papers the weights of the Dirac masses  are related to the Sobolev constant
$\inf_{u\in C^\infty_c(\R^N)} \frac{\|\nabla u\|_{L^{p(\cdot)}(\R^N)}}{\|u\|_{L^{q(\cdot)}(\R^N)}}$.
Notice that this Sobolev constant is global in the sense that it depends on the behaviour of $p$ and $q$ in all $\R^N$.
This is not satisfactory since a Dirac mass is supported at a single point and thus
the weights should depend only on the behaviour of $p$ and $q$ near that point.
A refined version of the CCP was proven in \cite{FBSS1} where the weights of the Dirac masses are related to the localized Sobolev constant \eqref{DefLocCste}.

The purpose of this section is to state a CCP in $\R^N$ where the weight of the "`Dirac mass at infinity"' is related to the
localized Sobolev constant at infinity \eqref{DefSinf}.

\begin{teo}\label{propCCP}
 Let $(u_n)\subset D^{1,p(\cdot)}(\R^N)$ be a weakly convergent  to $u\in D^{1,p(\cdot)}(\R^N)$.
Then there exist two bounded measures $\mu$ and $\nu$, an at most enumerable set of indices $I$, 
points $x_i\in\A$ (the critical set defined in (H3)),
and positive real numbers $\mu_i,\nu_i$, $i\in I$, such that the following convergence hold weakly in the sense of measures,
\begin{eqnarray}\label{CCP}
 |\nabla u_n|^{p(x)}\,dx & \rightharpoonup & \mu\ge |\nabla u|^{p(x)}\,dx + \sum \mu_i \delta_{x_i}, \label{CCP1} \\
|u_n|^{q(x)}\,dx & \rightharpoonup & \nu:= |u|^{q(x)}\,dx + \sum \nu_i \delta_{x_i}, \label{CCP2}  \\
  S_{x_i}\nu_i^\frac{1}{p(x_i)^*} & \le & \mu_i^\frac{1}{p(x_i)}\qquad \text{for all }i\in I, \label{CCP3}
\end{eqnarray}
where $S_{x_i}$ is the localized Sobolev constant at the point $x_i$ defined in \eqref{DefLocCste}.
Moreover, if we define
$$ \nu_\infty=\lim_{R\to\infty}\limsup_{n\to\infty}\int_{|x|>R}|u_n|^{q(x)}\,dx,  $$
$$ \mu_\infty=\lim_{R\to\infty}\limsup_{n\to\infty}\int_{|x|>R}|\nabla u_n|^{p(x)}\,dx, $$
then
\begin{eqnarray}
 \limsup_{n\to\infty} \int_{\R^N} |\nabla u_n|^{p(x)}\,dx & = & \mu(\R^N) + \mu_\infty, \label{CCPinf1} \\
\limsup_{n\to\infty} \int_{\R^N} |u_n|^{q(x)}\,dx & = & \nu(\R^N) + \nu_\infty, \label{CCPinf2} \\
 S_\infty\nu_\infty^\frac{1}{q(\infty)} & \leq & \mu_\infty^\frac{1}{p(\infty)},  \label{CCPinf3}
\end{eqnarray}
where $S_\infty$ is the localized Sobolev constant at infinity defined in (\ref{DefSinf}).
\end{teo}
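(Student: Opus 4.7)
The plan is to combine Chabrowski's strategy for unbounded domains with the refined bounded-domain concentration-compactness principle of \cite{FBSS1}. Since $(u_n)$ is bounded in $D^{1,p(\cdot)}(\R^N)$, the nonnegative Radon measures $|\nabla u_n|^{p(x)}\,dx$ and $|u_n|^{q(x)}\,dx$ have uniformly bounded total mass, so, up to a subsequence, they converge weakly-$*$ to bounded measures $\mu$ and $\nu$ on $\R^N$. To obtain (\ref{CCP1})--(\ref{CCP3}), I apply the refined CCP of \cite{FBSS1} on each ball $B_r$. That result gives the Dirac-mass decomposition of $\mu$ and $\nu$ on $B_r$, with atoms located at points of $\A\cap B_r$ and weights linked by the localized constant $S_{x_i}$. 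Letting $B_r \nearrow \R^N$, the atoms stabilize to an at most countable family $\{x_i\}\subset \A$, yielding the first three conclusions.

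Next, for (\ref{CCPinf1})--(\ref{CCPinf2}), given any $R$ with $\mu(\p B_R)=\nu(\p B_R)=0$ (which holds for all but countably many $R$), weak convergence yields $\int_{\overline B_R} |\nabla u_n|^{p(x)}\,dx \to \mu(\overline B_R)$. Splitting and taking $\limsup_n$,
\begin{equation*}
\limsup_n \int_{\R^N} |\nabla u_n|^{p(x)}\,dx = \mu(\overline B_R) + \limsup_n \int_{B_R^c} |\nabla u_n|^{p(x)}\,dx.
\end{equation*}
The left side is independent of $R$; letting $R\to\infty$ along admissible values gives $\mu(\overline B_R)\to\mu(\R^N)$ while the second summand decreases to $\mu_\infty$ by its very definition. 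The same reasoning establishes (\ref{CCPinf2}).

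The heart of the argument is inequality (\ref{CCPinf3}). Fix $R$ large and a smooth cutoff $\phi_R$ with $\phi_R\equiv 0$ on $B_R$, $\phi_R\equiv 1$ outside $B_{2R}$, and $|\nabla\phi_R|\le C/R$. Then $\phi_R u_n \in W_0^{1,p(\cdot)}(\R^N\setminus B_R)$, so by the Sobolev inequality on $\R^N\setminus B_R$,
\begin{equation*}
S_R\,\|\phi_R u_n\|_{L^{q(\cdot)}(\R^N)} \le \|\nabla(\phi_R u_n)\|_{L^{p(\cdot)}(\R^N)} \le \|\phi_R\nabla u_n\|_{L^{p(\cdot)}(\R^N)}+\|u_n\nabla\phi_R\|_{L^{p(\cdot)}(\R^N)}.
\end{equation*}
Letting first $n\to\infty$: by the local compactness of the embedding (Appendix A), $u_n\to u$ strongly in $L^{p(\cdot)}(B_{2R}\setminus B_R)$, so $\|u_n\nabla\phi_R\|_{p(\cdot)}\to \|u\nabla\phi_R\|_{p(\cdot)}$; and since $|\nabla\phi_R|\le C/|x|$ on its support, Hardy's inequality (\ref{Hardy}) bounds this latter quantity by the tail $\||x|^{-1}u\|_{L^{p(\cdot)}(B_{2R}\setminus B_R)}$, which vanishes as $R\to\infty$. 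Meanwhile, for every $R$ with $\mu(\p B_R)=\nu(\p B_R)=0$, $\limsup_n\int|\phi_R u_n|^{q(x)}dx$ and $\limsup_n\int|\phi_R\nabla u_n|^{p(x)}dx$ tend, as $R\to\infty$, to $\nu_\infty$ and $\mu_\infty$ respectively, while $S_R\to S_\infty$ by definition (\ref{DefSinf}). Translating the norm inequality above into the desired modular form requires Proposition \ref{norma.y.rho}: since by (H2) the exponents $p$ and $q$ differ from $p(\infty)$, $q(\infty)$ by at most $\eps_R\to 0$ on $\{|x|\ge R\}$, the modular-norm correspondence gives, up to $o_R(1)$ error, a relation with constant exponents $q(\infty)$ and $p(\infty)$. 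Combining these limits yields $S_\infty\nu_\infty^{1/q(\infty)}\le \mu_\infty^{1/p(\infty)}$.

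The principal technical obstacle is precisely this norm-modular translation combined with the iterated limit $\lim_R\limsup_n$: the Sobolev inequality is naturally a norm inequality, while $\mu_\infty$, $\nu_\infty$ are defined via modulars, and the variable exponents must be transported to their limit values $p(\infty)$, $q(\infty)$ in a quantitatively controlled way using (H2). Handling the order of the limits and absorbing the $o_R(1)$ terms arising both from $\|u\nabla\phi_R\|_{p(\cdot)}$ and from the exponent perturbation is the delicate point of the proof.
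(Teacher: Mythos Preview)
Your proposal is correct and follows essentially the same route as the paper: the local part is delegated to \cite{FBSS1}, the mass identities \eqref{CCPinf1}--\eqref{CCPinf2} come from splitting via a cutoff at radius $R$, and \eqref{CCPinf3} is obtained by applying the Sobolev inequality on $\R^N\setminus B_R$ to $\phi_R u_n$, controlling the cross term, and passing from norms to modulars using that $p,q$ are within $\eps$ of $p(\infty),q(\infty)$ for $|x|\ge R$. The only minor deviations are that the paper bounds the cross term $\|u\nabla\phi_R\|_{p(\cdot)}$ via H\"older (using $u\in L^{p^*(\cdot)}$ and $\|\,|\nabla\phi_R|^{p(\cdot)}\|_{L^{N/p(\cdot)}}\le C$) rather than Hardy, and the local compactness you invoke is the standard subcritical embedding on bounded domains, not the weighted result of Appendix~A.
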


The first part of the theorem, namely \eqref{CCP1}-\eqref{CCP3}, was proved in \cite{FBSS1}.
The 2nd part, namely \eqref{CCPinf1}-\eqref{CCPinf3} was proved in \cite{Fu2} but with a global Sobolev constant in \eqref{CCP3}.
The main contribution of the present theorem is inequality \eqref{CCPinf3} with the presence of the localized Sobolev constant at infinity.
This will allow us to formulate in the next sections a local condition at infinity for the existence of a solution to \eqref{MainEqu}.

\begin{proof}

We refer to  \cite{FBSS1} for the proof of \eqref{CCP1}-\eqref{CCP3} and concentrate on \eqref{CCPinf1}-\eqref{CCPinf3}.

\medskip

A detailed proof of \eqref{CCPinf1}-\eqref{CCPinf2} can be found in \cite{Fu2}[thm 2.5]. We briefly sketch it for the sake of completeness.

Consider a smooth function $\phi:[0,+\infty)\to [0,1]$ such that $\phi\equiv 0$ in $[0,1]$ and $\phi\equiv 1$ in $[2,+\infty)$.
Then $\phi_R(x):=\phi(|x|/R)$ is smooth and satisfies $\phi_R(x)=1$ for $|x|\ge 2R$, $\phi_R(x)=0$ for $|x|\le R$ and $0\leq\phi_R(x)\leq1$.
We then write that
\begin{equation}\label{mu1}
  \int_{\R^N}|\nabla u_n|^{p(x)}\,dx=\int_{\R^N}|\nabla u_n|^{p(x)}\phi_R\,dx+\int_{\R^N}|\nabla u_n|^{p(x)}(1-\phi_R)\,dx.
\end{equation}
Observe first that
$$ \int_{\{|x|>2R\}}|\nabla u_n|^{p(x)}\,dx\leq\int_{\R^N}|\nabla u_n|^{p(x)}\phi_R^{p(x)}\,dx
\leq\int_{\{|x|>R\}}|\nabla u_n|^{p(x)}\,dx
$$
so that
\begin{equation}\label{mu2}
\mu_\infty=\lim_{R\to\infty}\limsup_{n\to\infty}\int_{\R^N}|\nabla u_n|^{p(x)}\phi_R^{p(x)}\,dx.
\end{equation}
In the same way
\begin{equation}\label{nu2}
\nu_\infty=\lim_{R\to\infty}\limsup_{n\to\infty}\int_{\R^N}|u_n|^{q(x)}\phi_R^{q(x)}\,dx.
\end{equation}
On the other hand, since $1-\phi_R$ is smooth with compact support, we have by definition of $\mu$ that
for $R$ fixed,
$$ \lim_{n\to +\infty}  \int_{\R^N}(1-\phi_R)|\nabla u_n|^{p(x)}\,dx = \int_{\R^N}(1-\phi_R)\,d\mu.  $$
Noticing that $ \lim_{R\to +\infty}\int_{\R^N} \phi_R\,d\mu = 0 $ by dominated convergence, we obtain
\begin{equation}\label{mu3}
\lim_{R\to\infty}\limsup_{n\to\infty}\int_{\R^N} (1-\phi_R)|\nabla u_n|^{p(x)}\,dx = \mu(\R^N).
\end{equation}
Plugging \eqref{mu2} and \eqref{mu3} into \eqref{mu1} yields \eqref{CCPinf1}. The proof of \eqref{CCPinf2} is similar.

\medskip

We prove (\ref{CCPinf3}).
By definition of  $S_R=S(p(\cdot),q(\cdot),\R^N\setminus B_R)$ we can write that
\begin{eqnarray}\label{Eq4}
S_R\|u_n\phi_R\|_{L^{q(\cdot)}(\R^N\setminus B_R)}
& \leq & \|\nabla (u_n \phi_R)\|_{L^{p(\cdot)}(\R^N\setminus B_R)} \nonumber\\
& \leq & \|(\nabla u_n) \phi_R\|_{L^{p(\cdot)}(\R^N\setminus B_R)}
+\|u_n \nabla\phi_R\|_{L^{p(\cdot)}(B_{R+1}\setminus B_R)}.
\end{eqnarray}
Observe that since $u_n\to u$ in $L^{p(\cdot)}_{loc}(\R^N)$ we have
\begin{eqnarray*}
\limsup_{n\to +\infty}\int_{B_{2R}\setminus B_R} |u_n|^{p(x)}|\nabla\phi_R|^{p(x)}\,dx
&= &\int_{B_{2R}\setminus B_R} |u|^{p(x)}|\nabla\phi_R|^{p(x)}\,dx \\
&\le & C\||u|^{p(\cdot)}\|_{L^\frac{p^*(\cdot)}{p(\cdot)}(B_{2R}\setminus B_R)}
      \|  |\nabla\phi_R|^{p(\cdot)} \|_{L^\frac{N}{p(\cdot)}(\R^N)} \\
&\le & C\||u|^{p(\cdot)}\|_{L^\frac{p*(\cdot)}{p(\cdot)}(B_{2R}\setminus B_R)}.
\end{eqnarray*}
Since $u\in L^{p^*(\cdot)}$ we deduce that
\begin{equation}\label{EQ100}
\lim_{R\to +\infty}\limsup_{n\to +\infty} \|u_n \nabla\phi_R\|_{L^{p(\cdot)}(B_{2R}\setminus B_R)}=0.
\end{equation}
Independently, letting $p_R^+=\sup_{|x|\ge R}p(x)$ and
$p_R^-=\inf_{|x|\ge R}p(x)$, we have
\begin{align*}
\|(\nabla u_n) \phi_R\|_{L^{p(\cdot)}(\R^N\setminus B_R)}
&\leq\max\left\{\left(\int_{\R^N}|(\nabla u_n)\phi_R|^{p(x)}\,dx\right)^\frac{1}{p_R^+},
\left(\int_{\R^N}|(\nabla u_n)\phi_R|^{p(x)}\,dx\right)^\frac{1}{p_R^-}\right\}
\end{align*}
Given $\eps>0$ we then have for $R$ big that
\begin{align*}
\|(\nabla u_n) \phi_R\|_{L^{p(\cdot)}(\R^N\setminus B_R)}
&\leq\max\left\{\left(\int_{\R^N}|(\nabla u_n)\phi_R|^{p(x)}\,dx\right)^\frac{1}{p(\infty)-\varepsilon},\left(\int_{\R^N}|(\nabla u_n)\phi_R|^{p(x)}\,dx\right)^\frac{1}{p(\infty)+\varepsilon}\right\}.
\end{align*}
Analogously,
\begin{align*}
\|u_n\phi_R\|_{L^{q(\cdot)}(\R^N\setminus B_R)}
&\ge \min\left\{\left(\int_{\R^N}|u_n\phi_R|^{q(x)}\,dx\right)^\frac{1}{q^+},\left(\int_{\R^N}|u_n\phi_R|^{q(x)}\,dx\right)^\frac{1}{q^-}\right\}\\
&\geq \min\left\{\left(\int_{\R^N}|u_n\phi_R|^{q(x)}\,dx\right)^\frac{1}{q(\infty)-\varepsilon},\left(\int_{\R^N}|u_n\phi_R|^{q(x)}\,dx\right)^\frac{1}{q(\infty)+\varepsilon}\right\}.
\end{align*}
Assume for instance that there exists $R_0$ such that
$\int_{\R^N}|(\nabla u_n)\phi_{R_0}|^{p(x)}\,dx<1$
and $\int_{\R^N}|u_n\phi_{R_0}|^{q(x)}\,dx<1$
so that $\int_{\R^N}|(\nabla u_n)\phi_R|^{p(x)}\,dx<1$ and
$\int_{\R^N}|u_n\phi_{R_0}|^{q(x)}\,dx<1$  for all $R>R_0$.
The others cases can be handled similarly. Then \eqref{Eq4} gives for $R$ big,
$$
S_R\left(\int_{\R^N}|u_n\phi_R|^{q(x)}\,dx\right)^\frac{1}{q(\infty)-\varepsilon}\leq\left(\int_{\R^N}|(\nabla u_n)\phi_R|^{p(x)}\,dx\right)^\frac{1}{p(\infty)+\varepsilon}
+ \|u_n \nabla\phi_R\|_{L^{p(\cdot)}(B_{2R}\setminus B_R)}.
$$
Taking limit as $n\to\infty$ and then as $R\to\infty$, we obtain in view of \eqref{mu2}, \eqref{nu2} and \eqref{EQ100} that
$$ S_\infty\nu_\infty^\frac{1}{q(\infty)-\varepsilon} \leq\mu_\infty^\frac{1}{p(\infty)+\varepsilon} $$
for any $\eps>0$. We thus deduce (\ref{CCPinf3}).

\end{proof}

\section{Local existence condition for equation \eqref{MainEqu}.}

In this section we use the CCP at infinity Theorem \ref{propCCP} to obtain local existence condition for  equation \eqref{MainEqu} namely
$$ -\Delta_{p(x)}u +k(x)|u|^{p(x)-2}u = K(x)|u|^{q(x)-2}u  \qquad \text{ in }\R^N. $$
We will look for a solution as a critical point of the associated functional
\begin{equation}\label{DefF}
\F(u):=\int_{\R^N}\frac{|\nabla u|^{p(x)}}{p(x)}-K(x)\frac{|u|^{q(x)}}{q(x)}+  k(x)\frac{|u|^{p(x)}}{p(x)}\,dx.
\end{equation}
Besides assumptions (H1), (H2), (H3) we will also assume in this section that
\begin{itemize}
\item[(H4)] the nonlinearity is superlinear in the sense that $q^->p^+$ where $p^+=\sup_{\R^N}p$ and $q^-=\inf_{\R^N}q$.
\item[(H5)] $k:\R^N\to \R$ is continuous bounded and the functional $u\to \int_{\R^N} |\nabla u|^{p(x)}+ k(x)|u|^{p(x)}\,dx$
            is coercive in the sense that the expression
\begin{equation}\label{DefNorm}
 \|u\|:=\inf\,\Big\{\lambda\ge 0\, \text{ s.t. } \int_{\R^N}  \frac{|\nabla u|^{p(x)} + k(x) |u|^{p(x)} }{\lambda} \,dx\le 1 \Big\}
\end{equation}
defines a norm equivalent to the standard norm of $W^{1,p(\cdot)}(\R^N)$.
\item[(H6)] the embedding from $D^{1,p(\cdot)}(\R^N)$ into $L^{p(\cdot)}(\R^N,k\,dx)$ is compact.
\item[(H7)] $K:\R^N\to \R$ is continuous non-negative and has a limit at infinity $K(\infty):=\lim_{|x|\to +\infty}K(x)$.
\end{itemize}
A  sufficient condition for assumption (H6) to hold is given in  prop. \ref{Chab} in Appendix A.

Recall that a sequence $\{u_n\}_{j\in\N}\subset W^{1,p(\cdot)}(\R^N)$ is a Palais-Smale sequence (P-S) for $\F$ if the sequence
$(\F(u_n))_n$ is bounded and $\F'(u_n)\to 0$ in $W^{1,p(\cdot)}(\R^N)'$.
Moreover $\F$ is said to satisfy the P-S condition at level $c$ if any P-S sequence $\{u_n\}_{j\in\N}\subset W^{1,p(\cdot)}(\R^N)$
for $\F$ such that $\F(u_n)\to c$ has a subsequence strongly convergent in $W^{1,p(\cdot)}(\R^N)$.
We will prove that $\F$ satisfies the P-S condition at level small enough depending on the localized Sobolev constant 
$S_x$, $x\in\A$, and $S_\infty$. 

We first prove a  preliminary lemma which is more or less classical. 

\begin{lema}\label{acotada1}
Let $\{u_n\}_{n\in\N}\subset W^{1,p(\cdot)}(\R^N)$ be a Palais-Smale sequence for $\F$.
Then, up to a subsequence, there exists $u\in  W^{1,p(\cdot)}(\R^N)$ such that $u_n\to u$ weakly in $W^{1,p(\cdot)}(\R^N)$ 
and $u$ is a weak solution of \eqref{MainEqu} with $\F(u)\ge 0$. 

Moreover letting $\mu$, $\nu$, $\mu_i$, $\nu_i$, $\mu_\infty$, $\nu_\infty$
be as in the concentration-compactness principle  Theorem \ref{propCCP} when applied to $(u_n)_n$ we have the following estimates: 
\begin{eqnarray}
 \nu_i &\ge &  S_{x_i}^N K(x_i)^{-\frac{N}{p(x_i)}}, \quad  \mu_i \ge   S_{x_i}^N K(x_i)^{1-\frac{N}{p(x_i)}}
            \qquad \text{if } K(x_i)>0,  \label{CotaInf1} \\
\mu_i & =& \nu_i=0 \qquad \text{ if } K(x_i)= 0, \label{CotaInf2}
\end{eqnarray}
and a similar result at infinity:
\begin{eqnarray}
 \nu_\infty &\ge &  S_\infty^N K(\infty)^{-\frac{N}{p(\infty)}}, \quad 
\mu_\infty \ge  S_\infty^N K(\infty)^{1-\frac{N}{p(\infty)}}
            \qquad \text{if } K(\infty)>0 \label{CotaInf3}  \\
\mu_\infty & =& \nu_\infty=0 \qquad \text{ if } K(\infty)= 0. \label{CotaInf4}
\end{eqnarray}
\end{lema}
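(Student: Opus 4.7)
The plan proceeds in four steps: boundedness of the Palais--Smale sequence, identification of the weak limit $u$ as a weak solution of \eqref{MainEqu}, the sign $\F(u)\ge 0$, and the lower bounds \eqref{CotaInf1}--\eqref{CotaInf4} obtained by testing the equation against cut-offs localized at each atom $x_i$ and at infinity.

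For boundedness and the weak limit, I would use the superlinearity trick. Computing
$$\F(u_n)-\tfrac{1}{q^-}\F'(u_n)(u_n)=\int_{\R^N}\Big(\tfrac{1}{p(x)}-\tfrac{1}{q^-}\Big)\bigl(|\nabla u_n|^{p(x)}+k|u_n|^{p(x)}\bigr)+\Big(\tfrac{1}{q^-}-\tfrac{1}{q(x)}\Big)K|u_n|^{q(x)}\,dx,$$
and observing by (H4) that the second integrand is non-negative and the first is bounded below by $(\tfrac{1}{p^+}-\tfrac{1}{q^-})\bigl(|\nabla u_n|^{p(x)}+k|u_n|^{p(x)}\bigr)$, I would combine with $|\F(u_n)|\le C$, $|\F'(u_n)(u_n)|\le\varepsilon_n\|u_n\|$, the coercivity (H5), and Proposition \ref{norma.y.rho} to obtain $c_1\|u_n\|^{p^-}\le C+\varepsilon_n\|u_n\|$ for $\|u_n\|$ large, hence boundedness of $(u_n)$ in $W^{1,p(\cdot)}(\R^N)$; reflexivity then yields a weakly convergent subsequence with limit $u$. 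To pass to the limit in $\langle\F'(u_n),\varphi\rangle\to 0$ for $\varphi\in C^\infty_c(\R^N)$, I would use (H6) to handle the $k$-term, extract a further subsequence giving $u_n\to u$ a.e.\ and $\nabla u_n\to\nabla u$ a.e.\ (the latter via the standard Browder--Minty monotonicity argument adapted to the $p(x)$-Laplacian by testing against $(u_n-u)\zeta$ for a suitable cut-off $\zeta$), and dispose of the critical source term by the usual equi-integrability argument. Hence $\F'(u)=0$ weakly, so that applying the identity above to $u$ gives $\F(u)=\F(u)-\tfrac{1}{q^-}\F'(u)(u)\ge 0$.

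For \eqref{CotaInf1}--\eqref{CotaInf2}, I would fix $i\in I$ and pick a cut-off $\phi_{i,\varepsilon}\in C^\infty_c(B_{x_i}(2\varepsilon))$ with $\phi_{i,\varepsilon}\equiv 1$ on $B_{x_i}(\varepsilon)$ and $|\nabla\phi_{i,\varepsilon}|\le C/\varepsilon$, with $\varepsilon$ small enough that $x_i$ is the only atom of $\mu$ in $B_{x_i}(2\varepsilon)$. The Palais--Smale condition gives $\langle\F'(u_n),\phi_{i,\varepsilon}u_n\rangle\to 0$; sending $n\to\infty$ first (using the weak convergences $|\nabla u_n|^{p(x)}dx\rightharpoonup\mu$ and $|u_n|^{q(x)}dx\rightharpoonup\nu$, continuity of $K$, and (H6) for the $k$-term) and then $\varepsilon\to 0$, the cross term $\int u_n|\nabla u_n|^{p(x)-2}\nabla u_n\cdot\nabla\phi_{i,\varepsilon}\,dx$ will vanish by H\"older's inequality \eqref{HolderIneq} together with $|u|^{p^*(\cdot)}\in L^1_{\operatorname{loc}}(\R^N)$, leaving $\mu(\{x_i\})=K(x_i)\nu_i$. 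Since \eqref{CCP1} implies $\mu(\{x_i\})\ge\mu_i$, I get $\mu_i\le K(x_i)\nu_i$. If $K(x_i)>0$, combining this with \eqref{CCP3} and the arithmetic identity $p(x_i)\bigl(1+N/p^*(x_i)\bigr)=N$ first yields $\nu_i\ge S_{x_i}^N K(x_i)^{-N/p(x_i)}$ and then $\mu_i\ge S_{x_i}^N K(x_i)^{1-N/p(x_i)}$. If $K(x_i)=0$, the identity forces $\mu_i=0$, and \eqref{CCP3} (with $S_{x_i}>0$) forces $\nu_i=0$.

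For the infinity bounds \eqref{CotaInf3}--\eqref{CotaInf4} the same scheme applies with the cut-off $\phi_R$ from the proof of Theorem \ref{propCCP}: the cross term vanishes by \eqref{EQ100}, (H7) provides $\lim_R\limsup_n\int\phi_R K|u_n|^{q(x)}\,dx=K(\infty)\nu_\infty$, and (H6) again disposes of the $k$-term, leaving $\mu_\infty=K(\infty)\nu_\infty$. Combining with \eqref{CCPinf3} and $S_\infty>0$ (Proposition \ref{SinftyPositive}, via the critical assumption $q(\infty)=p(\infty)^*$) will give the claimed bounds exactly as in the local case. I expect the main obstacle to be the a.e.\ convergence of $\nabla u_n$ in the identification step, which needs the Browder--Minty-style monotonicity argument carefully adapted to variable exponents; the remaining manipulations are bookkeeping with H\"older's inequality in variable exponent spaces.
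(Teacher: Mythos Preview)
Your proposal is correct and follows essentially the same route as the paper. The paper is deliberately sketchy (it opens with ``The proof is more or less classical so we will be sketchy''), and your outline fills in the same steps: boundedness via $\F(u_n)-\tfrac{1}{q^-}\langle\F'(u_n),u_n\rangle$ and (H4)--(H5); the key identities $\mu(\{x_i\})=K(x_i)\nu_i$ and $\mu_\infty=K(\infty)\nu_\infty$ obtained by testing $\F'(u_n)$ against $u_n\phi_\delta$ and $u_n\phi_R$; and the algebra combining these with \eqref{CCP3}, \eqref{CCPinf3}. Two minor remarks: (i) the paper states the local relation as the equality $\mu_i=K(x_i)\nu_i$ (taking $\mu_i:=\mu(\{x_i\})$ as in the standard CCP), whereas you more cautiously derive $\mu_i\le K(x_i)\nu_i$ --- either suffices for the bounds; (ii) for the cross term at infinity the paper uses the cruder estimate $\bigl|\int u_n|\nabla u_n|^{p(x)-2}\nabla u_n\cdot\nabla\phi_R\,dx\bigr|\le C\|\nabla\phi_R\|_\infty\le C/R$ rather than invoking \eqref{EQ100}, but both work. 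For the a.e.\ convergence of gradients the paper simply refers to \cite{FuZhang2}[Thm 3.1], which is precisely the monotonicity argument you describe.
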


\begin{proof}
The proof is more or less classical so we will be sketchy. 
First, recalling the definition of a PS sequence, it is easily seen that 
\begin{align*}
C+o(1)\|u_n\|_{W^{1,p(\cdot)}(\R^N)}
&\geq \mathcal{F}(u_n)-\frac{1}{q^-}\langle\mathcal{F}'(u_n),u_n\rangle\\
&\geq \left(\frac{1}{p^+} - \frac{1}{q^-}\right) \int_{\R^N} |\nabla u_n|^{p(x)}+ k|u_n|^{p(x)}\, dx\\
\end{align*}
In view of assumptions (H4) and (H5), we deduce that $\{u_n\}_{n\in\N}$ is bounded in $W^{1,p(\cdot)}(\R^N)$.
Up to a subsequence we can thus assume that $(u_n)_n$ weakly converge in $W^{1,p(\cdot)}(\R^N)$ to some $u$.  

Estimates \eqref{CotaInf1}-\eqref{CotaInf4} are a direct consequence of \eqref{CCP3}, \eqref{CCPinf3} and the following:
\begin{equation}\label{Cota}
\mu_i = \nu_i K(x_i) \qquad \text{for any }i\in I,
\end{equation}
 and
\begin{equation}\label{CotaInfinity}
\mu_\infty = K(\infty)\nu_\infty.
\end{equation}

To prove \eqref{Cota}, we fix a concentration point $x_i$, a smooth function
$\phi:\R^N\to [0,1]$  with compact support in $B_2$  such that $\phi=1$ in $B_1$,
and consider $\phi_\delta(x):=\phi(|x-x_i|/\delta)$.
We then write that
\begin{eqnarray*}
o(1) &=& \langle  \F'(u_n), u_n\phi_\delta\rangle \nonumber \\
& = & \int_{\R^N} |\nabla u_n|^{p(x)}\phi_\delta\, dx - \int_{\R^N} K|u_n|^{q(x)}\phi_\delta\, dx
+ \int_{\R^N}  u_n|\nabla u_n|^{p(x)-2}\nabla u_n \nabla\phi_\delta\, dx \\
&& + \int_{\R^N} k|u_n|^{p(x)}\phi_\delta\, dx. 
\end{eqnarray*}
Since $u_n\to 0$ in $L_{loc}^{p(\cdot)}(\R^N)$ we obtain by passing to the limit $n\to +\infty$ that 
\begin{equation}\label{Equ300}
  \int_{\R^N}\phi_\delta\, d\mu - \int_{\R^N} K \phi_\delta\,d\nu + \int_{\R^N} k|u|^{p(x)}\phi_\delta\, dx
	 =  - \lim_{n\to +\infty} \int_{\R^N}  u_n|\nabla u_n|^{p(x)-2}\nabla u_n \nabla\phi_\delta\, dx. 
\end{equation}
Observe that 
\begin{eqnarray*}
 \Big| \int_{\R^N}  (u_n-u)|\nabla u_n|^{p(x)-2}\nabla u_n \nabla\phi_\delta\, dx \Big| 
&\le & \frac{C}{\delta } \|u_n-u\|_{L^{p(\cdot)}(B_{x_i}(2\delta))} \||\nabla u_n|^{p(x)-1}\|_{L^\frac{p(\cdot)}{p(\cdot)-1}(\R^N)}  \\ 
& \le& \frac{C}{\delta} \|u_n-u\|_{L^{p(\cdot)}(B_{x_i}(2\delta))} 
\end{eqnarray*} 
which goes to $0$ as $n\to +\infty$ for $\delta$ fixed.  Moreover 
\begin{eqnarray*}
 \Big| \int_{\R^N}  u|\nabla u_n|^{p(x)-2}\nabla u_n \nabla\phi_\delta\, dx \Big| 
& \le &  \frac{C}{\delta }  \||\nabla u_n|^{p(x)-1}\|_{L^\frac{p(\cdot)}{p(\cdot)-1}(\R^N)} \|u\|_{L^{p^*(\cdot)}(B_{x_i}(2\delta))} 
     \|1\|_{L^N(B_{x_i}(2\delta))}  \\ 
& \le & C \|u\|_{L^{p^*(\cdot)}(B_{x_i}(2\delta))}  
\end{eqnarray*} 
where the constant $C$ does not depend on $n$. 
We thus obtain from \eqref{Equ300} that 
\begin{equation*}
  \int_{\R^N}\phi_\delta\, d\mu - \int_{\R^N} K \phi_\delta\,d\nu + \int_{B_{x_i}(2\delta)} k|u|^{p(x)}\phi_\delta\, dx
	 =  O\Big( \|u\|_{L^{p^*(\cdot)}(B_{x_i}(2\delta))}  \Big).  
\end{equation*} 
Letting $\delta\to 0$ then gives \eqref{Cota}.

To prove \eqref{CotaInfinity}, we fix $\phi:\R^N\to [0,1]$ smooth such that $\phi\equiv 0$ in $B_1$
and $\phi\equiv 1$ in $\R^N\backslash B_2$, and then consider $\phi_R(x):=\phi(x/R)$.
Then
\begin{eqnarray}\label{Cota10}
o(1) & = & \langle \mathcal{F}'(u_n), u_n \phi_R\rangle \nonumber \\
& =&  \int_{\R^N}|\nabla u_n|^{p(x)}\phi_R^{p(x)}\,dx
- \int_{\R^N}K|u_n|^{q(x)}\phi_R^{q(x)} \,dx
+ \int_{\R^N} u_n|\nabla u_n|^{p(x)-2}\nabla u_n\nabla\phi_R \,dx\nonumber \\
&&+\int_{\R^N} k |u_n|^{p(x)}\phi_R^{p(x)}\, dx. \nonumber \\
& =: & A_n+B_n-C_n+D_n.
\end{eqnarray}
In view of \eqref{mu2} and \eqref{nu2},
$$ \lim_{R\to +\infty}\limsup_{n\to +\infty}\, A_n= \mu_\infty \qquad \text{and} \qquad
\lim_{R\to +\infty}\limsup_{n\to +\infty}\, B_n= K(\infty)\nu_\infty. $$
Independently, since $(u_n)$ is bounded in  $W^{1,p(\cdot)}(\R^N)$ and $\|\nabla \phi_R\|_\infty\le C/R$,
the third integral is bounded by $ C/R$ so that
$$ \lim_{R\to +\infty}\limsup_{n\to +\infty}\, C_n= 0.$$
Eventually in view of assumption (H6),
$$ \lim_{R\to +\infty}\limsup_{n\to +\infty}\, C_n
= \lim_{R\to +\infty}\int_{\R^N} k |u|^{p(x)}\phi_R^{p(x)}\, dx
= 0. $$
Thus letting $n\to +\infty$ and then $R\to +\infty$ in \eqref{Cota10} yields \eqref{CotaInfinity}.

\medskip 

It follows from \eqref{CotaInf1}-\eqref{CotaInf2} that there are at most a finite number of concentration points.   
Mimicking the proof of \cite{FuZhang2}[Thm 3.1], we can prove that $\nabla u_n\to \nabla u$ a.e. 
and then that $u$ is a weak solution of \eqref{MainEqu}.  
Recalling that  $p^+<q^-$ and that $K$ is non-negative it is then easily seen that $\F(u)\ge 0$. Indeed 
\begin{equation*}
\begin{split}
 \F(u) & \ge \frac{1}{p^+} \int_{\R^N} \left(|\nabla u|^{p(x)}+ k(x)|u|^{p(x)}\right)\,dx
           - \frac{1}{q^-} \int_{\R^N} K(x) |u|^{q(x)}\,dx  \\
      & = \left( \frac{1}{p^+} - \frac{1}{q^-} \right) \int_{\R^N} K(x) |u|^{q(x)}\,dx 
\end{split}
\end{equation*}
which is non-negative. 
\end{proof}

The previous lemma gives easily the following

\begin{prop}\label{PropPSCond}
The functional $\F$ defined in \eqref{DefF} verifies the PS condition at level $c$ for all real numbers $c$ satisfying
$$ c<\inf_{x\in \A\cup\{\infty\},\, K(x)>0} \frac1N S_{x}^N K(x)^{1-\frac{N}{p(x)}}.$$
\end{prop}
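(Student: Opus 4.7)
The plan is to show that when $c$ lies below the stated threshold, any Palais--Smale sequence $(u_n)_n$ for $\F$ at level $c$ must be strongly relatively compact in $W^{1,p(\cdot)}(\R^N)$. Lemma \ref{acotada1} already does most of the groundwork: up to a subsequence, $(u_n)$ is bounded and converges weakly to some $u \in W^{1,p(\cdot)}(\R^N)$ which is a weak solution of \eqref{MainEqu} with $\F(u) \ge 0$; the concentration-compactness principle of Theorem \ref{propCCP} yields measures $\mu, \nu$ with atoms $(\mu_i,\nu_i)_{i \in I}$ at critical points $x_i \in \A$ and tail masses $\mu_\infty, \nu_\infty$; and one has $\mu_i = K(x_i)\nu_i$, $\mu_\infty = K(\infty)\nu_\infty$ together with the lower bounds \eqref{CotaInf1}--\eqref{CotaInf4}. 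The task is thus reduced to ruling out every non-trivial atom.

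For this I would decompose $c$ along the measures produced by the CCP. Since $(u_n)$ is bounded and $\F'(u_n) \to 0$ in the dual, $\langle \F'(u_n), u_n\rangle \to 0$, so $c = \lim_n \F(u_n)$, and it suffices to take limits in each of the three pieces of the functional. For the integrals $\int |\nabla u_n|^{p(x)}/p(x)\,dx$ and $\int K|u_n|^{q(x)}/q(x)\,dx$ a cutoff $\phi_R$ argument of the type used to prove \eqref{CCPinf1}--\eqref{CCPinf2} applies, provided we invoke that $1/p(\cdot)$ and $K(\cdot)/q(\cdot)$ are continuous and tend respectively to $1/p(\infty)$ and $K(\infty)/q(\infty)$ at infinity (by (H2) and (H7)); the intermediate term $\int k|u_n|^{p(x)}/p(x)\,dx$ converges to its limit by the compact embedding (H6). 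Using $\mu \ge |\nabla u|^{p(x)}\,dx + \sum \mu_i \delta_{x_i}$ and $\nu = |u|^{q(x)}\,dx + \sum \nu_i \delta_{x_i}$ one then obtains
\[
c \;\ge\; \F(u) + \sum_{i\in I}\left(\frac{\mu_i}{p(x_i)} - \frac{K(x_i)\nu_i}{q(x_i)}\right) + \left(\frac{\mu_\infty}{p(\infty)} - \frac{K(\infty)\nu_\infty}{q(\infty)}\right).
\]
Substituting $\mu_i = K(x_i)\nu_i$ and using the criticality $q(x_i) = p^*(x_i)$ (resp.\ $q(\infty) = p^*(\infty)$ from (H3)), each bracket collapses by the identity $1/p - 1/p^* = 1/N$ to $\mu_i/N$ (resp.\ $\mu_\infty/N$); combined with $\F(u)\ge 0$ this yields $c \ge (1/N)(\sum_i \mu_i + \mu_\infty)$. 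A single non-trivial atom then produces, through \eqref{CotaInf1} or \eqref{CotaInf3}, a lower bound on $c$ contradicting the hypothesis, so every atom must vanish.

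Once there are no atoms and no tail mass, \eqref{CCPinf1} gives modular convergence $\int |\nabla u_n|^{p(x)}\,dx \to \int |\nabla u|^{p(x)}\,dx$, and (H6) gives $\int k|u_n|^{p(x)}\,dx \to \int k|u|^{p(x)}\,dx$, so the modular associated with the equivalent norm from (H5) converges and hence $\|u_n\| \to \|u\|$ via the standard modular--norm relations of Proposition \ref{norma.y.rho}; uniform convexity of $W^{1,p(\cdot)}(\R^N)$ then upgrades the weak convergence to strong convergence, establishing the Palais--Smale property. The main technical obstacle lies in the second paragraph: the CCP as stated only tracks $\int |\nabla u_n|^{p(x)}\,dx$ and $\int |u_n|^{q(x)}\,dx$, whereas we need the limits of the \emph{weighted} versions, which requires extending the cutoff argument of Theorem \ref{propCCP} to non-constant (but continuous, with finite limits at infinity) weights. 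The algebraic cancellation $1/p - 1/p^* = 1/N$ producing the clean factor is precisely what makes the localized Sobolev constants $S_{x_i}$ and $S_\infty$ the correct obstruction to compactness.
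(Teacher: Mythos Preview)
Your overall approach matches the paper's proof: invoke Lemma~\ref{acotada1} for the weak limit $u$ with $\F(u)\ge 0$ and the identities $\mu_i=K(x_i)\nu_i$, $\mu_\infty=K(\infty)\nu_\infty$, bound $c$ below by the atomic contributions via the identity $1/p-1/p^*=1/N$, and conclude that any nontrivial atom pushes $c$ above the threshold. You are in fact more careful than the paper about the passage to the limit in $\F(u_n)$ with the continuous weights $1/p(\cdot)$ and $K(\cdot)/q(\cdot)$; the paper writes this limit directly as an integral against $d\mu$ and $d\nu$ and jumps to the inequality.

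There is, however, a gap in your final paragraph. You assert that once all atoms and the tail mass vanish, \eqref{CCPinf1} yields modular convergence $\int|\nabla u_n|^{p(x)}\,dx\to\int|\nabla u|^{p(x)}\,dx$. This does not follow: \eqref{CCPinf1} only gives $\limsup_n\int|\nabla u_n|^{p(x)}\,dx=\mu(\R^N)$ (after $\mu_\infty=0$), and \eqref{CCP1} is merely the \emph{inequality} $\mu\ge|\nabla u|^{p(x)}\,dx+\sum_i\mu_i\delta_{x_i}$, so even with all $\mu_i=0$ the diffuse part of $\mu$ may strictly dominate $|\nabla u|^{p(x)}\,dx$. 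Your uniform-convexity route therefore needs an extra ingredient. One fix is to observe that \eqref{CCP2} is an equality, so $\nu_i=\nu_\infty=0$ forces $\int K|u_n|^{q(x)}\,dx\to\int K|u|^{q(x)}\,dx$; combining $\langle\F'(u_n),u_n\rangle\to 0$, (H6), and $\langle\F'(u),u\rangle=0$ (since $u$ solves \eqref{MainEqu}) then yields $\mu(\R^N)=\int|\nabla u|^{p(x)}\,dx$, after which your argument goes through. Alternatively, feed the $L^{q(\cdot)}$ convergence and (H6) into $\langle\F'(u_n),u_n-u\rangle\to 0$ and use the $(S_+)$ property of $-\Delta_{p(x)}$. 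The paper's own proof simply stops at ``from which the result follows'' and does not address this step at all.
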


\begin{proof}

Let $(u_n)_n$ be a PS sequence for $\F$ of level $c$. Up to a
subsequence, we can assume that $(u_n)_n$ weakly converges to some
$u$. Let $\mu$, $\nu$, $\mu_i$, $\nu_i$, $\mu_\infty$, $\nu_\infty$
be as in the concentration-compactness principle  Theorem.
\ref{propCCP} when applied to $(u_n)_n$.
In view of the definition of $\mu$ and $\nu$ and assumption (H6),
\begin{eqnarray*}
 c & = &  \lim_{n\to\infty} \F(u_n) \\
 & = & \int_{\R^N} \frac{1}{p(x)}\,d\mu + \int_{\R^N}\frac{k(x)}{p(x)}|u|^{p(x)}\,dx - \int_{\R^N} \frac{1}{q(x)}\,d\nu \\
   &\ge &  \F(u) + \sum_i \Big\{\frac{\mu_i}{p(x_i)} - \frac{K(x_i)\nu_i}{p(x_i)^*}\Big\}
         +  \frac{\mu_\infty}{p(\infty)} - \frac{K(\infty)\nu_\infty}{q(\infty)}.
\end{eqnarray*}
Since $\F(u)\ge 0$, we deduce,  using \eqref{Cota}, \eqref{CotaInfinity} and \eqref{CotaInf1}-\eqref{CotaInf4}, that
\begin{equation*}
c  \ge \sum_{x\in \{x_i,\,i\in I\}\cup \{\infty\},\,K(x)>0}   \frac{1}{N} S_{x}^N K(x)^{1-\frac{N}{p(x)}}
 \end{equation*}
from which the result follows.
\end{proof}

A direct application of the Mountain-pass theorem combined with
proposition \ref{PropPSCond} then yields the following existence
condition:

\begin{teo}\label{teoMP} Assume (H1)-(H7). 
If there exists $v\in W^{1,p(\cdot)}(\R^N)$ such that
\begin{equation}\label{CCPCond}
 \sup_{t>0} \F(tv) < \inf_{x\in \A\cup\{\infty\},\, K(x)>0}
\frac1N S_x^N K(x)^{1-\frac{N}{p(x)}},
\end{equation}
then \eqref{MainEqu} has a non-trivial  solution.
\end{teo}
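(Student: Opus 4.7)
The plan is to apply the Mountain Pass Theorem to the functional $\F$ defined in \eqref{DefF}, using Proposition \ref{PropPSCond} to recover compactness of the resulting Palais-Smale sequence. First I would verify that $\F$ has the mountain pass geometry. The starting point $\F(0)=0$ is immediate. For the local minimum, I would use assumptions (H4) and (H5): the coercive norm \eqref{DefNorm} controls $\int |\nabla u|^{p(x)}+k|u|^{p(x)}\,dx$ from above and below, and the continuous Sobolev embedding $W^{1,p(\cdot)}(\R^N)\hookrightarrow L^{q(\cdot)}(\R^N)$ (valid since $q\le p^*$ on $\R^N$, $K$ bounded) together with the modular/norm comparison of Prop. \ref{norma.y.rho} yields, for $\|u\|$ small,
\begin{equation*}
\F(u)\ge \tfrac{1}{p^+}\min\{\|u\|^{p^-},\|u\|^{p^+}\}-C\max\{\|u\|^{q^-},\|u\|^{q^+}\}.
\end{equation*}
Since $q^->p^+$, there exist $\rho,\alpha>0$ with $\F(u)\ge\alpha$ whenever $\|u\|=\rho$.

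Next, using the function $v$ given by the hypothesis, I would observe that $\F(tv)\to-\infty$ as $t\to+\infty$. Indeed, for $t$ large the term $\int K|tv|^{q(x)}/q(x)\,dx$ dominates, growing at least like $t^{q^-}$ while the $p(x)$-terms grow at most like $t^{p^+}$, and $q^->p^+$. After rescaling $v$ if necessary so that $\|v\|>\rho$, we pick $e=t_1 v$ with $t_1$ large enough to have $\|e\|>\rho$ and $\F(e)\le 0$. Define the mountain pass level
\begin{equation*}
c:=\inf_{\gamma\in\Gamma}\max_{s\in[0,1]}\F(\gamma(s)),\qquad \Gamma:=\{\gamma\in C([0,1],W^{1,p(\cdot)}(\R^N)):\gamma(0)=0,\,\gamma(1)=e\}.
\end{equation*}
The geometry gives $c\ge\alpha>0$, and testing against the particular path $\gamma_0(s)=s\,e=st_1 v$ yields
\begin{equation*}
c\le \max_{s\in[0,1]}\F(s t_1 v)\le \sup_{t>0}\F(tv)<\inf_{x\in\A\cup\{\infty\},\,K(x)>0}\tfrac{1}{N}S_x^N K(x)^{1-N/p(x)}
\end{equation*}
by assumption \eqref{CCPCond}.

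By the Ambrosetti--Rabinowitz mountain pass theorem (in its deformation form that does not require PS a priori), there exists a Palais-Smale sequence $(u_n)\subset W^{1,p(\cdot)}(\R^N)$ at level $c$. By Proposition \ref{PropPSCond}, the strict inequality above places $c$ inside the range where $\F$ satisfies the PS condition, so up to a subsequence $u_n\to u$ strongly in $W^{1,p(\cdot)}(\R^N)$. Continuity of $\F$ and $\F'$ gives $\F(u)=c>0$ and $\F'(u)=0$, so $u$ is a weak solution of \eqref{MainEqu}. Since $\F(0)=0<c=\F(u)$, the solution $u$ is non-trivial.

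The only delicate point is ensuring the mountain pass geometry rigorously in the variable exponent setting, where the modular and the norm do not agree and one must split according to $\|u\|\lessgtr 1$ using Prop. \ref{norma.y.rho}; however, this is a routine adaptation of the constant-exponent argument once (H4) and (H5) are in hand. All the hard analytic work (localized Sobolev constants, CCP at infinity, compactness threshold) has already been accomplished in the previous sections, so the Mountain Pass/PS combination delivers the conclusion directly.
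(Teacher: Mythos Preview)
Your proposal is correct and follows essentially the same route as the paper: verify the mountain-pass geometry of $\F$ using (H4)--(H5) and the Sobolev embedding, use the given $v$ to build the endpoint and to bound the minimax level below the threshold of Proposition~\ref{PropPSCond}, and then apply the Mountain Pass Theorem together with that PS condition to obtain a non-trivial critical point. Your argument is in fact more detailed than the paper's own (rather terse) proof; the only implicit point worth making explicit is that the finiteness of $\sup_{t>0}\F(tv)$ in \eqref{CCPCond} forces $\int_{\R^N} K|v|^{q(x)}\,dx>0$, which is precisely what you need for $\F(tv)\to -\infty$.
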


\noindent Notice that if $\infty$ is subcritical, i.e.
$q(\infty)<p(\infty)^*$, then $S_\infty=0$ in view of prop.
\ref{SinftyPositive}. In that case the right hand side in
\eqref{CCPCond} becomes non-positive and the Theorem becomes
useless. The reason is that \eqref{CotaInf3} reduces to
$\mu_\infty,\nu_\infty\ge 0$ which gives no information about the
loss of mass at infinity. To solve this problem, the definition of
$S_\infty$ must be modified. The results we obtained in that case
will be presented elsewhere.

\begin{proof}
Notice first that $\F$ has the mountain-pass geometry. Indeed $\F(0)=0$ and, if $\|u\|_{W^{1,p(\cdot)}(\R^N)}=r$ is small enough, then
using the coercivity assumption (H5) and the Sobolev inequality, we have
$$ \F(u) \ge C\|u\|_{W^{1,p(\cdot)}(\R^N)}^{p^+} - C\max\{\|u\|_{q(\cdot)}^{q^+},\|u\|_{q(\cdot)}^{q^-}\}
         \ge Cr^{p^+} - Cr^{q^-} $$
which is positive by (H4).

Let $v\in W_0^{1,p(x)}(\R^N)$ satisfying \eqref{CCPCond}.
It easily follows from (H4) that $\F(tu)<0$ for $t>0$ big.
%\begin{comment}
%Indeed
%\begin{equation*}
%\begin{split}
% \F(tu) & = \int_{\R^N}\frac{t^{p(x)}}{p(x)}\left(|\nabla u|^{p(x)}+\lambda k(x)|u|^{p(x)}\right)\,dx
%           - \int_{\R^N}\frac{t^{q(x)}}{q(x)}|u|^{q(x)}K(x)\,dx \\
%       & \le t^{p^+}  \int_{\R^N}\left(|\nabla u|^{p(x)}+\lambda k(x)|u|^{p(x)}\right)\,dx- t^{q^-}\int_{\R^N} \frac{1}{q(x)}|\nabla u|^{q(x)}K(x)\,dx,
%\end{split}
%\end{equation*}
%which tends to $-\infty$ as $t\to +\infty$ since $q^->p^+$.
%\end{comment}
In view of proposition \ref{PropPSCond}, we can then apply the
Mountain-pass theorem to obtain a critical point of $\F$.
\end{proof}

\begin{prop}
The infimum in the r.h.s. of \eqref{CCPCond} is attained at some point of $\A\cup\{\infty\}$.
\end{prop}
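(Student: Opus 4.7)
The plan is to show that any minimizing sequence for
$f(x) := \tfrac{1}{N}\,S_x^N K(x)^{1-N/p(x)}$
on $D := \{x \in \A \cup \{\infty\} : K(x)>0\}$ converges, along a subsequence, to a point of $D$ at which $f$ attains the infimum. The argument rests on two ingredients: precompactness of minimizing sequences (essentially from the structure of the weight $K^{1-N/p}$) and lower semicontinuity of $x \mapsto S_x$.

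Let $m := \inf_D f$ (which I may assume finite) and pick $(x_n)\subset D$ with $f(x_n)\to m$. Since $p^+<N$, the exponent $1-N/p(x)=-(N-p(x))/p(x)$ is strictly negative and bounded away from zero, so $K(x_n)^{1-N/p(x_n)}\to +\infty$ whenever $K(x_n)\to 0^+$. Because $S_x$ is uniformly bounded above (by $K(N,p(x))^{-1}$ from Proposition \ref{PropIneq} and continuity of $p$) and $m<+\infty$, this forces $\liminf_n K(x_n)>0$. If $(x_n)$ has a bounded subsequence, I extract $x_n\to x_*\in\R^N$; closedness of $\A$, which follows from continuity of $q$ and of $p^*$, puts $x_*\in\A$, and continuity of $K$ at $x_*$ yields $K(x_*)>0$, so $x_*\in D$. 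Otherwise I extract $x_n\to\infty$, and (H7) together with the lower bound on $K(x_n)$ gives $K(\infty)>0$, so $\infty\in D$.

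The key step is the lower semicontinuity of $x\mapsto S_x$ on $\A\cup\{\infty\}$. For $x_*\in\A$, fix $\eps>0$. Whenever $|x-x_*|<\eps/2$ and $\delta<\eps/2$ one has $B_x(\delta)\subset B_{x_*}(\eps)$, so the Rayleigh quotient defining $S(p(\cdot),q(\cdot),\cdot)$ is minimized over a smaller class, giving
$$ S(p(\cdot),q(\cdot),B_x(\delta)) \ge S(p(\cdot),q(\cdot),B_{x_*}(\eps)). $$
Taking the supremum in $\delta$ via \eqref{DefLocCste} yields $S_x\ge S(p(\cdot),q(\cdot),B_{x_*}(\eps))$, and letting $\eps\to 0$ produces $\liminf_{x\to x_*}S_x\ge S_{x_*}$. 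The same inclusion argument, with $\R^N\setminus B_R$ playing the role of $B_{x_*}(\eps)$ and \eqref{DefSinf} replacing \eqref{DefLocCste}, gives $\liminf_n S_{x_n}\ge S_\infty$ whenever $x_n\to\infty$.

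Combining with continuity of $K$ and $p$ (so that $K(x_n)^{1-N/p(x_n)}\to K(x_*)^{1-N/p(x_*)}$, valid because $K(x_*)>0$), I conclude $f(x_*)\le\liminf_n f(x_n)=m$, and since $x_*\in D$ this forces $f(x_*)=m$. The main obstacle is precisely the semicontinuity step: $S_x$ is itself a sup-limit of Sobolev constants over shrinking neighborhoods, so one must verify that the ball inclusions give the correct direction of monotonicity and that the two nested limits can be swapped with the limit in $x$; the remaining pieces are purely continuity facts.
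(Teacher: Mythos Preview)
Your approach matches the paper's: the core is the lower semicontinuity of $x\mapsto S_x$ on $\A\cup\{\infty\}$, proved exactly as you do via ball inclusions $B_{x_n}(\delta)\subset B_{x_*}(\eps)$ (and $B_{x_n}(\delta)\subset\R^N\setminus B_R$ at infinity). The paper's own proof is terser and leaves the continuity of $K$, $p$ and the handling of points where $K$ vanishes implicit, so your extra care is appropriate.

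There is one slip to fix. In the second paragraph you argue that $\liminf_n K(x_n)>0$ because ``$S_x$ is uniformly bounded above''. That is the wrong direction: an upper bound on $S_{x_n}$ gives no lower bound on $f(x_n)=\tfrac1N S_{x_n}^N K(x_n)^{1-N/p(x_n)}$, so it cannot rule out $K(x_n)\to 0$. What you actually need is that $S_{x_n}$ stays bounded \emph{below} by a positive constant along the sequence. The clean way to obtain this is to reorder your argument: first extract the convergent subsequence $x_n\to x_*$ (finite or $\infty$), then invoke the lower semicontinuity you prove afterwards to get $\liminf_n S_{x_n}\ge S_{x_*}>0$ (positivity of $S_{x_*}$ at a finite $x_*\in\A$ is immediate from the Sobolev embedding on any fixed ball, and at $\infty$ it is Proposition~\ref{SinftyPositive}). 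With $S_{x_n}$ bounded below, $K(x_n)\to 0$ would force $f(x_n)\to+\infty$, contradicting $m<\infty$; hence $K(x_*)>0$ and $x_*\in D$. All the pieces are present in your write-up---just let the semicontinuity step supply the lower bound on $S_{x_n}$, rather than Proposition~\ref{PropIneq} (which concerns only $S_\infty$ and only gives an upper bound anyway).
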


\begin{proof}
We first prove that the function $x\in\A\to S_x$ is lower semi-continuous. 
Consider $x_0\in\A$, $(x_n)_n\subset \A$ such that $x_n\to x_0$ and fix some $\eps>0$. 
There exists $n(\eps)\in \N$ such that  $B_{x_n}(\eps/3)\subset B_{x_0}(\eps)$ for $n\ge n(\eps)$. 
It follows that 
$$ S(p(\cdot),q(\cdot),B_{x_0}(\eps))\le S(p(\cdot),q(\cdot),B_{x_n}(\eps/3)) \le S_{x_n} $$ 
for $n\ge n(\eps)$. Then $\liminf_{n\to +\infty} S_{x_n} \ge  S(p(\cdot),q(\cdot),B_{x_0}(\eps))$ for any $\eps$. 
Letting $\eps\to 0$ gives $\liminf_{n\to +\infty} S_{x_n} \ge S_{x_0}$. 

To prove the proposition, it thus suffices to prove that this function is
also lower semi-continuous at infinity in the sense that for any
sequence $x_n\in\A$ such that $|x_n|\to +\infty$, there holds
$\liminf_{n\to +\infty} S_{x_n}\ge S_\infty$. Fix some $R>0$ and
then $n_0\in \mathbb{N}$ such that $|x_n|\ge R+1$. Then for $n\ge
n_0$, $B_{x_n}(\eps)\subset \R^N\backslash B_R$ for any $\eps<1$. It
follows that for such $n$ and $\eps$,
$S(p(\cdot),q(\cdot),B_{x_n}(\eps))\ge
S(p(\cdot),q(\cdot),\R^N\backslash B_R)$. Taking the limit in $\eps$
and then in $n$ gives $\liminf_{n\to +\infty} S_{x_n}\ge
S(p(\cdot),q(\cdot),\R^N\backslash B_R)$. We obtain the result
taking the limit $R\to +\infty$.
\end{proof}

Assume that the infimum in the right hand side of \eqref{CCPCond} is
attained at a point $x_0\in\A$. According to the result in
\cite{FBSS2}, the condition \eqref{CCPCond} will hold for some $v$
if $p$ and $q$ are $C^2$ in a neighborhood of $x_0$, $p$ (resp. $q$)
has a local minimum (resp. maximum) at $x_0$ and at least one of the
hessian matrix $D^2p(x_0)$ or $D^2q(x_0)$ is non-zero. We will now
see that the same kind of result holds if the infimum in the right hand side of \eqref{CCPCond} is attained at infinity. 
To do this we need the following definition

\begin{defi}\label{DefiC2Infinity}
 We say that a function $f:\R^N\to \R$ has a Taylor expansion of order 2 at $\infty$, 
or equivalently that $f$ is $C^2$ at infinity, 
if there exists $f(\infty),a_1,..,a_N\in \R$ and  $a_{i,j}\in \R$, $i,j=1,..,N$, such that
\begin{eqnarray}\label{Taylorp}
 f(x) = f(\infty) +\sum_{i=1}^N a_i\frac{x_i}{|x|^2} + \frac{1}{2}\sum_{i,j=1}^N a_{i,j}\frac{x_ix_j}{|x|^4} + o\Big(\frac{1}{|x|^2}\Big),\qquad |x|\gg 1.
\end{eqnarray}
We  say that $a_1,..,a_N$ are the first order partial derivatives of $f$ at $\infty$ and that the $a_{i,j}$ are the second-order partial derivatives of $f$ at $\infty$. \\
We  denote $\p_if(\infty):=a_i$, $\nabla f(\infty):=(\p_1f(\infty),...,\p_N f(\infty))$ the gradient of $f$ at $\infty$,
$\p_{ij}f(\infty):=a_{i,j}$
and $D^2f(\infty):=(\p_{ij}f(\infty))_{i,j=1,..,N}\in \R^{N\times N}$ the Hessian matrix of f at $\infty$.
\end{defi}

\noindent Notice that the usual algebraic rules of computations of partial derivatives apply for the partial derivatives at $\infty$.

This definition is justified by the following argument.
Denoting by $\Phi_N:\mathbb{S}^N-\{P\}\to \R$ the stereographic projection from the north pole $P$, 
we can define $\tilde f:\mathbb{S}^N\to \R$ by
$\tilde f=f\circ\Phi_N$. Then, if there exists $f(\infty):=\lim_{|x|\to\infty}f(x)$, $\tilde f$ is continuous on $\mathbb{S}^N$, and studying $f$ near $\infty$ is equivalent to study $\tilde f$ near $P$.
To do so we consider the chart $(\mathbb{S}^N-\{S\},\Phi_S)$ around $P$, where $\Phi_S$ is the stereographic projection from the south pole $S$. Then, in this chart, assuming $\tilde f$ of class $C^2$, we can write a Taylor expansion of 
$\tilde f$ at $0=\Phi_S(N)$:
\begin{eqnarray}\label{tildef}
 \tilde f(x)=a_0+\sum_{i=1}^N a_ix_i+\frac{1}{2}\sum_{i,j=1}^N  a_{ij}x_ix_j+o(|x|^2),\qquad x\in\mathbb{S}^N \text{ close to } P,
\end{eqnarray}
where $x_i$ is the i-th component of $\Phi_S(x)$.
Notice that $a_i=\p_i\tilde f(0)$ and $a_{ij}=\p_{ij}\tilde f(0)$.
Then \eqref{tildef} can be rewritten as
$$ f(x)=a_0+\sum_{i=1}^N a_i(\Phi_S(\Phi_N^{-1}(x)))_i+\frac{1}{2}\sum_{i,j=1}^Na_{ij}(\Phi_S(\Phi_N^{-1}(x)))_i(\Phi_S(\Phi_N^{-1}(x)))_j+o(|\Phi_S(\Phi_N^{-1}(x))|^2) $$
for $x\in \R^N,\,|x|\gg 1$.
Since $\Phi_S(\Phi_N^{-1}(x))=\frac{x}{|x|^2}$, we see that \eqref{tildef} is equivalent to \eqref{Taylorp}.

Assume that $f$ has a limit at $\infty$ in the sense that there
exists $f(\infty):=\lim_{|x|\to\infty}f(x)$. We say that $f$ has a
local minimum (res[. maximum) at $\infty$ if $f(x)\ge f(\infty)$
(resp. $f(x)\le f(\infty)$) for $|x|\gg 1$. If this is the case and
if moreover $f$ has a Taylor expansion of order 2, then $\nabla
f(\infty)=0$ and the hessian matrix $D^2f(\infty)$ is nonnegative
(resp. non-positive).

We can now state our existence result for equation \eqref{MainEqu}.

\begin{teo}\label{CSexistenciaInfinity}
Assume that assumptions (H1)-(H7) holds and also that
\begin{itemize}
\item $p(\infty)<\sqrt{N}$, 
%\item $k$ has a limit at infinity,
%\item the infimum in the rhs of \eqref{CCPCond} is attained at $\infty$,
\item $p$ (resp. $q$) has a local minimum (resp. maximum) at every point of $\A\cup\infty$,
\item  $p$ and $q$ are $C^2$ at any point of $\A\cup\infty$ and  for any $x_0\in \A\cup\infty$,
 at least one of the  hessian matrices $D^2p(x_0)$ or $D^2p(x_0)$ is non-zero.
\end{itemize}
Then \eqref{CCPCond} holds. In particular equation \eqref{MainEqu} has a non-trivial solution.
\end{teo}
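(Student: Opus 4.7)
By Theorem \ref{teoMP}, it suffices to exhibit $v \in W^{1,p(\cdot)}(\R^N)$ satisfying the strict inequality \eqref{CCPCond}. The preceding proposition ensures that the infimum on the right-hand side of \eqref{CCPCond} is attained at some point $x_0 \in \A \cup \{\infty\}$ with $K(x_0) > 0$. Two cases arise. If $x_0 \in \A$ is a finite critical point, the test-function construction of \cite{FBSS2} (Talenti instantons centered at $x_0$ truncated by a fixed bump function) applies directly and produces the required $v$, using the hypotheses that $p$ (resp.\ $q$) has a local minimum (resp.\ maximum) at $x_0$ with non-zero Hessian. The essentially new case to treat is $x_0 = \infty$.

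\textbf{Test function at infinity.} For $x_0 = \infty$, the plan is to mimic the classical construction but concentrating at infinity, in the spirit of the rescaling used in the proof of Proposition \ref{PropIneq}. Let $U$ be an extremal of the constant-exponent Sobolev inequality \eqref{BestSobCste} for $r = p(\infty)$, smoothly truncated outside a large ball, and, for $\lambda > 0$ small, set
$$ v_\lambda(x) = \lambda^{-(N-p(\infty))/p(\infty)} \, U\left(\frac{x - x_\lambda}{\lambda}\right), $$
where $x_\lambda \in \R^N$ is chosen with $|x_\lambda| \to \infty$ and $|x_\lambda|\lambda \to \infty$ as $\lambda \to 0$ (for instance $|x_\lambda| = 1/\lambda$). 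Then $v_\lambda$ has compact support contained in $\R^N \setminus B_R$ for $\lambda$ small, and by Theorem \ref{PropIgaldad} the threshold in \eqref{CCPCond}, when attained at $\infty$, equals $\frac{1}{N} K(N,p(\infty))^{-N} K(\infty)^{1 - N/p(\infty)}$. The goal is to establish
$$ \sup_{t > 0} \F(t v_\lambda) < \frac{1}{N} K(N,p(\infty))^{-N} K(\infty)^{1 - N/p(\infty)} \qquad \text{for $\lambda$ small.} $$

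\textbf{Key estimates.} Changing variables to $y = (x - x_\lambda)/\lambda$ and using (H1)--(H2) together with the Taylor expansion \eqref{Taylorp} of $p$ and $q$ at $\infty$ and the continuity of $K$ there, the integrals $\int |\nabla v_\lambda|^{p(x)}\,dx$ and $\int K |v_\lambda|^{q(x)}\,dx$ expand around the classical Talenti values for $U$. To leading order $\sup_{t>0}\F(t v_\lambda)$ reproduces the threshold exactly; the lower-order potential term $\int k |v_\lambda|^{p(x)}\,dx$ is absorbed as an error. The strict inequality then reduces to showing that the second-order Taylor corrections generated by the non-constancy of $p$ and $q$ at infinity contribute with the right (negative) sign. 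Precisely here the local min/max hypotheses combined with the non-vanishing of one of the Hessians $D^2 p(\infty)$, $D^2 q(\infty)$ are crucial: they force a correction of logarithmic order (of size $\lambda^2 \log(1/\lambda)$, or equivalently $|x_\lambda|^{-2}\log|x_\lambda|$, depending on the normalization) with a favourable sign that dominates the remaining errors.

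\textbf{Main obstacle.} The technical heart of the argument is the careful book-keeping of the error terms in the variable-exponent expansions, carried out against the Talenti profile $U(y) \sim |y|^{-(N-p(\infty))/(p(\infty)-1)}$ and its gradient. The hypothesis $p(\infty) < \sqrt{N}$ enters at this stage precisely to guarantee integrability against $U$ of the moment and logarithmic integrals produced by the second-order terms of \eqref{Taylorp} (the same technical restriction that appears at finite critical points in \cite{FBSS2}). These lengthy but in principle routine computations are relegated to Appendix B, as announced in the introduction. They yield the strict inequality \eqref{CCPCond}, and Theorem \ref{teoMP} then produces a non-trivial solution of \eqref{MainEqu}.
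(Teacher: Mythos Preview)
Your approach is essentially the paper's: concentrate truncated Talenti profiles at points $x_\lambda\to\infty$, expand $\F(tv_\lambda)$ via the Appendix~B estimates, and obtain strictness from the second-order Taylor data of $p$ and $q$ at infinity together with the min/max hypothesis.  Two points of detail deserve care, however.  First, the relation between the scale $\lambda$ and the center $|x_\lambda|$ is more delicate than ``$|x_\lambda|=1/\lambda$'' (which in fact does not satisfy your own stated condition $|x_\lambda|\lambda\to\infty$): the paper requires $|\ln\lambda|\ll |x_\lambda|\ll \lambda^{-1/2}$ (and further upper bounds, see \eqref{HypR}) so that the leading correction $\frac{\ln\lambda}{|x_\lambda|^2}$ dominates all remaining errors, in particular the $O(\lambda^{p(\infty)})$ contribution of the $k$-term and the cutoff errors.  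Second, the centers are not arbitrary but must lie along a fixed direction $\nu$, $x_\lambda=r_\lambda\nu$, chosen so that $(D^2p(\infty)\nu,\nu)>0$ or $(D^2q(\infty)\nu,\nu)<0$; this is precisely where the non-vanishing Hessian hypothesis enters to make $J_1(t_0)$ strictly of the right sign.  With these two adjustments your sketch matches the paper's proof.
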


\begin{proof}
As we said above we only need to verify that \eqref{CCPCond} holds
when the infimum in the right hand side of \eqref{CCPCond} is
attained at $\infty$. 

In view of our assumption we can apply
Theorem. \ref{PropIgaldad} to obtain
$$ S_\infty = K(N,p(\infty))^{-1}. $$
It is well-known that the extremals for $K(N,p(\infty))^{-1}$ are
the of the form $U_{\lambda, x_0}(x) :=
\lambda^{-\frac{n-p(\infty)}{p(\infty)}} U(\tfrac{x-x_0}{\lambda})$,
$x_0\in\R^n$, $\lambda>0$, where
$U(x)=(1+|x|^\frac{p}{p-1})^\frac{p-n}{p}$. 
Given some direction $\nu\in\R^N$, $|\nu|=1$, that will be specified later, 
and positive real numbers $r_\eps$ satisfying \eqref{HypR}, 
consider the function $u_\eps$ defined by \eqref{DefTestFunction} in Appendix B. 
Letting $C:=K(n,p(\infty))^{-\frac{n}{q(\infty)}}\|U\|_{q(\infty)}^{-1}$, it is easilt seen that 
$W:=CU$ satisfies $-\Delta_p W=W^{q(\infty)-1}$. We then consider
$w_\eps(x) := C u_\eps(x)$ as test-function in place of $v$ in \eqref{CCPCond}. 
In view of the definition of $\F$ in \eqref{DefF}
we then have
$$ J_\eps(t):=\F(tw_\eps) 
=\int_{\R^N} g(x,t)|\nabla u_\eps|^{p(x)}\,dx + \int_{\R^N} h(x,t) |u_\eps|^{p(x)}\,dx - \int_{\R^N} f(x,t) |u_\eps|^{q(x)}\,dx  $$
with $g(x,t)=\frac{(tC)^{p(x)}}{p(x)}$, $f(x,t)= \frac{(tC)^{q(x)}K(x)}{q(x)}$, $h(x,t)=\frac{(tC)^{p(x)}k(x)}{p(x)}$.
For ease of notation we let $p:=p(\infty)$ and $q:=q(\infty)$.
Consider 
$$ J_0(t):=\frac{(tC)^p}{p} \|\nabla U\|_p^p  -  \frac{(tC)^q K(\infty)}{q} \|U\|_q^q
 = K(N,p)^{-n}\Big(\frac{t^p}{p}-\frac{t^q}{q}K(\infty)\Big) $$
and
\begin{eqnarray*}
 J_1(t) & := & \frac{n(tC)^qK(\infty)}{2q^2}A(q,\nu)\|U\|_q^q - \frac{n(tC)^p}{2p^2}A(p,\nu)\|\nabla U\|_p^p \\
& = & \frac{n}{2}K(N,p)^{-n}\Big(\frac{t^q}{q^2}K(\infty) A(q,\nu) - \frac{t^p}{p^2}A(p,\nu) \Big).
\end{eqnarray*}
where  
$$A(p,\nu):= \sum_{i,j=1}^n \p_{i,j}p(\infty) \nu_i\nu_j = (D^2p(\infty)\nu,\nu)$$ 
and 
$$A(q,\nu):= \sum_{i,j=1}^n \p_{i,j}q(\infty) \nu_i\nu_j = (D^2q(\infty)\nu,\nu).$$ 
Using prop. \ref{PropAsymptq}, prop. \ref{PropAsymptGrad} and prop. \ref{PropAsymptp} in Appendix B, 
%noticing that in view of \eqref{HypR}, the $x_\eps$ appearing in the definition of $u_\eps$ in \eqref{DefTestFunction} satisfies
%$\eps^{p(\infty)}\ll 1/|x_\eps|^2$, 
we then obtain that
\begin{equation}\label{AsimptotF}
 J_\eps(t) =  J_0(t) + J_1(t)\frac{\ln\,\eps}{|x_\eps|^2} + o\Big( \frac{\ln\,\eps}{|x_\eps|^2} \Big)
\end{equation}
$C^1$ uniformly in $t$ for $t$ in a compact subset of $[0,+\infty)$. 
Observe that $J_0$ attains its maximum at $t_0:=K(\infty)^\frac{1}{p-q}$ with $J_0(t_0)= \frac1n K(n,p)^{-n} K(\infty)^{-\frac{n-p}{p}}$.
Moreover this is a non-degenerate maximum since
$J_0''(t_0)=(p-q)K(n,p)^{-n}K(\infty)^\frac{2-p}{q-p}\neq 0$. It follows that $J_\eps$ attains its maximum
at $t_\eps = t_0+a\frac{\ln\,\eps}{|x_\eps|^2} + o\Big( \frac{\ln\,\eps}{|x_\eps|^2} \Big) $
with $a=-\frac{J_1'(t_0)}{J_0''(t_0)}$.
%Since $J_1'(t_0) = \frac{n}{2} K(n,p)^{-n}K_\infty^\frac{p-1}{p-q}\Big( \frac1q A(q,\nu) - \frac1p A(p,\nu)\Big)$, we have
%$a = \frac{n}{2(q-p)} K_\infty^\frac{1}{p-q}\Big( \frac1q A(q,\nu) - \frac1p A(p,\nu)\Big)$.
We thus obtain
\begin{eqnarray*}
 \sup_{t\ge 0} J_\eps(t) &=&  \sup_{t\ge 0} \F(w_\eps) = J_\eps(t_\eps)
 = J_0(t_0) + J_1(t_0) \frac{\ln\,\eps}{|x_\eps|^2} + o\Big( \frac{\ln\,\eps}{|x_\eps|^2} \Big)
\end{eqnarray*}
where 
$$J_1(t_0) = \frac{n}{2} K(n,p)^{-n} K(\infty)^\frac{p}{p-q} \Big( \frac{1}{q^2}A(q,\nu) - \frac{1}{p^2}A(p,\nu)\Big).$$ 
For  the sufficient existence condition \eqref{CCPCond} to hold, it thus suffices to take $v=w_\eps$ with $\eps$ small if the strict inequality
$J_1(t_0)<0$ holds. Since $q$ has a local maximum at infinity and $p$ a local minimum we already know that $J_1(t_0)\le 0$ for any direction $\nu$.
For the strict inequality to hold it thus suffices to assume that there exists a direction $\nu$ such that 
$A(p,\nu)>0$ or $A(q,\nu)<0$ i.e. to assume that
one of the hessian matrices $D^2p(\infty)$ or $D^2q(\infty)$ is non-zero.
\end{proof}

\section*{Appendix A: a compact embedding from $D^{1,p(\cdot)}(\R^N)$ into $L^{p(\cdot)}(\R^N,k\,dx)$. }

In this section, we give a sufficient condition for assumption (H6) to hold i.e. to have a compact embedding 
from  $D^{1,p(\cdot)}(\R^N)$, the closure of $C^\infty_c(\R^N)$ for the norm $\|\nabla u\|_{L^{p(\cdot)}(\R^N)}$, into
the weighted Lebesgue space $L^{p(\cdot)}(\R^N,k\,dx)$. Such a result was 
proved in the constant-exponent framework in \cite{Chabrowski}[lemma 1]. 
We adapt  his proof  to the variable exponent setting.

\begin{prop}\label{Chab}
Consider  a  measurable function $s:\R^N\to [1,+\infty]$ such that $s\ge p$ and $\inf_{\R^N} p^*-s >0$.  
Assume that $k \in L^1(\R^N)\cap L^\frac{s(\cdot)}{s(\cdot)-p(\cdot)}_{loc}(\R^N)$ is non-negative and satisfies 
$$ \lim_{|x|\to\infty}\|k\|_{L^\frac{s(\cdot)}{s(\cdot)-p(\cdot)}(Q(x,l))}=0 $$
for some $l>0$. Here $Q(x,l)$ denotes the cube centered at $x$ with sides of length $l$ parallel to the axis.
Then $D^{1,p(\cdot)}(\R^N)$ is compactly embedded into $L^{p(\cdot)}(\R^N,k\,dx)$.
\end{prop}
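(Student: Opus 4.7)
The plan is to adapt to the variable-exponent setting the classical argument of Chabrowski referred to in the statement. Starting from a bounded sequence $(u_n) \subset D^{1,p(\cdot)}(\R^N)$, reflexivity gives a weakly convergent subsequence $u_n \rightharpoonup u$; after replacing $u_n$ by $u_n-u$ one may reduce to the case $u\equiv 0$. The goal becomes showing that the modular $\int_{\R^N} k(x)|u_n|^{p(x)}\,dx$ tends to zero, which by Proposition \ref{norma.y.rho} is equivalent to strong convergence of $u_n$ to $0$ in $L^{p(\cdot)}(\R^N,k\,dx)$.

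The core of the proof is a splitting $\R^N = B_R \cup (\R^N\setminus B_R)$, with each piece handled differently. On the exterior region I would cover $\R^N\setminus B_R$ by a countable family of cubes $Q_i := Q(x_i,l)$ of the prescribed side length $l$ with uniformly bounded overlap. On each $Q_i$, H\"older's inequality \eqref{HolderIneq} applied with conjugate exponents $s(\cdot)/p(\cdot)$ and $s(\cdot)/(s(\cdot)-p(\cdot))$ yields
$$ \int_{Q_i} k|u_n|^{p(x)}\,dx \le C\, \bigl\| |u_n|^{p(\cdot)}\bigr\|_{L^{s(\cdot)/p(\cdot)}(Q_i)} \|k\|_{L^{s(\cdot)/(s(\cdot)-p(\cdot))}(Q_i)}, $$
with a constant $C$ depending only on $(s/p)^+$ and $(s/(s-p))^+$, which are uniformly bounded thanks to $p^->1$, $p^+<N$ and $\inf(p^*-s)>0$. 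Using the strict subcriticality $s<p^*$ together with the Sobolev embedding on cubes of fixed side, the first factor should be controlled uniformly in $i$ by $\|\nabla u_n\|_{L^{p(\cdot)}(\R^N)}$, and summing over $i$ together with the decay hypothesis $\|k\|_{L^{s(\cdot)/(s(\cdot)-p(\cdot))}(Q(x,l))}\to 0$ as $|x|\to\infty$ gives
$$ \sup_{n\in\N} \int_{\R^N\setminus B_R} k|u_n|^{p(x)}\,dx \longrightarrow 0 \qquad \text{as } R\to\infty. $$

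On the fixed ball $B_R$, the compact Sobolev embedding $W^{1,p(\cdot)}(B_R)\hookrightarrow L^{p(\cdot)}(B_R)$ provides $u_n\to 0$ strongly in $L^{p(\cdot)}(B_R)$, and one more application of H\"older's inequality with $k\in L^{s(\cdot)/(s(\cdot)-p(\cdot))}_{loc}(\R^N)$ gives $\int_{B_R} k|u_n|^{p(x)}\,dx \to 0$ as $n\to\infty$. Combining the two estimates finishes the proof.

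The main technical obstacle is the uniform Sobolev estimate on each cube: one needs $\|u_n\|_{L^{s(\cdot)}(Q_i)} \le C \|u_n\|_{W^{1,p(\cdot)}(Q_i)}$ with $C$ independent of the position $x_i$. This should follow from the Log-H\"older continuity of $p$ and the strict subcriticality $\inf(p^*-s)>0$ via an essentially translation-invariant argument on cubes of fixed side, but it has to be carried out carefully since in general the embedding constants in the variable-exponent setting depend on the local behaviour of the exponents; the uniform modulus of continuity (H1) and the boundedness of $p$ and $s$ on $\R^N$ are what make this feasible. Once this uniform local embedding is established, the rest is a routine modular computation.
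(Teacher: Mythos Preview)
Your outline has a genuine gap at the summation step on the exterior region. From
\[
\int_{Q_i} k|u_n|^{p(x)}\,dx \le C\,\bigl\||u_n|^{p(\cdot)}\bigr\|_{L^{s(\cdot)/p(\cdot)}(Q_i)}\,\|k\|_{L^{s(\cdot)/(s(\cdot)-p(\cdot))}(Q_i)}
\]
you propose to bound the first factor uniformly in $i$ by $\|\nabla u_n\|_{L^{p(\cdot)}(\R^N)}$ and then ``sum over $i$ together with the decay hypothesis''. But a uniform global bound on the first factor only yields a bound of the form $C\eta$ on each of infinitely many cubes; the decay $\|k\|_{L^{s/(s-p)}(Q_i)}\to 0$ says nothing about $\sum_i \|k\|_{L^{s/(s-p)}(Q_i)}<\infty$, so the sum need not be small (or even finite). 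If instead you bound the first factor by the local $W^{1,p(\cdot)}(Q_i)$-norm, two issues arise: (a) the Sobolev inequality on $Q_i$ (no zero boundary data) produces a term $\|u_n\|_{L^{p(\cdot)}(Q_i)}$, and functions in $D^{1,p(\cdot)}(\R^N)$ need not lie in $L^{p(\cdot)}(\R^N)$, so these do not sum; (b) even for the gradient piece, summing local norms raised to cube-dependent powers $p_{Q_i}^\pm$ back to the global norm is not automatic in the variable-exponent setting. Note also that your sketch never uses the hypothesis $k\in L^1(\R^N)$, which is a warning sign.

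The paper handles exactly these two points. It splits $f=(f-f_{Q_l})+f_{Q_l}$ on each cube. The oscillation part is controlled by Sobolev--Poincar\'e, $\|f-f_{Q_l}\|_{L^{s(\cdot)}(Q_l)}\le C\|\nabla f\|_{L^{p(\cdot)}(Q_l)}$, giving $\int_{Q_l}|f-f_{Q_l}|^{p(\cdot)}k\le C\eta\,\|\nabla f\|_{L^{p(\cdot)}(Q_l)}^{p_l^-}$; the sum $\sum_l \|\nabla f\|_{L^{p(\cdot)}(Q_l)}^{p_l^-}$ is then compared to $\|\nabla f\|_{L^{p(\cdot)}(\R^N)}$ via the Nekvinda--H\k{a}st\"o sequence-space identification $\|g\|_{L^{p(\cdot)}(\R^N)}\approx \|(\|g\|_{L^{p(\cdot)}(Q_l)})_l\|_{\ell^{(p_l^-)}}\approx \|(\cdot)\|_{\ell^{p(\infty)}}$, which requires the log-H\"older decay of $p$ at infinity. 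The mean-value part uses $|f_{Q_l}|\le C$ (from the global embedding $D^{1,p(\cdot)}\hookrightarrow L^{p^*(\cdot)}$) so that $\int_{Q_l}|f_{Q_l}|^{p(\cdot)}k\le C\int_{Q_l}k$, and this sums precisely because $k\in L^1(\R^N)$. Your ``main technical obstacle'' (a uniform local Sobolev constant) is real but secondary; the decisive missing ingredients are the Poincar\'e split, the use of $k\in L^1$ for the means, and the Nekvinda--H\k{a}st\"o lemma for summing the local gradient norms.
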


\noindent The proof follows closely the proof \cite{Chabrowski}[lemma 1] with the following useful trick due to P. Hasto \cite{Hasto}[Thm 2.4] in the final step.
Given a sequence $(q_j)_j$ in $(1,+\infty)$, consider the space $\ell^{(q_j)}$ of sequences $x=(x_j)_j$
such that $\rho(x):=\sum_j |x_j|^{q_j}<\infty$. This space is endowed with the norm
$ \|x\|_{\ell^{(q_j)} }:= \inf\,\{\lambda>0 \mbox{ s.t. } \rho(x/\lambda)\le 1\}$. A. Nekvinda \cite{Nekvinda}[Thm 4.3] proved that
if there exists $q(\infty)$ such that $|q_j-q(\infty)|\le C/\ln\,(e+j)$ then $\ell^{(q_j)}\approx \ell^{q(\infty)}$.
Using this result P. Hasto \cite{Hasto}[Thm 2.4] proved   that given a partition of $\R^N$ by  cubes $Q_l$, $l\ge 1$, of same length satisfying
\begin{equation}\label{CondCube}
 dist(Q_k,0)<dist(Q_l,0) \qquad \mbox{ if } k<l,
\end{equation}
and if $p$ is log-Holder continuous exponent such that
\begin{equation}\label{CondpHasto}
|p(x)-p(\infty)|\le C/\ln\,(e+|x|) \qquad \mbox{ for all } x\in\R^N,
\end{equation}
then
\begin{equation}\label{ResultadoHasto}
 \|g|_{L^{p(\cdot)}(\R^N)}\approx \| (\|g\|_{L^{p(\cdot)}(Q_j)})_j \|_{\ell^{(q_j)}}
\approx \| (\|g\|_{L^{p(\cdot)}(Q_j)})_j \|_{\ell^{p(\infty)}},
\end{equation}
where $q_j=p_{Q_j}^+$. Actually the first $\approx$ follows from the first part of the proof of \cite{Hasto}[Thm 2.4]. It is easily seen that we can take $q_j$ of the form $q_j=p(x_j)$ where $x_j\in Q_j$. In particular we will use this result with
$q_j=p_{Q_j}^-$.

\begin{proof}
Without loss of generality we may assume that $l=1$. We first verify that it is sufficient to show that for every $\delta>0$  there exists $j>0$ such that
\begin{equation}\label{comp}
\|f-f\chi_{Q(0,j)}\|_{L^{p(\cdot)}(\R^N,k\,dx)}<\delta
\end{equation}
for all $f\in D^{1,p(\cdot)}(\R^N)$ such that $\|\nabla f\|_{L^{p(\cdot)}(\R^N)}\leq1$.
Here $\chi_{Q(0,j)}$ denotes the characteristic function of the cube $Q(0,j)$.
In fact, assume that \eqref{comp} holds and take  a bounded sequence $(f_n)_n$ in $D^{1,p(\cdot)}(\R^N)$.
Since $s$ is assumed to be strictly subcritical, we can assume that there exists
$f\in D^{1,p(\cdot)}(\R^N)$ such that $f_m\to f$ in $L^{s(\cdot)}(Q(0,R))$ for any $R>0$ and
$\nabla f_m\rightharpoonup \nabla f$ in $L^{p(\cdot)}(\R^N)$.
It follows from \eqref{comp} that
$$ \|f_m-f\|_{L^{p(\cdot)}(\R^N-Q(0,j),k\,dx)}<2\delta.  $$
On the other hand  we have using H\"older inequality that
\begin{align*}
\int_{Q(0,j)}|f_m-f|^{p(\cdot)}k\,dx
&\leq   \||f_m-f|^{p(\cdot)}\|_{L^\frac{s(\cdot)}{p(\cdot)}(Q(0,j))}  \|k\|_{L^\frac{s(\cdot)}{s(\cdot)-p(\cdot)}(Q(0,j))}\\
&\leq  C\max\{\|f_m-f\|_{L^{s(\cdot)}(Q(0,j))}^{p^+};\|f_m-f\|_{L^{s(\cdot)}(Q(0,j))}^{p^-}\}
\end{align*}
which goes to $0$ as $m\to +\infty$  since $f_m\to f$ in $L^{s(\cdot)}(Q(0,j))$,
We can thus conclude that $f_m\to f$ in $L^{s(\cdot)}(\R^N)$.

\medskip

We now prove \eqref{comp}.
We fix a function $f\in D^{1,p(\cdot)}(\R^N)$ such that $\|\nabla f\|_{L^{p(\cdot)}(\R^N)}\leq1$,
and consider a partition of $\R^N$ by  cubes $Q_l$, $l\ge 1$, of unit side length satisfying \eqref{CondCube}.
Independently, given $\eta>0$ we  fix  $j\in\N$ such that
$$\|k\|_{L^1(\R^N-Q(0,j))}\leq\eta $$
and
$$ \|k\|_{L^\frac{s(\cdot)}{s(\cdot)-p(\cdot)}(Q_l)}<\eta  \qquad  \mbox{    for every $Q_l$ outside $Q(0,j)$.} $$
Denote $f_{Q_l}:=\int_{Q_l} f(x)\,dx$ the mean-value of $f$ on $Q_l$.
Then by Sobolev and Poincar\'e inequality there exists a constant $C$ independent of $f$ and $Q_l$ such that
$$ \|f-f_{Q_l}\|_{L^{s(\cdot)}(Q)}\leq C\|\nabla f\|_{L^{p(\cdot)}(Q_l)}  $$
and
$$ |f_{Q_l}|  \leq C \|f\|_{L^{p^*(\cdot)}(Q_l)}  \leq C \|f\|_{L^{p^*(\cdot)}(\R^N)}
                             \le C\|\nabla  f\|_{L^{p(\cdot)}(\R^N)} = C. $$
%$$ \|f\|_{L^{s(\cdot)}(Q_l)} \leq C \|f\|_{L^{p^*(\cdot)}(Q_l)}  \leq C \|f\|_{L^{p^*(\cdot)}(\R^N)} \le C\|\nabla  f\|_{L^{p(\cdot)}(\R^N)} = C $$
Let $p_l^+:=  \max_{Q_l}p$ and $p_l^-:=  \min_{Q_l}p$. It follows that for any cube $Q_l$ we have
\begin{eqnarray*}
\int_{Q_l}|f|^{p(\cdot)}k\,dx
& \le & 2^{p^+-1} \int_{Q_l}|f-f_{Q_l}|^{p(\cdot)}k\,dx + 2^{p^+-1} \int_{Q_l} |f_{Q_l}|^{p(\cdot)}k\,dx \\
&=& C\||f-f_{Q_l}|^{p(\cdot)}\|_{L^\frac{s(\cdot)}{p(\cdot)}(Q_l)}  \|k\|_{L^\frac{s(\cdot)}{s(\cdot)-p(\cdot)}(Q_l)}
+ C\int_{Q_l} k\,dx \\
&\leq & C\max\{\|\nabla f\|_{L^{p(\cdot)}(Q_l)}^{p_j^+};\|\nabla f\|_{L^{p(\cdot)}(Q_l)}^{p_j^-}\} \eta + C \int_{Q_l} k\,dx.
\end{eqnarray*}
Since $\|\nabla f\|_{L^{p(\cdot)}(\R^N)}\leq1$, we eventually obtain for any cube $Q_l$ that
$$ \int_{Q_l}|f|^{p(\cdot)}r\,dx  \le C\|\nabla f\|_{L^{p(\cdot)}(Q_l)}^{q_j^-} \eta + C \int_{Q_l} k\,dx. $$
Summing  these inequalities over all the cubes $Q_l$ outside $Q(0,j)$ using \eqref{ResultadoHasto} to handle the first term,
we obtain
$$ \int_{\R^N-Q(0,j)} |f|^{p(\cdot)}k\,dx \leq C\eta+ C\int_{\R^N-Q(0,j)} k\,dx \leq  C\eta $$
which is \eqref{comp}.
\end{proof}

\section*{Appendix B: test-functions computations at infinity. }

Let $U(x)=(1+|x|^\frac{p(\infty)}{p(\infty)-1})^{-\frac{n-p(\infty)}{p(\infty)}}$. It is well-known that $U$ is an extremal for 
$K(N,p(\infty))^{-1}$. 
Given  a cut-off function $\eta\in C^\infty_c([0,+\infty),[0,1])$ supported in $[0,2]$ such that $\eta\equiv 1$ in $[0,1]$, and points $x_\eps\in\R^n$ such that $|x_\eps|\to +\infty$, we consider the test function
\begin{equation}\label{DefTestFunction}
u_\eps(x)=\eps^{-\frac{n-p(\infty)}{p(\infty)}}U\Big(\frac{x-x_\eps}{\eps}\Big)\eta(|x-x_\eps|), \qquad \eps\ll 1.
\end{equation}
The points $x_\eps$ are chosen such that  $x_\eps:=r_\eps\nu$ where $\nu\in \R^N$, $|\nu|=1$, and $r_\eps$ satisfies 
\begin{eqnarray}\label{HypR}
\eps^{-\frac{p(\infty)-1}{4}},\eps^{-1/2}, \eps^{-\frac12\frac{n-p(\infty)}{p(\infty)-1}+\eta}\gg r_\eps \gg |\ln\eps|,
\end{eqnarray}
for some $\eta>0$. 

The result is the following:

\begin{prop}\label{PropAsymptq}
Consider $f\in C(\R^N)$. 
 Assume that $q$ and $f$ have a Taylor expansion at $\infty$ of order 2 and that $\nabla f(\infty)=\nabla q(\infty)=0$.
Then
\begin{eqnarray}\label{Asymptq}
 \int_{\R^N} f|u_\eps|^{q(x)}\,dx = A_0 + \frac{\ln\eps}{|x_\eps|^2}A_1 + \frac{1}{|x_\eps|^2}A_2 + o\Big(\frac{1}{|x_\eps|^2}\Big)
\end{eqnarray}
where, denoting $A(q,\nu):= \sum_{i,j=1}^n \p_{i,j}q(\infty) \nu_i\nu_j = (D^2q(\infty)\nu,\nu)$
and $A(f,\nu):= \sum_{i,j=1}^n \p_{i,j}f(\infty) \nu_i\nu_j$,
$$ A_0 = f(\infty)\int_{\R^N} U^{q(\infty)}(x)\,dx, \qquad
 A_1 = - \frac{n f(\infty) }{2q(\infty)} A(q,\nu) \int_{\R^N} U^{q(\infty)}(x)\,dx, $$
 $$ A_2 =\frac{f(\infty)}{2} A(q,\nu)  \int_{\R^N} U(x)^{q(\infty)}\ln\,U(x)\,dx + \frac12 A(f,\nu)  \int_{\R^N} U(x)^{q(\infty)}\,dx. $$
\end{prop}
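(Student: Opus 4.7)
The plan is to perform the change of variables $y=(x-x_\eps)/\eps$ and Taylor-expand every factor in the integrand at infinity up to the relevant order, before invoking dominated convergence to pass to the limit.

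Since $q(\infty)=p(\infty)^*$ yields $\frac{n-p(\infty)}{p(\infty)}q(\infty)=n$, after substitution the integral becomes
$$\int_{\R^N} f(x_\eps+\eps y)\,\eps^{-\frac{n-p(\infty)}{p(\infty)}\beta_\eps(y)}\,U(y)^{q(\infty)+\beta_\eps(y)}\,\eta(\eps|y|)^{q(x_\eps+\eps y)}\,dy,$$
where $\beta_\eps(y):=q(x_\eps+\eps y)-q(\infty)$. Since the support of the integrand forces $|\eps y|\le 2$, we have $|x_\eps+\eps y|\ge|x_\eps|-2\to +\infty$, and the direction $(x_\eps+\eps y)/|x_\eps+\eps y|$ differs from $\nu$ by $O(\eps|y|/|x_\eps|)$. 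Combined with the Taylor expansions at infinity of $q$ and $f$ (Definition \ref{DefiC2Infinity}) and the hypothesis $\nabla q(\infty)=\nabla f(\infty)=0$, this yields, uniformly for $|y|\le 2/\eps$,
$$\beta_\eps(y)=\frac{A(q,\nu)}{2|x_\eps|^2}+o\!\left(\frac{1}{|x_\eps|^2}\right),\qquad f(x_\eps+\eps y)=f(\infty)+\frac{A(f,\nu)}{2|x_\eps|^2}+o\!\left(\frac{1}{|x_\eps|^2}\right).$$

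Next, using the identity $(n-p(\infty))/p(\infty)=n/q(\infty)$, I would expand the remaining factors to second order,
$$\eps^{-\frac{n}{q(\infty)}\beta_\eps(y)}=1-\tfrac{n}{q(\infty)}\beta_\eps(y)\ln\eps+O\!\left((\beta_\eps(y)\ln\eps)^2\right),$$
$$U(y)^{q(\infty)+\beta_\eps(y)}=U(y)^{q(\infty)}\Bigl(1+\beta_\eps(y)\ln U(y)+O\bigl((\beta_\eps(y)\ln U(y))^2\bigr)\Bigr).$$
The hypothesis $r_\eps\gg |\ln\eps|$ guarantees $(\beta_\eps\ln\eps)^2=o(1/|x_\eps|^2)$, so the second-order terms can be absorbed into the error, while $\eta(\eps|y|)^{q(x_\eps+\eps y)}\to 1$ pointwise. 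Multiplying these expansions and collecting powers of $1/|x_\eps|^2$ and $\ln\eps/|x_\eps|^2$, the leading term reproduces $A_0$; the only contribution at order $\ln\eps/|x_\eps|^2$ is $-\tfrac{nf(\infty)}{2q(\infty)}A(q,\nu)\int U^{q(\infty)}=A_1$; and the order-$1/|x_\eps|^2$ term gathers $\tfrac12 A(f,\nu)\int U^{q(\infty)}$ from the $f$-expansion and $\tfrac{f(\infty)}{2}A(q,\nu)\int U^{q(\infty)}\ln U$ from the $U^{\beta_\eps}$-expansion, producing exactly $A_2$.

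The main obstacle will be justifying the interchange of limit and integral, \emph{i.e.}\ checking that all error terms produce only $o(1/|x_\eps|^2)$ after integration. I would handle this by splitting the domain: for $|y|\le M$ the Taylor remainders at infinity are genuinely uniform in $y$ and negligible against $U^{q(\infty)}$; for $|y|\in[M,2/\eps]$ the remainders are merely bounded, but are integrated against $U^{q(\infty)}$ and $U^{q(\infty)}|\ln U|$, which are integrable by the asymptotics $U(y)\sim|y|^{-(n-p(\infty))/(p(\infty)-1)}$, so that $U^{q(\infty)}(y)\sim|y|^{-np(\infty)/(p(\infty)-1)}$ decays faster than $|y|^{-n}$. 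Letting $M\to+\infty$ after $\eps\to 0$ makes the tail arbitrarily small, and dominated convergence then delivers the claimed expansion.
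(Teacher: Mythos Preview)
Your proposal is correct and follows essentially the same route as the paper: change variables, Taylor-expand $q$, $f$, $\eps^{-n\beta_\eps/q(\infty)}$ and $U^{\beta_\eps}$ uniformly on $|y|\le 2/\eps$, and use the integrability of $U^{q(\infty)}(1+|\ln U|)$ to control the tail. The only organizational difference is that the paper isolates the annulus $1/\eps\le|y|\le 2/\eps$ (where the cut-off $\eta$ is not identically~$1$) as a separate remainder $R_\eps$ and bounds it directly by $C\eps^{(N-p(\infty))/(p(\infty)-1)}=o(|x_\eps|^{-2})$ via \eqref{HypR}, whereas you absorb this into a single ``$M\to\infty$ after $\eps\to 0$'' tail argument; both are valid, though you should note that on the annulus the exponent is $q(x_\eps+\eps y)$ rather than $q(\infty)$, so the dominating function must be $U^{q(\infty)(1-\gamma)}$ for some small $\gamma>0$.
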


\begin{proof}
First
\begin{eqnarray}\label{Asympt1}
 \int_{\R^N} f|u_\eps|^{q(x)}\,dx =
\int_{B(1/\eps)} f(x_\eps+\eps x) \eps^{n-\frac{n}{q(\infty)}q(x_\eps+\eps x)} U(x)^{q(x_\eps+\eps x)}\,dx + R_\eps
\end{eqnarray}
with
$$ R_\eps
= \int_{\frac{1}{\eps}\le |x|\le \frac{2}{\eps}} f(x_\eps+\eps x) \eps^{n-\frac{n}{q(\infty)}q(x_\eps+\eps x)} U(x)^{q(x_\eps+\eps x)}
\eta(\eps x)^{q(x_\eps+\eps x)}\,dx. $$
Fix some positive $\gamma \le p(\infty)/(N(2p(\infty)-1))$. 
Recalling that  $\lim_{|x|\to +\infty}q(x)=q(\infty)$, $|x_\eps|\to +\infty$, and $|x_\eps+\eps x|\ge |x_\eps|-2$ for $|x|\le 2/\eps$,
we have, for $\eps$ small enough and any $|x|\le 2/\eps$, that $q(\infty)(1-\gamma)\le q(x_\eps+\eps x)\le q(\infty)(1+\gamma)$.
It follows that for $\eps$ small,
\begin{eqnarray*}
|R_\eps|
&\le & (|f(\infty)|+1) \eps^{-N\gamma} \int_{|x|\ge \frac{\delta}{\eps}} U(x)^{q(\infty)(1-\gamma)}
= \omega_{N-1}(|f(\infty)|+1) \eps^{-N\gamma}  \int_{\delta/\eps}^{+\infty} U(r)^{q(\infty)(1-\gamma)}  \\
&\le& C \eps^{-N\gamma + \frac{p(\infty)}{p(\infty)-1}N(1-\gamma)-N}
       = C \eps^{\frac{N}{p(\infty)-1} - \frac{N\gamma (2p(\infty)-1)}{p(\infty)-1}} 
			\le C \eps^{\frac{N-p(\infty)}{p(\infty)-1} }, 
\end{eqnarray*}
where $C$ is independent of $\gamma$. 
In view of \eqref{HypR}, we obtain that $R_\eps=o(|x_\eps|^{-2})$.

We now estimate the first term in the right hand side of
\eqref{Asympt1}. All the expansion will be uniform in $|x|\le
2/\eps$. First, for $|x|\le 1/\eps$, $|x_\eps+\eps
x|^2=|x_\eps|\Big(1+O\Big(\frac{1}{|x_\eps|}\Big)\Big)$ so that
$$ |x_\eps+\eps x|^{-4} = |x_\eps|^{-4}\Big(1+ O\Big(\frac{1}{|x_\eps|}\Big) \Big).  $$
In view of our assumptions we can write that
\begin{eqnarray*}
 q(x) = q(\infty) + \frac{1}{2}\sum_{i,j=1}^n \p_{i,j}q(\infty)\frac{x_ix_j}{|x|^4} + o\Big(\frac{1}{|x|^2}\Big),\qquad |x|\gg 1.
\end{eqnarray*}
We thus have uniformly in $|x|\le 1/\eps$ that
\begin{eqnarray}\label{Asymptq3}
 q(x_\eps+\eps x)
& = & q(\infty) +  \frac{1}{2}\p_{i,j}q(\infty)  (x_\eps^i+\eps x^i)(x_\eps^j+\eps x^j)|x_\eps|^{-4}\Big(1+ O\Big(\frac{1}{|x_\eps|}\Big) \Big)
+ o\Big(\frac{1}{|x|^2}\Big) \nonumber \\
& = & q(\infty) +  \frac{1}{2}\p_{i,j}q(\infty) \frac{x_\eps^i x_\eps^j}{|x_\eps|^4} + o\Big(\frac{1}{|x_\eps|^2}\Big). 
\end{eqnarray}
We can expand $f(x_\eps+\eps x)$ in the same way as
$$ f(x_\eps+\eps x)  =  f(\infty) +  \frac{1}{2}\p_{i,j}f(\infty) \frac{x_\eps^i x_\eps^j}{|x_\eps|^4} + o\Big(\frac{1}{|x_\eps|^2}\Big) $$
Moreover
\begin{eqnarray*}
\eps^{n-\frac{n}{q(\infty)}q(x_\eps+\eps x)}
&= & \exp\Big\{\Big( n-\frac{n}{q(\infty)}q(x_\eps+\eps x) \Big)\ln\eps\Big\}  \\
&=& 1 - \frac{n}{2q(\infty)} \p_{i,j}q(\infty)     \frac{x_\eps^i x_\eps^j}{|x_\eps|^2}\frac{\ln\,\eps}{|x_\eps|^2}
+ O\Big(\frac{(\ln\,\eps)^2}{|x_\eps|^4}\Big),
\end{eqnarray*}
and, recalling \eqref{HypR},
\begin{eqnarray*}
 U(x)^{q(x_\eps+\eps x)}
&=&U(x)^{q(\infty)}\exp\Big\{(q(x_\eps+\eps x)-q(\infty))\ln\,U(x) \Big\} \\
&=& U(x)^{q(\infty)}
\Big\{ 1 + \frac{1}{|x_\eps|^2}\frac{\ln\,U(x)}{2}\p_{i,j}q(\infty)  \frac{x_\eps^ix_\eps^j}{|x_\eps|^2} \Big)
 + o\Big(\frac{1}{|x_\eps|^2}\Big) \ln\,U(x).
\end{eqnarray*}
Noticing that $\lim_{\eps\to 0}\int_{\R^N\backslash B(1/\eps)} U(x)^{q(\infty)}(1+|\ln\,U(x)|)\,dx = 0$,
we deduce \eqref{Asymptq} by plugging these expansions in \eqref{Asympt1}.

\end{proof}

We now compute the asymptotic expansion of a term of the form $\int_{\R^N} g(x)|\nabla u_\eps|^{p(x)}\,dx$. 
We make the same hypothesis on $g$, $p$ than on $f$, $q$ before. The computations are very similar.

\begin{prop}\label{PropAsymptGrad}
  Assume that $p$ and $g$ have a Taylor expansion of order 2 at $\infty$ with $\nabla p(\infty)=\nabla g(\infty)=0$ and that $p(\infty)<\sqrt{N}$.
Then
\begin{eqnarray*}
 \int_{\R^N} g|\nabla u_\eps|^{p(x)}\,dx = B_0 + \frac{\ln\eps}{|x_\eps|^2}B_1 + \frac{1}{|x_\eps|^2}B_2 + o\Big(\frac{1}{|x_\eps|^2}\Big),
\end{eqnarray*}
where, denoting $A(p,\nu):= \sum_{i,j=1}^n \p_{i,j}p(\infty)$ and  $A(g,\nu):= \sum_{i,j=1}^n \p_{i,j}g(\infty)$,
$$ B_0 = g(\infty)\int_{\R^N} |\nabla U|^{p(\infty)}\,dx, \qquad  B_1 = - \frac{n g(\infty) }{2p(\infty)} A(p,\nu) \int_{\R^N} |\nabla U|^{p(\infty)}\,dx $$
$$  B_2 =\frac{g(\infty)}{2} A(p,\nu)  \int_{\R^N} |\nabla U|^{p(\infty)}\ln\,|\nabla U|\,dx + \frac12 A(g,\nu)  \int_{\R^N} |\nabla U|^{p(\infty)}\,dx. $$
\end{prop}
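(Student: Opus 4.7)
The proof is the gradient analogue of Proposition \ref{PropAsymptq} and proceeds along the same lines: change variables to localize around $x_\eps$, Taylor-expand all the $x$-dependent quantities to second order at infinity using $\nabla p(\infty)=\nabla g(\infty)=0$, and read off $B_0$, $B_1$, $B_2$ from the terms of order $1$, $\ln\eps\,|x_\eps|^{-2}$, and $|x_\eps|^{-2}$ respectively. The only genuinely new ingredient is the treatment of the cut-off annulus, where $\nabla\eta$ does not vanish.

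\emph{Scaling and splitting.} Setting $y=(x-x_\eps)/\eps$, on the core $\{|y|\le 1/\eps\}$ one has $\nabla u_\eps(x)=\eps^{-\frac{N-p(\infty)}{p(\infty)}-1}\nabla U(y)$ because $\eta\equiv 1$ there, while on the annulus $\{1/\eps\le|y|\le 2/\eps\}$ there is an additional $\nabla\eta$-contribution. Using the identity $-\bigl(\tfrac{N-p(\infty)}{p(\infty)}+1\bigr)p(x)+N=\tfrac{N}{p(\infty)}(p(\infty)-p(x))$, the core contribution becomes
\[
I_\eps^{\mathrm{core}}=\int_{|y|<1/\eps} g(x_\eps+\eps y)\,\eps^{\frac{N}{p(\infty)}(p(\infty)-p(x_\eps+\eps y))}\,|\nabla U(y)|^{p(x_\eps+\eps y)}\,dy.
\]
In the annulus both the $\nabla U$- and the $\nabla\eta$-pieces are pointwise of size $\eps^{(N-p(\infty))/(p(\infty)(p(\infty)-1))}$, so raising to the power $p(x)\approx p(\infty)$ and integrating against the $O(1)$ $x$-volume yields a remainder of order $\eps^{(N-p(\infty))/(p(\infty)-1)}$. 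The upper bound on $r_\eps$ in \eqref{HypR} precisely forces this to be $o(|x_\eps|^{-2})$.

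\emph{Taylor expansion on the core.} Since $\nabla p(\infty)=\nabla g(\infty)=0$ and $|\eps y|\le 1\ll|x_\eps|$, Definition \ref{DefiC2Infinity} together with the expansion $|x_\eps+\eps y|^{-4}=|x_\eps|^{-4}(1+O(\eps|y|/|x_\eps|))$ and the surviving diagonal term $x_\eps^i x_\eps^j/|x_\eps|^4=\nu_i\nu_j/|x_\eps|^2$ give, uniformly in $|y|\le 1/\eps$ and exactly as in \eqref{Asymptq3},
\begin{align*}
p(x_\eps+\eps y)&=p(\infty)+\frac{A(p,\nu)}{2|x_\eps|^2}+o(|x_\eps|^{-2}),\\
g(x_\eps+\eps y)&=g(\infty)+\frac{A(g,\nu)}{2|x_\eps|^2}+o(|x_\eps|^{-2}).
\end{align*}
Plugging these into $I_\eps^{\mathrm{core}}$ and further expanding
\[
\eps^{\frac{N}{p(\infty)}(p(\infty)-p(x_\eps+\eps y))}=1-\frac{N\,A(p,\nu)}{2p(\infty)}\frac{\ln\eps}{|x_\eps|^2}+o\!\left(\frac{\ln\eps}{|x_\eps|^2}\right),
\]
\[
|\nabla U(y)|^{p(x_\eps+\eps y)}=|\nabla U(y)|^{p(\infty)}\Bigl(1+\frac{A(p,\nu)}{2|x_\eps|^2}\ln|\nabla U(y)|\Bigr)+o\!\left(\frac{1}{|x_\eps|^2}\right),
\]
multiplying out and identifying the three orders produces exactly $B_0$, $B_1$, $B_2$.

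\emph{Main obstacle.} The delicate point, as in Proposition \ref{PropAsymptq}, is to justify the term-by-term integration of the above expansions uniformly on $\{|y|\le 1/\eps\}$, a region whose radius blows up. This requires dominated convergence against $|\nabla U(y)|^{p(\infty)}$, against $|\nabla U(y)|^{p(\infty)}|\ln|\nabla U(y)||$ (needed for $B_2$), and against the perturbed tails $|y|^\alpha|\nabla U(y)|^{p(\infty)(1\pm\gamma)}$ that appear in the remainder estimates for the $\eps$-power. Given the decay $|\nabla U(y)|\sim|y|^{-(N-1)/(p(\infty)-1)}$, the borderline Taylor-remainder integrals impose quadratic integrability conditions on $p(\infty)$ that are conveniently summarized by the hypothesis $p(\infty)<\sqrt{N}$, which is the clean sufficient condition ensuring simultaneously that all the limit and error integrals converge and that the annular remainder from Step 1 is $o(|x_\eps|^{-2})$ under \eqref{HypR}. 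Once this integrability is in hand, the proof is merely the bookkeeping of the three expansion factors.
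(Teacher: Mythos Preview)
Your proposal is correct and follows the paper's overall strategy: isolate the annular contribution coming from $\nabla\eta$, then reduce to the core integral and invoke the expansions of Proposition~\ref{PropAsymptq} with $(f,q,U)$ replaced by $(g,p,|\nabla U|)$.

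The one substantive difference is in how the annulus is handled. The paper does not use your direct pointwise bound; instead it applies the elementary inequality
\[
\big||a+b|^p-|a|^p\big|\le C\big(|b|^p+|a|^{p-1}|b|+|a||b|^{p-1}\big)
\]
to compare $|\nabla u_\eps|^{p(x)}$ with $\eta_\eps^{p(x)}|\nabla U_\eps|^{p(x)}$, producing three cross-term integrals of the type $\int U^{p}$, $\int |\nabla U|^{p-1}U$, $\int |\nabla U|\,U^{p-1}$ over $\R^N\setminus B(1/\eps)$. The finiteness of these integrals is exactly where the hypothesis $p(\infty)<\sqrt N$ enters in the paper. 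Your pointwise estimate on $\{1\le |x-x_\eps|\le 2\}$ is more elementary and yields the same $O(\eps^{(N-p(\infty))/(p(\infty)-1)})=o(|x_\eps|^{-2})$ without that constraint. In consequence, your final paragraph mislocates the role of $p(\infty)<\sqrt N$: the core Taylor-remainder integrals you describe converge already under $p(\infty)<N$ (the decay $|\nabla U(y)|\sim|y|^{-(N-1)/(p(\infty)-1)}$ makes $|\nabla U|^{p(\infty)}(1+|\ln|\nabla U||)$ integrable for any $p(\infty)<N$). In the paper the restriction comes solely from the cross-term step, and in your variant of the argument it is in fact not needed for this proposition at all.
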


\begin{proof}
Denoting $u_\eps = U_\eps\eta_\eps$, with $U_\eps(x)=\eps^{-\frac{n-p(\infty)}{p(\infty)}}U\Big(\frac{x-x_\eps}{\eps}\Big)$,
and $\eta_\eps(x):=\eta(|x-x_\eps|)$,  we  first write that
$$ \int_{\R^N} g(x)|\nabla u_\eps|^{p(x)}\,dx
= \int_{\R^N} g(x)\Big|\eta_\eps\nabla U_\eps + U_\eps \nabla \eta_\eps\Big|^{p(x)}\,dx
= \int_{\R^N} g(x)|\nabla U_\eps|^{p(x)}\eta_\eps^{p(x)}\,dx + R_\eps, $$
where, using the well-known inequality
$$ \Big| |a+b|^p-|a|^p\Big| \le C\Big(|b|^p+|a|^{p-1}|b|+|a||b|^{p-1}| \Big),\qquad a,b\in\R^N, $$
with $C>0$ uniform in $p$ for $p$ in a compact subset of $(1,+\infty)$, we can estimate the error term $R_\eps$ by
\begin{eqnarray*}
&& |R_\eps| \le  (|g(\infty)|+1)C \int_{B_{x_\eps}(2)\backslash B_{x_\eps}(1)} U_\eps^{p(x)} + |\nabla U_\eps|^{p(x)-1}U_\eps + U_\eps^{p(x)-1}|\nabla U_\eps|\,dx \\
&=&  C\int_{B(2/\eps)\backslash B(1/\eps)} \eps^{N\Big(1-\frac{p(x_\eps+\eps x)}{p(\infty)}\Big)} 
\Big\{ \eps^{p(x_\eps+\eps x)} U^{p(x_\eps+\eps x)}
 + \eps |\nabla U|^{p(x_\eps+\eps x)-1}U \\ 
 && \hspace{5.5cm} + \eps^{p(x_\eps+\eps x)-1} |\nabla U|U^{p(x_\eps+\eps x)-1} \Big\}\,dx. 
\end{eqnarray*}
In view of \eqref{Asymptq3} with $p$ in place of $q$, we have $1-\frac{p(x_\eps+\eps x)}{p(\infty)}=O\Big(\frac{1}{|x_\eps|^2}\Big)$ uniformly
for $|x|\le 2/\eps$, with $|x_\eps|\gg\ln\eps$.
It follows that $\eps^{N\Big(1-\frac{p(x_\eps+\eps x)}{p(\infty)}\Big)}\to 1$ uniformly for $|x|\le 2/\eps$.
We then deduce that for $\eps$ small
\begin{eqnarray*}
|R_\eps|
&\le &  C\int_{\R^N\backslash B(1/\eps)}   \eps^{p(x_\eps+\eps x)} U^{p(x_\eps+\eps x)} + \eps|\nabla U|^{p(x_\eps+\eps x)-1}U  +
\eps^{p(x_\eps+\eps x)-1} |\nabla U|U^{p(x_\eps+\eps x)-1} \,dx \\
&\le & C\eps^{\min\,\{1,\frac12 (p(\infty)-1) \}}
\int_{\R^N\backslash B(1/\eps)}  U^{p(x_\eps+\eps x)} + |\nabla U|^{p(x_\eps+\eps x)-1}U  + |\nabla U|U^{p(x_\eps+\eps x)-1} \,dx.
\end{eqnarray*}
Using the definition of $U$ and writing $p(x_\eps+\eps x)\ge p(\infty)(1-\gamma)$ as before, it is easily seen that these integrals are finite if
$p(\infty)<\min\{\sqrt{N},\frac{N+1}{2}\}=\sqrt{N}$.
We thus obtain in that case that $R_\eps=o(\eps^{\min\,\{1,\frac12 (p(\infty)-1) \}})= o(1/|x_\eps|^2)$ by \eqref{HypR}.

Coming back to the beginning of the proof we thus have
\begin{eqnarray*}
  \int_{\R^N} g(x)|\nabla u_\eps|^{p(x)}\,dx
    & = & \int_{\R^N} g(x)|\nabla U_\eps|^{p(x)}\eta_\eps^{p(x)}\,dx + o\Big(\frac{1}{|x_\eps|^2}\Big) \\
    & = & \int_{B_{x_\eps}(1)} g(x)|\nabla U_\eps|^{p(x)}\,dx + \tilde R_\eps + o\Big(\frac{1}{|x_\eps|^2}\Big),
\end{eqnarray*}
where,  writing $p(x_\eps+\eps x)\ge p(\infty)(1-\gamma)$ as before,
\begin{eqnarray*}
 \tilde R_\eps
& \le & \int_{B_{x_\eps}(2)\backslash B_{x_\eps}(1)} g(x)|\nabla U_\eps|^{p(x)}\,dx
= \int_{B(2/\eps)\backslash B(1/\eps)} g(x_\eps+\eps x) \eps^{N\Big(1-\frac{p(x_\eps+\eps x)}{p(\infty)}\Big)} |\nabla U|^{p(x_\eps+\eps x)}\,dx \\
&\le&  C \int_{\R^N\backslash B(1/\eps)} |\nabla U|^{p(\infty)(1-\gamma)}\,dx \\
& \le & C\eps^{\frac{N-p(\infty)}{p(\infty)-1}-\eta}
\end{eqnarray*}
where $\eta=\gamma p(\infty)(N-1)/(p(\infty)-1)$ and 
the constant $C$ can be chosen independent of $\gamma$. 
By \eqref{HypR}, we obtain $\tilde R_\eps  = o(1/|x_\eps|^2))$.  
We thus have
\begin{eqnarray*}
 \int_{\R^N} g(x)|\nabla u_\eps|^{p(x)}\,dx &=& \int_{B(1/\eps)} g(x)|\nabla U_\eps|^{p(x)}\,dx + o\Big(\frac{1}{|x_\eps|^2}\Big)\\
&=& \int_{B(1/\eps)} g(x_\eps+\eps x)\eps^{n-\frac{n}{p(\infty)}p(x_\eps+\eps x)} |\nabla U(x)|^{p(x_\eps+\eps x}\,dx + o\Big(\frac{1}{|x_\eps|^2}\Big).
\end{eqnarray*}
Notice that the integral in the right hand side is exactly the
integral in the right hand side of \eqref{Asympt1} with $g,p,|\nabla
U|$ in place of $f,q,U$. We thus have the same asymptotic expansion
as in prop. \ref{PropAsymptq}.
\end{proof}

We eventually compute the asymptotic expansion of an expression like $\int_{\R^N} h(x)|u_\eps(x)|^{p(x)}\,dx$:

\begin{prop}\label{PropAsymptp}
 Assume that the function $h:\R^N\to \R$ is bounded.  
%$p$ satisfies $|p(x_\eps+\eps x)-p(\infty)|\ln\eps=o(1)$ uniformly for $\eps |x|\le 2$ (this is the case if e.g. $p$ satisfies (H1)) with $p(\infty)<\sqrt{N}$.
Then
$$ \int_{\R^N} h(x)|u_\eps(x)|^{p(x)}\,dx = O(\eps^{p(\infty)}) = o(|x_\eps|^{-2}). $$
%with $C_0=h_\infty \int_{\R^N} U^{p(\infty)}\,dx$.
\end{prop}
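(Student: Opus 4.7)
The plan is to reduce the integral to a computable form via the substitution $y=(x-x_\eps)/\eps$, extract the dominant $\eps^{p(\infty)}$ factor, bound the remaining integrand uniformly in $\eps$, and finally compare $\eps^{p(\infty)}$ with $|x_\eps|^{-2}$ using \eqref{HypR}.

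After the substitution, the integral reads
\begin{equation*}
\int_{\R^N} h(x)|u_\eps(x)|^{p(x)}\,dx = \int_{|y|\le 2/\eps} h(x_\eps+\eps y)\, \eps^{n-\frac{(n-p(\infty))p(x_\eps+\eps y)}{p(\infty)}}\, U(y)^{p(x_\eps+\eps y)}\, \eta(\eps|y|)^{p(x_\eps+\eps y)}\,dy.
\end{equation*}
I would then write
\begin{equation*}
\eps^{n-\frac{(n-p(\infty))p(x_\eps+\eps y)}{p(\infty)}} = \eps^{p(\infty)}\,\exp\!\left(\frac{(n-p(\infty))(p(\infty)-p(x_\eps+\eps y))}{p(\infty)}\,\ln\eps\right),
\end{equation*}
and show the exponential factor is $1+o(1)$ uniformly in $|y|\le 2/\eps$. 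Since in the context where $u_\eps$ is used $p$ has a Taylor expansion at infinity with $\nabla p(\infty)=0$ (cf.~Definition \ref{DefiC2Infinity}), and since $|x_\eps+\eps y|\ge r_\eps-2$ on the relevant domain, one has $p(x_\eps+\eps y)-p(\infty)=O(r_\eps^{-2})$. Combined with $r_\eps\gg|\ln\eps|$ from \eqref{HypR} this gives $(p(x_\eps+\eps y)-p(\infty))|\ln\eps|=O(|\ln\eps|/r_\eps^2)=o(1)$.

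Next I would bound the remaining integral $\int h(x_\eps+\eps y)\,U(y)^{p(x_\eps+\eps y)}\,\eta(\eps|y|)^{p(x_\eps+\eps y)}\,dy$ uniformly in $\eps$. As $|h|\le\|h\|_\infty$, $\eta\le 1$ and $U\le 1$, it suffices to dominate $U(y)^{p(x_\eps+\eps y)}$ by an $\eps$-independent integrable function. Fixing any small $\gamma>0$, by (H2) we have $p(x_\eps+\eps y)\ge p(\infty)(1-\gamma)$ on the domain for $\eps$ small, so $U(y)^{p(x_\eps+\eps y)}\le U(y)^{p(\infty)(1-\gamma)}$. Recalling $U(y)\sim |y|^{-(n-p(\infty))/(p(\infty)-1)}$ at infinity, the majorant decays like $|y|^{-p(\infty)(n-p(\infty))(1-\gamma)/(p(\infty)-1)}$, whose exponent exceeds $n$ for $\gamma$ small enough precisely when $p(\infty)<\sqrt{n}$, the standing hypothesis of Theorem \ref{CSexistenciaInfinity}. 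Hence this integral is bounded, and the first equality $\int_{\R^N} h|u_\eps|^{p(x)}\,dx = O(\eps^{p(\infty)})$ follows.

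It remains to show $\eps^{p(\infty)}=o(|x_\eps|^{-2})$, i.e.~$\eps^{p(\infty)}|x_\eps|^{2}\to 0$. By \eqref{HypR}, $|x_\eps|=r_\eps\ll \eps^{-1/2}$, hence $\eps^{p(\infty)}|x_\eps|^2\ll \eps^{p(\infty)-1}$, which tends to $0$ since $p(\infty)>1$ by (H3). The only delicate step is controlling the auxiliary factor $\eps^{(n-p(\infty))(p(\infty)-p(x_\eps+\eps y))/p(\infty)}$: since $p(x_\eps+\eps y)-p(\infty)$ can a priori be positive (making the power negative and thus blowing up as $\eps\to 0$), one really does need the $O(r_\eps^{-2})$ decay coming from the Taylor expansion of $p$ rather than the weaker (H2) decay alone, together with the lower bound $r_\eps\gg|\ln\eps|$ from \eqref{HypR}. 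Everything else is dimension counting.
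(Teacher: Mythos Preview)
Your argument is correct and follows essentially the same route as the paper: substitute $y=(x-x_\eps)/\eps$, factor out $\eps^{p(\infty)}$, show the residual power of $\eps$ is $1+o(1)$, dominate $U(y)^{p(x_\eps+\eps y)}$ by the integrable $U(y)^{p(\infty)(1-\gamma)}$ (using $U\le 1$ and $p(\infty)<\sqrt{N}$), and finish with $r_\eps\ll\eps^{-1/2}$ from \eqref{HypR}. In fact you are more careful than the paper on one point: the paper justifies $\eps^{-\frac{N}{q(\infty)}(p(x_\eps+\eps y)-p(\infty))}=1+o(1)$ by citing (H1), whereas you correctly observe that (H1)--(H2) alone do not obviously control $(p(x_\eps+\eps y)-p(\infty))\ln\eps$ uniformly on $|y|\le 2/\eps$, and that the Taylor expansion of $p$ at infinity with $\nabla p(\infty)=0$ (giving $p(x_\eps+\eps y)-p(\infty)=O(r_\eps^{-2})$) combined with $r_\eps\gg|\ln\eps|$ is what actually makes this work---the same mechanism the paper invokes explicitly in the proof of Proposition~\ref{PropAsymptGrad} via \eqref{Asymptq3}.
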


\begin{proof} First notice that since $p$ satisfies (H1), 
$$ \eps^{N-\frac{N}{q(\infty)}p(x_\eps+\eps x)}
=\eps^{p(\infty)}\eps^{-\frac{N}{q(\infty)}(p(x_\eps+\eps x)-p(\infty))}
= \eps^{p(\infty)} (1+o(1)) $$
uniformly for $\eps |x|\le 2$.  
We can then write that
\begin{eqnarray}\label{Asympt3}
 \int_{\R^N} h(x)|u_\eps(x)|^{p(x)}\,dx
&=& \int_{B(2/\eps)} h(x_\eps+\eps x) \eps^{N-\frac{N-p(\infty)}{p(\infty)}p(x_\eps+\eps x)}U(x)^{p(x_\eps+\eps x)} \eta(\eps |x|)^{p(x_\eps+\eps x)}\,dx \nonumber \\
&\le & C\eps^{p(\infty)} (1+o(1)) \int_{B(2/\eps)}  U(x)^{p(x_\eps+\eps x)} \,dx 
\end{eqnarray}
with $ |U(x)^{p(x_\eps+\eps x)}1_{B(2/\eps)}| 
\le U(x)^{p(\infty)(1+\gamma)}1_{\{U>1\}} + U(x)^{p(\infty)(1-\gamma)}1_{\{U\le 1\}})$ which is
integrable for $\gamma>0$ small since we assumed $p(\infty)<\sqrt{N}$. We conclude the proof noticing that $\eps^{p(\infty)}=o(|x_\eps|^{-2})$ 
by \eqref{HypR}. 
\end{proof}

\section*{Acknowledgements}

This work was partially supported by Universidad de Buenos Aires under grant UBACYT 20020100100400 and by CONICET (Argentina) PIP 5478/1438.

\def\ocirc#1{\ifmmode\setbox0=\hbox{$#1$}\dimen0=\ht0 \advance\dimen0
  by1pt\rlap{\hbox to\wd0{\hss\raise\dimen0
  \hbox{\hskip.2em$\scriptscriptstyle\circ$}\hss}}#1\else {\accent"17 #1}\fi}
  \def\ocirc#1{\ifmmode\setbox0=\hbox{$#1$}\dimen0=\ht0 \advance\dimen0
  by1pt\rlap{\hbox to\wd0{\hss\raise\dimen0
  \hbox{\hskip.2em$\scriptscriptstyle\circ$}\hss}}#1\else {\accent"17 #1}\fi}
\providecommand{\bysame}{\leavevmode\hbox to3em{\hrulefill}\thinspace}
\providecommand{\MR}{\relax\ifhmode\unskip\space\fi MR }
% \MRhref is called by the amsart/book/proc definition of \MR.
\providecommand{\MRhref}[2]{%
  \href{http://www.ams.org/mathscinet-getitem?mr=#1}{#2}
}
\providecommand{\href}[2]{#2}

\bibliographystyle{plain}
\bibliography{biblio}

\end{document}